\newtheoremstyle{remarks}{3pt}{3pt}{}{}{\bfseries}{}{ }{}
\newtheorem{theorem}[equation]{\bf T{\footnotesize H\'EOR\`EME}}
\newtheorem{prop}[equation]{\bf P{\footnotesize ROPOSITION}}
\theoremstyle{definition}
\newtheorem{remark}[equation]{Remarque}
\newtheorem*{definition}{D\'efinition}
\theoremstyle{remarks}
\def\M{{\mathcal{M}}}
\def\C{{\mathcal{C}}}
\def\E{{\mathcal{E}}}
\DeclareMathAlphabet{\doba}{U}{msb}{m}{n}
\gdef\mR{\doba{R}}
\def\qed{{\leavevmode\unskip\nobreak\hfil\penalty 50\hskip 1em%
  \hbox{}\nobreak\hfil\lower 1pt\hbox{$\Box$\kern-.5pt}\parfillskip 0pt
  \finalhyphendemerits 0\par\bigbreak}}
\def\qedmath#1{\setbox0\hbox{$\displaystyle #1$}\templaenge=\textwidth\advance\templaenge by -\wd0%
\setbox1\hbox{$\Box$}\advance\templaenge by -2\wd1%
$$#1\hbox to0pt{\kern.5\templaenge$\Box$\kern-.5pt\hss}$$\par\bigbreak}
\def\al{{\alpha}}
\def\be{{\beta}}
\def\om{{\omega}}
\def\Om{{\Omega}}
\def\ep{{\varepsilon}}
\let\theta\vartheta
\def\phi{{\varphi}}
\def\nummerarray#1#2{\par\noindent\setbox0\hbox{\rm (#1)}\setbox1\hbox{$#2$}\unhcopy0%
\dimen0=.5\textwidth \advance\dimen0 by -\wd0 \advance\dimen0 by -.5\wd1 \kern\dimen0 \unhcopy1}
\def\ker{\mathop{{\rm ker}}}
\def\div{\mathop{{\rm div}}}
\def\eref#1{{\rm (\ref{#1})}} 
\def\res#1#2{{#1}\lower .11ex\hbox{$|$}\lower .644ex\hbox{$\scriptstyle #2$}}
\def\M{\mathcal{M}}
\begin{document}

\title[]{Relativit\'e g\'en\'erale (d'apr\`es M.Vaugon) et quelques probl\`emes math\'ematiques qui en sont issus.}
\author{Emmanuel Humbert\\
{\large Institut \'Elie Cartan de Nancy} \\
{\large Universit\'e H. Poincar\'e de Nancy I}\\
{\large BP 70239} \\
{\large 54506 Vandoeuvre-L\`es-Nancy Cedex}\\
{\large ehumbert@iecn.u-nancy.fr}}

\maketitle

Depuis deux ans, mon ancien directeur de th\`ese et ami Michel Vaugon a
entrepris de comprendre les intuitions physiques qui ont conduit
\`a l'axiomatique de la relativit\'e g\'en\'erale tout en gardant  un
langage de math\'ematicien et plus pr\'ecis\'ement de g\'eom\`etre. 
Ses notes, manuscrites, sont \`a mon sens d'une clart\'e remarquable et je
le remercie chaleureusement de me les avoir fournies. Cela
a \'et\'e l'occasion pour moi de comprendre ces notions, qui m'\'etaient
\'etrang\`eres bien que mes travaux de recherche aient des liens importants
avec le relativit\'e g\'en\'erale. J'ai commenc\'e \`a \'ecrire ce texte en suivant ses notes, non pas pour am\'eliorer son travail, mais pour m'approprier ces notions et les traduire dans mon propre langage. D'ailleurs, des cinq premiers chapitres, 
je ne
peux revendiquer qu'une part infime de la forme et quelques
remarques. M\^eme si la suite est plus personnelle, je tenais \`a ce que son nom apparaisse dans le titre parce que, sans lui, ce texte n'aurait pas vu le jour.
La version ici pr\'esente est tr\`es raccourcie. Elle sera en effet publi\'ee dans son int\'egralit\'e  chez Ellipses.  \\

\tableofcontents

%\chapter*{PARTIE I}
%\begin{center}
% {\large {\bf  Les bases de la relativit\'e g\'en\'erale}}
% \end{center}

%\bigskip

%\begin{center} Dans cette premi\`ere partie, nous essayerons de comprendre les intuitions physiques qui ont conduit \`a l'axiomatique de la relativit\'e g\'en\'erale et en particulier \`a l'\'equation d'Einstein. Une fois ces bases pos\'ees, nous verrons comment elles conduisent naturellement \`a la th\'eorie du Big-Bang et comment elles permettent de pr\'evoir l'existence de trous noirs.
%\end{center}
 
%%%%%%%%%%%%%%%%%%%%%%%%%%%%%%%%%%%%%%%%%%%%%%%%%%%%%%%%%%%%%%%%%%%%%
\chapter{Quelques exp\'eriences fictives} \label{raisonnements}
%%%%%%%%%%%%%%%%%%%%%%%%%%%%%%%%%%%%%%%%%%%%%%%%%%%%%%%%%%%%%%%%%%%%%%%
Ce chapitre permet de montrer comment \`a partir d'observations simples, on
 peut rapidement faire du calcul relativiste et m\^eme aboutir \`a la
 c\'el\`ebre formule $E= mc^2$. Son but est uniquement culturel et ne servira pas dans la suite du texte. Le lecteur press\'e peut donc commencer la lecture directement au chapitre suivant.  \\

\noindent La th\'eorie de la relativit\'e est n\'ee d'une observation qui va \`a l'encontre de toutes nos intuitions : la vitesse de la lumi\`ere est la m\^eme par rapport \`a
n'importe quel observateur. C'est un fait observ\'e en 1887 par Michelson et Morley et qui se retrouve par le calcul (voir le Paragraphe \ref{equation_des_ondes} du Chapitre \ref{ondesg}).  Imaginons par exemple qu'un photon passe devant un observateur \`a $c$ km/h. Imaginons qu'un deuxi\`eme observateur aille exactement dans la m\^eme direction que le photon mais \`a $(c-1)$ km/h. Pour le sens commun, si ce deuxi\`eme observateur mesure la vitesse du photon, il doit trouver $1$ km/h. Or, exp\'erimentalement, il est d\'emontr\'e que cet observateur va trouver lui aussi une vitesse de $c$ km/h. En particulier, les lois habituelles de la m\'ecanique classique ne peuvent pas \^etre vraies, d'o\`u la n\'ecessit\'e de trouver un mod\`ele de l'espace-temps qui prenne en compte ce ph\'enom\`ene tout en gardant ``approximativement'' (c'est-\`a-dire pour tout ce qui se passe \`a l'\'echelle humaine) les lois de la m\'ecanique classique. \\

\noindent Dans ce premier chapitre, on explique comment, avant m\^eme de chercher un bon mod\`ele, on peut d\'eduire de cette observation plusieurs conclusions int\'eressantes gr\^ace \`a des raisonnements simples. \\

\section{Temps et longueurs}

\noindent Prenons un observateur $A$ qui se trouve dans un train qui avance \`a $v$ m\`etres/seconde par rapport au quai et dont les wagons ont une longueur de $l$ m\`etres. Prenons aussi un observateur $B$ qui regarde passer le train depuis le quai.
Maintenant, supposons qu'un photon parte de l'arri\`ere du wagon et qu'il
parcoure  ce wagon en un temps de $t$ secondes. Pour l'observateur $A$, le
photon a parcouru $l$ m\`etres en $t$ secondes soit une vitesse de $l/ t$
m\`etres/seconde.  
Maintenant, pour $B$ le photon a parcouru en $t$ secondes $l$ m\`etres plus la distance parcourue par le train en $t$ secondes soit 
$d= l + v t$. Ainsi pour l'observateur $B$, le photon va \`a  $(l+vt)/t= l/t + v$
m\`etres/seconde. Puisqu'on trouve des vitesses diff\'erentes pour les deux observateurs alors que l'exp\'erience dit au contraire qu'on doit trouver les m\^emes, c'est qu'il y a une erreur dans le raisonnement. 
En fait, on a consid\'er\'e que 
\begin{enumerate}
 \item le temps mesur\'e par $A$ et $B$ pour que le photon parcoure le wagon \'etaient les m\^emes (\'egaux \`a $t$).
\item La longueur du wagon mesur\'ee par $A$ et $B$ \'etait la m\^eme. 
\end{enumerate}

\noindent Pour arriver \`a un mod\`ele fid\`ele \`a la r\'ealit\'e, il faut
donc remettre en cause ces deux principes. Bien s\^ur, ces diff\'erences ne
se feront sentir qu'\`a des vitesses \'elev\'ees. Un observateur humain qui
observe ce qui se passe autour de lui ne se rendra pas compte de ces
diff\'erences de mesure.\\

\noindent Donc, pour la suite, on supposera que le temps entre deux
\'ev\'enements ou la longueur d'un objet d\'epend de l'observateur qui le
mesure. Se pose aussi le probl\`eme de la simultan\'eit\'e entre deux
\'ev\'enements qui 
d\'ependra elle aussi de  l'observateur. \\

\noindent {\bf Grandeurs conserv\'ees quel que soit l'observateur.} 
Comme on l'a expliqu\'e, on doit remettre en cause les notions de temps et de longueur mais on ne doit pas le faire n'importe comment. Par exemple, une longueur mesur\'ee perpendiculairement au d\'eplacement ne doit pas d\'ependre de l'observateur. En effet, supposons par exemple que ces longueurs se contractent quand la vitesse augmente (un raisonnement analogue se fait si on suppose que les distances  s'allongent) et reprenons le cas du train sur lequel se trouve l'observateur $A$ alors que l'observateur $B$ est rest\'e sur le quai. 

\begin{itemize} 
 \item Pla\c{c}ons nous  d'abord du point de vue de l'observateur $A$. Pour lui le train est immobile alors que les rails ont une vitesse $v$ non nulle. Donc la largeur des rails doit \^etre plus petite que l'\'ecartement des roues du train. Autrement dit, les roues du train laissent des traces \`a l'ext\'erieur des rails. 

\item Pour l'observateur $B$, c'est le train qui avance et donc l'\'ecartement de ses roues doit \^etre plus petit que l'\'ecartement des rails : les traces des roues du train doivent \^etre \`a l'int\'erieur des rails. 
\end{itemize}

\noindent Comme les traces laiss\'ees par le train ne peuvent pas \^etre \`a la fois \`a l'ext\'erieur et \`a l'int\'erieur des rails, c'est que l'\'ecartement des rails (ou des roues du train) doit \^etre le m\^eme pour les deux observateurs $A$ et $B$. \\

\noindent On pourrait imaginer une deuxi\`eme exp\'erience pour prouver que les distances mesur\'ees dans le sens du d\'eplacement ne d\'ependent pas non plus de l'observateur (mais comme on va le voir, ce raisonnement est faux) : les m\^emes observateurs $A$ et $B$ ont chacun une r\`egle gradu\'ee dans les mains. Au moment o\`u ils se croisent, l'observateur $A$ colle sa r\`egle sur celle de l'observateur $B$  plac\'ee dans le sens du d\'eplacement par exemple dans un bac \`a sable pos\'e sur le quai. 
Par le m\^eme genre de raisonnement que ci-dessus, on peut se dire qu'on arrive \`a une absurdit\'e (pour $B$ la marque laiss\'ee par sa r\`egle dans le sable doit \^etre plus grande que celle de $A$ et inversement). 
Il y a un probl\`eme dans cet argument : pour laisser une marque, l'observateur $A$ doit poser sa r\`egle dans le sable pendant un intervalle de temps certes tr\`es court mais non nul. Or pour $B$, cet intervalle de temps n'est pas le m\^eme, il est plus long.  En r\'esum\'e, dans cette exp\'erience, la marque laiss\'ee par la r\`egle de $A$ sera plus longue que celle laiss\'ee par la r\`egle $B$ mais pour deux raisons diff\'erentes: du point de vue de $A$, parce que sa r\`egle est plus longue et du point du vue de $B$ parce que $A$ a laiss\'e sa r\`egle un certain temps dans le sable.\\

\noindent {\bf Temps et distance pour deux observateurs.} 
On va maintenant imaginer deux exp\'eriences qui permettent de pr\'eciser les diff\'erences de mesure de temps et de distance dans le sens du d\'eplacement pour deux observateurs. 

\begin{enumerate}
 \item Reprenons toujours nos deux observateurs $A$ (dans le train) et $B$ (sur le quai). Notons $v$ la vitesse du train par rapport au quai. Imaginons qu'un photon fasse un aller-retour (plancher du wagon)-(plafond du wagon). Notons $h$ la hauteur du plafond par rapport au plancher du train ($h$ est la m\^eme pour $A$ et $B$ puisque c'est une distance qui est mesur\'ee perpendiculairement au d\'eplacement). 
 
\begin{itemize}
 \item $A$ mesure le temps $t_A$ pour cet aller-retour du photon. Pour lui le photon a parcouru la distance de $2h$. Donc la vitesse du photon est $c_1= \frac{2h}{t_A}$. 
\item $B$ mesure le temps $t_B$ pour le m\^eme trajet. Mais de son point de vue, le photon n'a pas un parcours vertical puisque le train avance. Plus pr\'ecisement en hauteur il a parcouru $2h$ et horizontalement $v t_B$. D'apr\`es Pythagore, il a parcouru $\sqrt{4h^2 + v^2 t_B^2}$ ce qui donne pour le photon une vitesse 
$c_2= \frac{\sqrt{4h^2 + v^2 t_B^2}}{t_B}$. 
\end{itemize}

%\begin{figure}[h]
%\centerline{
%\epsfig{file=train1.ps, scale=0.5}
%}
%\caption{Calcul du temps mesur\'e par deux observateurs}
%\label{fig:fig1}
%\end{figure}

\noindent Maintenant, comme la vitesse de la lumi\`ere est constante par rapport \`a n'importe quel observateur, on a $c_1=c_2=c$
 et on trouve que  
\begin{eqnarray*}
 \begin{aligned}
t_A& = \frac{2h}{c} \\
& = \frac{2h}{\sqrt{4h^2 + v^2 t_B^2}} t_B \\
& = \sqrt{ 1 - \frac{v^2 t_B^2}{v^2 t_B^2+4h^2} } t_B.
 \end{aligned}
\end{eqnarray*}
Comme $c =  \frac{\sqrt{4h^2 + v^2 t_B^2}}{t_B}$, on trouve que 
$$t_A= \sqrt{ 1 - \frac{v^2}{c^2}} t_B.$$

\noindent On admet donc la r\`egle suivante :\\

\noindent {\it \underline{R\`egle 1 :} Soient deux observateurs $A$ et $B$ qui se d\'eplacent \`a vitesse constante $v$ l'un par rapport \`a l'autre. On consid\`ere deux \'ev\'enements qui se passent {\bf au m\^eme endroit} pour  $A$. Alors si on note respectivement $t_A$ et $t_B$ les temps mesur\'es  entre ces deux \'ev\'enements par $A$ et $B$, on a 
$t_A= \sqrt{ 1 - \frac{v^2}{c^2}} \; t_B.$} \\

\begin{remark}
 Il est tr\`es important de noter que ces \'ev\'enements doivent se passer au m\^eme endroit pour l'un des observateurs (d'o\`u la n\'ecessit\'e de consid\'erer un aller-retour du photon). Sans cela, la r\`egle dirait aussi que $t_B= \sqrt{ 1 - \frac{v^2}{c^2}} \; t_A$, ce qui est faux bien s\^ur. Le point 1 de la Remarque \ref{erreur} ci-dessous illustre aussi la n\'ecessit\'e de consid\'erer de tels \'ev\'enements. 
\end{remark}

\noindent Avec le m\^eme raisonnement, on peut aussi en d\'eduire une r\`egle avec des hypoth\`eses un peu plus g\'en\'erales, qui nous servirons pour la suite  : \\

\noindent {\it \underline{R\`egle 1' :} Soient deux observateurs $A$ et $B$ qui se d\'eplacent \`a vitesse constante $v$ l'un par rapport \`a l'autre. On consid\`ere deux \'ev\'enements qui se passent \`a deux  endroits $X$ et $Y$ avec $(XY)$ perpendiculaire au mouvement pour  $A$ ({\bf attention, cette notion d\'epend de l'observateur}). Alors si on note respectivement $t_A$ et $t_B$ les temps mesur\'es  entre ces deux \'ev\'enements par $A$ et $B$, on a 
$t_A= \sqrt{ 1 - \frac{v^2}{c^2}} \; t_B.$} \\

\item Maintenant, consid\'erons que le photon fait un aller-retour (arri\`ere du wagon)-(avant du wagon). Cette fois, la longueur d\'epend de l'observateur. Notons $l_A$ (resp. $l_B$)  la longueur du wagon mesur\'ee par $A$ (resp. par $B$) et conservons $v$ pour sa vitesse. 

\begin{itemize}
 \item L'observateur $A$ mesure un temps $t_A$ pour cet aller-retour. La distance parcourue par le photon pendant ce temps est $2l$. Donc sa vitesse est 
$c= \frac{2l_A}{t_A}$. 
\item Pour $B$, s\'eparons le trajet aller du trajet retour. Notons  $t_B'$
  (resp. $t_B''$) le temps mesur\'e par $B$ pour  l'aller (resp. le
  retour). Pour $B$, la distance parcourue par le photon sur l'aller est
  $l_B+v t_B'$  (longueur du wagon plus distance parcourue par le wagon
  pendant le trajet aller). Pour le retour la distance est $l_B - vt_B''$.

%
%\begin{figure}[h]
%\centerline{
%\epsfig{file=train2.ps, scale=0.5}
%}
%\input{train2.pstex_t}
%\caption{Calcul des longueurs dans le sens du mouvements mesur\'ees par
%  deux observateurs} 
%\end{figure}
\noindent On a donc 
\begin{eqnarray*} 
 %\label{cb}
 c= \frac{l_B+ v t_B'}{t_{B'}} = \frac{l_B- v t_B''}{t_{B''}}.
\end{eqnarray*}
\end{itemize}
De cette \'equation, on tire 
$t_B' = \frac{l_B+ v t_B'}{c}$ c'est-\`a-dire $t_B'= \frac{l_B}{c-v}$. De m\^eme, 
$t_B''= \frac{l_B}{c+v}$. 
D'apr\`es la R\`egle 1 ci-dessus, on a 
\begin{eqnarray*}
 \begin{aligned}
 t_A & = \sqrt{1-\frac{v^2}{c^2} } (t_B'+t_B'') \\
& = \sqrt{1-\frac{v^2}{c^2} } \left(\frac{1 }{c-v} + \frac{1}{c+v} \right)l_B \\
& =  \frac{2}{\sqrt{c^2-v^2} } \; l_B.
 \end{aligned}
\end{eqnarray*}
Ainsi, 
$$l_A= \frac{c t_A}{2} = \frac{1}{\sqrt{1- \frac{v^2}{c^2}}} \; l_B.$$
On en d\'eduit la r\`egle suivante : \\

\noindent {\it \underline{R\`egle 2 :} Soient deux observateurs $A$ et $B$ qui se d\'eplacent l'un par rapport \`a l'autre \`a vitesse constante et soit $[PQ]$ un segment {\bf fixe pour $A$} et parall\`ele au mouvement. Alors, les distances $l_A$ et $l_B$ entre $P$ et $Q$ mesur\'ees respectivement par $A$ et $B$ sont li\'ees par 
$l_A = \frac{1}{\sqrt{1- \frac{v^2}{c^2}}} \;l_B.$}

\end{enumerate}

\begin{remark} \label{erreur} 

\noindent \\

\begin{enumerate}
 \item Encore une fois, pour appliquer la r\`egle qui lie les temps mesur\'es par deux observateurs, il faut bien v\'erifier que ces \'ev\'enements  se passent au m\^eme endroit pour l'un des observateurs. En effet, dans cette exp\'erience, les temps des trajets aller et retour mesur\'es par $A$ sont tous les deux de $t_A /2$. En appliquant la r\`egle de comparaison de temps, on a envie de dire que $t_A / 2= \sqrt{1-\frac{v^2}{c^2} } \; t_B'$ et $t_A / 2= \sqrt{1-\frac{v^2}{c^2} } \;t_B''$. Cela conduit \`a $t_B'= t_B''$, ce qui est faux (sinon, la vitesse du photon mesur\'e par $B$ n'est pas la m\^eme sur l'aller et le retour).  De m\^eme, dans la r\`egle de comparaison des longueurs, il est important que le segment $[PQ]$ mesur\'e soit fixe par rapport \`a l'un des observateurs.  
\item De ces raisonnements, on peut facilement d\'eduire des r\`egles de comparaison de temps lorsque deux \'ev\'enements ne se passent pas au m\^eme endroit ou de comparaison de longueur pour un segment quelconque. Ces r\`egles sont appel\'ees {\it transformations de Lorentz}.
\end{enumerate}
\end{remark}

%%%%%%%%%%%%%%%%%%%%%%%%%%%%%%%%%%%%%%%%%%%%%%%%%%%%%%%%%%%%%%%%%%%%%%%%%%%%%%%%%%%%%%%%%%%%
\section{Masse et impulsion} 
En m\'ecanique classique, consid\'erons un objet $q$ de masse $m$ anim\'e dans un rep\`ere (c'est-\`a-dire pour un observateur galil\'een donn\'e) d'une vitesse $\overrightarrow{v}$. 

\begin{definition}
Le vecteur $\overrightarrow{p} := m \overrightarrow{v}$ est appel\'e {\it quantit\'e de mouvement } ou {\it impulsion} de l'objet $q$. 
\end{definition} 

\noindent Soit maintenant un syst\`eme compos\'e de $n$ objets $q_1,\cdots,q_n$ de quantit\'es de mouvement respectives $\overrightarrow{p}_1, \cdots, \overrightarrow{p}_n$. Alors, une loi fondamentale de la dynamique en m\'ecanique classique est \\

\noindent {\bf Loi de conservation de quantit\'e de mouvement :} {\it la quantit\'e de mouvement du syst\`eme d\'efini par $\overrightarrow{p} = \overrightarrow{p}_1 + \cdots + \overrightarrow{p}_n$ est constante avec le temps. En particulier, elle est conserv\'ee lors des chocs entre les objets du syst\`eme. }

\noindent On va supposer que cette loi, bas\'ee sur l'exp\'erience, est toujours valide en relativit\'e. Mais pour cela, il va falloir pr\'eciser les choses, puisque les notions de temps et de distance sont remises en cause. Dans ce but, nous reprenons nos deux observateurs $A$ (dans le train) et $B$ (sur le quai). Chacun va lancer une pierre sph\'erique ($q_A$ pour $A$ et $q_B$ pour $B$) de masse $m$ perpendiculairement au mouvement et avec une vitesse $V$ de mani\`ere \`a ce que les deux pierres se rencontrent. Pour l'observateur $B$ on travaille  dans un rep\`ere $(Bxy)$ o\`u l'axe $(Bx)$ est parall\`ele aux rails et $(By)$ est orthogonal aux rails. Pour l'observateur $A$, on travaille dans le rep\`ere $(Axy)$ dont seule l'origine est diff\'erente : on prend $A$ \`a la place de $B$. Autrement dit, le rep\`ere $(Axy)$ se d\'eplace \`a la vitesse constante $v$ par rapport au rep\`ere $(Bxy)$. \\

\noindent {\bf Avant le choc}, 

\begin{itemize} 
\item L'observateur $A$ voit la pierre $q_A$ anim\'ee de la  vitesse (exprim\'ee dans $(Axy)$)
\[ \overrightarrow{v}^A_A = \left( 
\begin{array}{c} 
0 \\ 
V 
\end{array} \right) \] 
tandis que l'observateur $B$ voit la m\^eme pierre anim\'ee de la vitesse (exprim\'ee dans $(Bxy)$) 
\[ \overrightarrow{v}^B_A = \left( \begin{array}{c} v \\ V'  \end{array} \right) \] 
o\`u $v$ est la vitesse du train et $V'$ est \`a calculer. 
\item Par sym\'etrie de la situation, l'observateur $A$ voit la pierre $q_B$ anim\'ee de la vitesse
\[ \overrightarrow{v}^A_B = \left( \begin{array}{c} v \\ -V' \end{array} \right) \]  
tandis que l'observateur $B$ voit la m\^eme pierre anim\'ee de la vitesse 
\[ \overrightarrow{v}^B_B = \left( \begin{array}{c} 0 \\ -V \end{array} \right) \] 
(on a bien s\^ur arbitrairement choisi une orientation des axes).
\end{itemize}

%\begin{figure}[h] 
%\centerline{
%\epsfig{file=quant.ps, scale=0.5}
%}
%\input{quant.pstex_t}
%\caption{Quantit\'e de mouvement relativiste}
%\label{quant}
%\end{figure}

\noindent {\bf Calcul de $V'$ :} $V'$ est la composante perpendiculaire au train du vecteur vitesse de la pierre $q_A$ vue par $B$. Regardons le temps que met la pierre $q_A$ pour parcourir une distance $l$ sur l'axe perpendiculaire aux rails (l'axe de $y$. On oublie les composantes dans l'autre direction). D'apr\`es le paragraphe pr\'ec\'edent, cette distance ne d\'epend pas de l'observateur. Notons $t_A$ (resp. $t_B$) le temps mesur\'e par $A$ (resp. par $B$) pour que la pierre parcoure cette distance $l$. D'apr\`es la R\`egle 1' du paragraphe pr\'ec\'edent, on a $t_A= \sqrt{ 1 - \frac{v^2}{c^2}} \; t_B$. D'autre part, on a $V = \frac{l}{t_A}$ et $V'= \frac{l}{t_B}$. On obtient ainsi que 
\begin{eqnarray} \label{V'=}
V'= V   \sqrt{ 1 - \frac{v^2}{c^2}}.
\end{eqnarray}  

\noindent {\bf Apr\`es le choc :} on consid\`ere bien s\^ur que le choc est \'elastique. On remarque que les composantes des vitesses selon l'axe des $x$ n'est pas modifi\'ee. Regardons maintenant les composantes des vitesses sur l'axe des $y$. Par sym\'etrie de la situation, $A$ et $B$ doivent voir revenir leur propre pierre \`a la m\^eme vitesse. Si cette vitesse est diff\'erente de celle avant le choc (par exemple strictement sup\'erieure \`a $V$) alors le syst\`eme a gagn\'e de l'\'energie ce qui va \`a l'encontre des lois physiques. Donc $A$ et $B$ doivent voir revenir leur pierre avec la vitesse $V$. Autrement dit,  

\begin{itemize} 
\item L'observateur $A$ voit la pierre $q_A$ anim\'ee de la vitesse 
\[ \overrightarrow{w}^A_A = \left( \begin{array}{c} 0 \\ -V \end{array} \right) \] 
tandis que l'observateur $B$ voit la m\^eme pierre anim\'ee de la vitesse 
\[ \overrightarrow{w}^B_A = \left( \begin{array}{c} v \\ -V'  \end{array} \right), \]
\item l'observateur $A$ voit la pierre $q_B$ anim\'ee de la vitesse
\[ \overrightarrow{w}^A_B = \left( \begin{array}{c} v \\ V' \end{array} \right) \]  
tandis que l'observateur $B$ voit la m\^eme pierre anim\'ee de la vitesse 
\[ \overrightarrow{w}^B_B = \left( \begin{array}{c} 0 \\ V \end{array} \right). \] 
\end{itemize}

\noindent Pour l'observateur $A$, v\'erifions si la loi de conservation de quantit\'e de mouvement de la m\'ecanique classique est toujours valable pour le syst\`eme $(q_A,q_B)$. On doit avoir 
$$m   \overrightarrow{v}^A_A + m \overrightarrow{v}^A_B = m\overrightarrow{w}^A_A + m\overrightarrow{w}^A_B.$$
Or ce n'est pas le cas puisque la deuxi\`eme coordonn\'ee donne $m(V-V')= m(V'-V)$. D'o\`u vient l'erreur de raisonnement ? La r\'eponse est simple : on a consid\'er\'e que les masses des pierres $q_A$ et $q_B$ \'etaient les m\^emes du point de vue des observateurs $A$ et $B$. Or il semble raisonnable de remettre en cause ce principe. En effet, supposons qu'un objet conserve la m\^eme masse quelle que soit sa vitesse. Il suffira d'une quantit\'e finie d'\'energie pour acc\'el\'erer la particule \`a n'importe quelle vitesse choisie, y compris \`a une vitesse plus grande que celle de la lumi\`ere. Or, d'apr\`es les formules trouv\'ees au paragraphe pr\'ec\'edent, la vitesse de la lumi\`ere est une barri\`ere infranchissable. L'un des moyens d'expliquer ce fait est de supposer que la masse d'un objet tend vers l'infini quand sa vitesse tend vers celle de la lumi\`ere. \\

\noindent Revenons \`a notre probl\`eme. En m\'ecanique relativiste, on va toujours supposer que la quantit\'e de mouvement est le produit de la masse par la vitesse mais on va aussi supposer que la masse d'un objet vu par un observateur d\'epend de sa vitesse par rapport \`a cet obervateur. 
Notons $\overrightarrow{V}$ ce vecteur vitesse.  On va dire que sa masse est une fonction de $\overrightarrow{V}$ not\'ee 
$m_{\overrightarrow{V}}$. On va chercher cette fonction de mani\`ere \`a ce que 
la quantit\'e de mouvement soit conserv\'ee apr\`es le choc c'est-\`a-dire de mani\`ere \`a avoir l'\'egalit\'e 
\begin{eqnarray} \label{qm}
m_{\overrightarrow{v}^A_A}   \overrightarrow{v}^A_A +  m_{\overrightarrow{v}^A_B}  \overrightarrow{v}^A_B = m_{\overrightarrow{w}^A_A} \overrightarrow{w}^A_A + m_{\overrightarrow{w}^A_B} \overrightarrow{w}^A_B. 
\end{eqnarray}
On remarque que cette \'egalit\'e est v\'erifi\'ee si pour une vitesse $\overrightarrow{V}$ et un objet de masse $m$ (pour donner un sens \`a la masse d'un objet, il faut comprendre ce terme comme \'etant sa masse au repos, c'est-\`a-dire \`a vitesse nulle), on a 
$$m_{\overrightarrow{V}}= \frac{m}{\sqrt{1 - \frac{\| \overrightarrow{V} \|}{c^2}}}  $$
(o\`u $\| \overrightarrow{V} \|$ est la norme euclidienne de $\overrightarrow{V}$).
En effet, l'\'egalit\'e de la premi\`ere coordonn\'ee est clairement v\'erifi\'ee. Par ailleurs, 
pour la deuxi\`eme coordonn\'ee, en utilisant \eref{V'=}, on a 
$$\frac{V}{ \sqrt{1 -\frac{V^2}{c^2}}} - \frac{ V'}{\sqrt{1 - \frac{v^2 + (V')^2}{c^2}}} = 0 = 
- \frac{V}{ \sqrt{1 -\frac{V^2}{c^2}}} + \frac{ V'}{\sqrt{1 - \frac{v^2 + (V')^2}{c^2}}}.$$

\noindent Cette hypoth\`ese faite sur la masse semble valable dans la mesure o\`u elle tend vers $+ \infty $ si la vitesse tend vers $c$. \\

\noindent En r\'esum\'e, on retiendra : \\

\noindent {\it Soit $q$ un objet de masse $m$ (au repos) vu par un observateur $A$. On suppose que $q$ a une vitesse $\overrightarrow{V}$ par rapport \`a $A$. Alors sa masse vue par $A$ est 
\begin{eqnarray} \label{masse_rel} 
m_{\overrightarrow{V}}= \frac{m}{\sqrt{1 - \frac{\| \overrightarrow{V} \|}{c^2}}}
\end{eqnarray}
et sa quantit\'e de mouvement est 
$$\overrightarrow{p}= \frac{m \overrightarrow{V}}{\sqrt{1 - \frac{\| \overrightarrow{V} \|}{c^2}}}.  $$
Avec ces d\'efinitions, on garde la loi de conservation de quantit\'e de mouvement \'enonc\'ee ci-dessus dans le cadre de la m\'ecanique classique. }

\noindent Le raisonnement ci-dessus est, d'un point de vue math\'ematique en tout cas, beaucoup moins rigoureux que ceux donn\'es dans le paragraphe pr\'ec\'edent pour \'etablir les R\`egles 1, 1' et 2. N\'eanmoins, les mesures physiques permettent de v\'erifier ces lois avec une grande pr\'ecision.

\section{La formule $E=mc^2$.}
L'{\it \'energie} est par d\'efinition la capacit\'e d'un syst\`eme � modifier un \'etat, \`a produire un travail entra\^{\i}nant un mouvement, de la lumi\`ere ou de la chaleur. 
La formule $E=mc^2$ appara\^{\i}t d\'ej\`a dans les travaux de Poincar\'e et dit qu'une particule au repos poss\`ede de par sa masse, une \'energie interne due aux forces d'interaction entre particules. L'intuition provient de la remarque exp\'erimentale suivante : si un corps \'emet une \'energie (par exemple par rayonnement) $E$, on mesure que sa masse diminue de $\frac{E}{c^2}$ d'\`ou l'id\'ee que cette masse $m$ se soit transform\'ee en \'energie avec la relation 
$m = \frac{E}{c^2}$. 
L'\'etude de cette grandeur physique joue un r\^ole fondamentale en relativit\'e en raison de la loi suivante : \\

\noindent {\bf Loi de conservation de l'\'energie :} {\it l'\'energie totale d'un syst\`eme qui n'a pas d'\'echange avec l'ext\'erieur est constante avec le temps.} \\

\noindent En particulier, tout comme l'impulsion d\'efinie dans le paragraphe pr\'ec\'edent, cette quantit\'e est une int\'egrale premi\`ere du syst\`eme ce qui se d\'efinit parfaitement en math\'ematiques. \\

\noindent Maintenant essayons de d\'eduire la formule $E=mc^2$ de la discussion pr\'ec\'edente. D'abord, il para\^{\i}t naturel de penser que l'\'energie au repos d'une particule doit \^etre proportionnelle \`a sa masse. Autrement dit, pour une particule de masse au repos $m_0$, on a $E= k m_0$ pour $k \in \mR$ si la particule est immobile. Changeons maintenant d'observateur. Si ce nouvel observateur mesure l'\'energie de la particule, il doit trouver la m\^eme valeur \`a laquelle s'ajoute l'\'energie cin\'etique de la particule. Par contre, le r\'esultat trouv\'e sera toujours proportionnel \`a sa masse $m$ observ\'ee. On a aussi envie de dire que ce coefficient de proportionnalit\'e doit \^etre universel. On fera donc l'hypoth\`ese suivante : 
l'\'energie totale d'une particule de masse $m$ mesur\'ee par un observateur (li\'ee \`a sa masse au repos par la formule \eref{masse_rel}) est de la forme $km$ o\`u $k \in \mR$. 
On vient de dire que l'\'energie totale de la particule \'etait son \'energie interne (i.e. son \'energie au repos) \`a laquelle s'ajoute son \'energie cin\'etique. En d'autres termes, on a $E=km = km_0+ E_c$ ou encore 
\begin{eqnarray} \label{energie_tot}
E_c= k (m -m_0).
\end{eqnarray}
   
\noindent {\bf Calcul de $k$ :}  
nous aurons besoin de deux lois fondamentales de la m\'ecanique classique  :

\begin{enumerate} 
\item Soit $q$ un objet de masse $m$. Notons $\overrightarrow{a}(t)$ son acc\'el\'eration \`a l'instant $t$. 
Alors $m \overrightarrow{a}(t) = \overrightarrow{F}$ o\`u $\overrightarrow{F}(t)$ est la r\'esultante des forces qui s'appliquent \`a $q$ \`a l'instant $t$. 
\item La diff\'erence d'\'energie cin\'etique (c'est-\`a-dire uniquement due \`a sa vitesse) de l'objet $q$ entre deux instants est \'egale au travail de la force $\overrightarrow{F}$ qui s'applique sur $q$ le long de sa trajectoire. 
\end{enumerate}

\noindent La deuxi\`eme loi n'a, a priori, aucune raison d'\^etre remise en question en relativit\'e.  Par contre,  la premi\`ere loi n'est pas satisfaisante puisque la masse d\'epend du temps. On remarque cependant que $m \overrightarrow{a}(t)$ n'est autre que la d\'eriv\'ee de la quantit\'e de mouvement en fonction du temps. Il para\^{\i}t plus naturel de garder cette formule en relativit\'e :
$\frac{d}{dt}\overrightarrow{p}(t) = \overrightarrow{F}(t)$.    
On rappelle que le travail d'une force sur une trajectoire $c:[a,b] \to \mR^3$ est donn\'e par 
$$\int_a^b (\overrightarrow{F}(c(t)), c'(t) ) dt$$
o\`u $\overrightarrow{F}(c(t))$ est la force qui s'applique en $c(t)$ et o\`u $(\cdot,\cdot)$ est le produit scalaire euclidien. 
Soit donc un objet $q$ de masse au repos $m_0$ soumis \`a une force
$\overrightarrow{F}$ constante. \`A l'instant $t=0$ supposons que cette particule est
au repos. On note  $E_c$ l'\'energie cin\'etique de la particule \`a
l'instant $t=1$, $\overrightarrow{p}(t)$ la quantit\'e de mouvement \`a l'instant $t$,
$m(t)$ la masse \`a l'instant $t$ et $v(t)$ la vitesse \`a l'instant
$t$. On remarque que le vecteur vitesse est en tout point proportionnel \`a
$\overrightarrow{F}$. Avec les lois 1 et 2, on obtient   

\begin{eqnarray*}
\begin{aligned} 
E_c & =  \int_0^1 (\overrightarrow{F},c'(t)) dt \\
& = \int_0^1 (m v)' v dt \; \hbox{ car } \| c'(t) \| = v(t) \\
& = \int_0^1 (mv^2)' - m v' v dt \\
& =  m(1) v^2(1) - \int_0^1 m v' v dt.
\end{aligned}
\end{eqnarray*}
En utilisant la valeur de la masse trouv\'ee ci-dessus
 $$E_c = \frac{m_0 }{\sqrt{1 - \frac{v(1)^2}{c^2}}} v(1)^2 - \int_0^1 \frac{m_0}{\sqrt{1 -\frac{v^2}{c^2} } } v' v dt.$$
 Posons maintenant $u = v(t)$ dans l'int\'egrale ci-dessus. On a alors 
\begin{eqnarray*}
\begin{aligned}
E_c & =   \frac{m_0 }{\sqrt{1 - \frac{v(1)^2}{c^2}}} v(1)^2 - \int_0^{v(1)} \frac{m_0 u}{\sqrt{1 -\frac{u^2}{c^2} } }  du \\ 
& =  \frac{m_0 }{\sqrt{1 - \frac{v(1)^2}{c^2}}} v(1)^2 + \left[  c^2 {m_0}\sqrt{1 -\frac{u^2}{c^2} } \right]_0^{v(1)} \\
& = \frac{m_0 }{\sqrt{1 - \frac{v(1)^2}{c^2}}} c^2 - m_0 c^2\\
& = (m(1) -m(0)) c^2.
\end{aligned}
\end{eqnarray*}

\noindent En comparant ce r\'esultat avec \eref{energie_tot}, on obtient que $k=c^2$ ce qui donne que l'\'energie totale d'une particule au repos de masse $m$ est $E=mc^2$.

%%%%%%%%%%%%%%%%%%%%%%%%%%%%%%%%%%%%%%%%%%%%%%%%%%%%%%%%%%%%%%%%%%%%%%
\chapter{Mod\'elisation de l'espace-temps} \label{modelisation}
%%%%%%%%%%%%%%%%%%%%%%%%%%%%%%%%%%%%%%%%%%%%%%%%%%%%%%%%%%%%%%%%%%%%%%%

%%%%%%%%%%%%%%%%%%%%%%%%%%%%%%%%%%%%%%%%%%%%%%%%%%%%%%%%%%%%%%%%%%%%%%
\section{En m\'ecanique classique} \label{modelisation_mc}
\smallskip

\noindent {\bf Mod\'elisation } { \it En m\'ecanique classique, l'univers est mod\'elis\'e par un espace affine 
$\M$ de dimension $4$ muni d'une forme quadratique $T$ sur $E:= \overrightarrow{\M}$
de signature $(+,0,0,0)$.}  

\begin{remark} On pr\'ef\`ere prendre un espace affine plut\^ot que $\mR^4$,
  ce qui \'evite d'avoir un point base et des directions privil\'egi\'ees. 
\end{remark}

\noindent {\it Orientation en temps: }
Comme les vecteurs isotropes ($T(v) = 0$ ) forment un hyperplan de $E$,
l'ensemble  
$E \setminus \{
v \in E |T(v) = 0 \} $ a exactement deux composantes connexes. Choisissons
l'une d'elles une fois pour toutes et notons-la $E^+$. Ce sont les
directions dites {\it positives}.  \\

\noindent Dans tout le paragraphe, $b$ d\'esignera la forme bilin\'eaire sym\'etrique associ\'ee \`a $T$.  

\begin{definition} 
\noindent \,

\begin{enumerate}
\item Un {\it observateur} est une courbe de genre temps  i.e. telle qu'il
  existe une param\'etrisation $c: I \to \M$ ($I$ est un intervalle r\'eel)
  tel que pour tout $t \in I$, $T(c'(t)) \not=0$.  

\item Un {\it observateur galil\'een} est une droite non isotrope. 
\end{enumerate}
\end{definition}

\noindent Consid\'erer les observateurs galil\'eens parmi tous les
observateurs est naturel pourtant, physiquement cela pose un probl\`eme.
Cela signifie qu'il y a des observateurs privil\'egi\'es dans
l'univers. Qui sont-ils ? Il faut remarquer que si l'on conna\^{\i}t un
observateur galil\'een, on les conna\^{\i}t tous.  \\

\noindent Choisissons maintenant un produit scalaire $g$ sur $\ker(T)= \{
v\in E | b(v,x)=0 \, \forall x \in E \, \}$. Notons $\| \cdot\|$ la norme
associ\'ee. On peut d\'efinir naturellement:

\begin{definition}
\noindent \, 

\begin{enumerate}
\item Soit $A,B \in \M$. On dit que $A$ et $B$ sont {\it simultan\'es} si $\overrightarrow{AB} \in \ker (T)$. "\^Etre simultan\'es" est une relation d'\'equivalence dont les classes sont de la forme $A + \ker(T)$. Ce sont des hyperplans affines qui physiquement, repr\'esentent l'univers \`a un instant donn\'e. 
\item Lorsque $A,B \in \M$ sont simultan\'es, on peut calculer leur distance: 
$$d(A,B) = \| \overrightarrow{AB} \|.$$ 
\item {\it Le temps qui s\'epare} $A,B \in \M$ est donn\'e par $\tau_{AB} = \sqrt{T(\overrightarrow{AB})}$. Autrement dit, deux points sont simultan\'es si et seulement si $\tau_{AB}=0$. 
\end{enumerate}
\end{definition}

\noindent Consid\'erons un observateur galil\'een $D$ dirig\'e par un
vecteur $i_D$ unitaire ($T(i_D)=1$) et orient\'e positivement. Si l'on fixe
une origine $A \in D$ (on notera $D_A$), on a un
isomorphisme naturel 
\[ \phi_{D_A} : \left| \begin{array}{ccc}
\M & \to&  \ker(T) \times \mR \\
B & \mapsto & (\overrightarrow{v}, t) 
\end{array} \right. \]
o\`u $\overrightarrow{v}, t$ sont d\'etermin\'es par l'\'ecriture unique $\overrightarrow{AB} = \overrightarrow{v} + t i_D$. 
Prendre une origine consiste \`a d\'efinir pour $D$ un temps $t=0$. Pour
$D_A$, l'{\it univers observable} \`a l'instant $t$ est
$\phi_{D_A}^{-1}(\ker(T) \times \{t \})$.\\ 

\noindent {\bf Param\'etrisation normale positive d'un observateur} 

\noindent(on dit aussi {\it param\'etrisation par le temps}).

\noindent Soit $D$ un observateur. Soit $c:I\to \M$ ($I$ est un intervalle de $\mR$) une param\'etrisation de $D$ telle que $T(c'(t))$ ne s'annule jamais sur $I$. Quitte \`a remplacer $c(t)$ par $c(-t)$, on peut supposer qu'en tout point $c'(t) \in E^+$. Posons 
$s(t) = \int_{t_0}^t \sqrt{T(c'(u))} du$ o\`u $t_0$ est un point fix\'e de $I$.   On voit que $s$ est un diff\'eomorphisme de $I$ sur l'intervalle $J:= s(I)$.
Posons maintenant $C = c \circ s^{-1} $. On voit que pour tout $t \in J$,
$$T(C'(t)) = 1 \, \hbox{ et } \, C'(t) \in E^+.$$
En effet, 
$$T(C'(t)) = T \big( c'(s^{-1}(t))  (s^{-1})'(t) \big) = T \big( \frac{c'(s^{-1}(t))}{s'(s^{-1}(t))} \big).$$ 
Le r\'esultat est maintenant clair puisque $s'(t) = \sqrt{T(c'(t))}$. 
Une telle param\'etrisation de $D$ est appel\'ee {\it param\'etrisation
  normale positive} de l'observateur $D$. On a montr\'e qu'une telle
param\'etrisation existe toujours et 
on remarque facilement qu'elle est unique \`a translation en temps pr\`es. \\

\noindent {\bf Vitesse} Soit $D, \tilde{D}$ deux observateurs et $\al,\tilde{\al}$ des param\'etrisations normales positives respectives de $D$ et $\tilde{D}$. Quitte \`a faire une translation en temps, on peut supposer que pour $t$ fix\'e, $\al(t)$ et $\tilde{\al}(t)$ sont simultan\'es.  Alors, 
$\tilde{\al}'(t)$ s'\'ecrit de mani\`ere unique 
$$\tilde{\al}'(t)= \overrightarrow{k} + a \al'(t)$$
o\`u $\overrightarrow{k} \in \ker T$ et o\`u $a \in \mR$. Comme $T(\tilde{\al}'(t))=T(\al'(t)) = 1$, on voit que $a=1$.

\begin{definition}
Le vecteur $\overrightarrow{k}$ est appel\'e {\it vecteur vitesse} de $\tilde{D}$ par rapport \`a $D$ et est not\'e
$\overrightarrow{v}_{\tilde{D}/D}$ 
\end{definition}

\noindent {\it Remarques et propri\'et\'es:} 
  \begin{enumerate}
  \item La vitesse ainsi d\'efinie d\'epend de l'instant $t$.  
\item Si $D, \tilde{D}$ sont des observateurs galil\'eens alors le
    vecteur vitesse $\overrightarrow{v}_{\tilde{D}/D}$ est constant en
    fonction du temps. Cette d\'efinition est bien conforme \`a l'id\'ee que l'on se fait de la vitesse  : le quotient de la distance par le temps.  Prenons en effet deux observateurs galil\'eens  $D$
    et $\tilde{D}$. Prenons $A, B\in D$ et $\tilde{A}, \tilde{B} \in
    \tilde{D}$ tels que $A$ et $\tilde{A}$ sont simultan\'es ainsi que $B$
    et $\tilde{B}$. Naturellement, on voit que la norme du  vecteur vitesse $\overrightarrow{v}_{\tilde{D}/D}$ est \'egal \`a 
 $$\frac{\| \overrightarrow{B\tilde{B}} - \overrightarrow{A \tilde{A} } \|}{\tau_{AB}}$$ 
 (i.e. distance / temps).  
\item On a $\overrightarrow{v}_{\tilde{D}/D} = - \overrightarrow{v}_{D/ \tilde{D}}$. 
\item Si $\tilde{\tilde{D}}$ est un troisi\`eme observateur, on a 
$$\overrightarrow{v}_{\tilde{\tilde{D}}/D} = \overrightarrow{v}_{\tilde{\tilde{D}}/\tilde{D}} +\overrightarrow{v}_{\tilde{D}/D}.$$
    \end{enumerate}

\noindent {\bf Acc\'el\'eration:} Reprenons les notations utilis\'ees pour la d\'efinition de la vitesse. On d\'efinit l'acc\'el\'eration de $\tilde{D}$ par rapport \`a $D$ par 
$\overrightarrow{a}_{\tilde{D}} = \frac{d}{dt} \overrightarrow{v}_{\tilde{D}/D}.$  
Supposons que $D$ est galil\'een. Puisque la vitesse relative de deux observateurs galil\'eens est constante, l'acc\'el\'eration d\'efinie ci-dessus {\bf ne d\'epend pas de l'observateur galil\'een $D$}.

%%%%%%%%%%%%%%%%%%%%%%%%%%%%%%%%%%%%%%%%%%%%%%%%%%%%%%%%%
\section{En relativit\'e restreinte} \label{modelisation_rr}
On abandonne la m\'ecanique classique pour la raison suivante: de mani\`ere exp\'eri--mentale, on constate que la vitesse de la lumi\`ere est constante (en norme) par rapport \`a n'importe quel observateur. Supposons qu'un observateur $D$ voie passer la lumi\`ere dans une certaine direction \`a la vitesse $c\simeq 300 000$ km/h et qu'un   autre observateur $D'$ ait une vitesse de $(c -1)$ km/h par rapport \`a $D$ dans la m\^eme direction que la lumi\`ere. La propri\'et\'e $(4)$ du vecteur vitesse d\'efini en m\'ecanique classique implique que la vitesse de la lumi\`ere par rapport \`a $D'$ sera de $1$ km/h, ce qui contredit l'exp\'erience. On doit donc abandonner le mod\`ele de la m\'ecanique classique. C'est ainsi qu'est n\'ee la relativit\'e restreinte en 1905 gr\^ace aux travaux d'Einstein.

\noindent {\bf Mod\'elisation } { \it En relativit\'e restreinte, l'univers est mod\'elis\'e par un espace affine 
$\M$ de dimension $4$ muni d'une forme quadratique $T$ sur $E:= \overrightarrow{\M}$
de signature $(-,+,+,+)$.}  

\noindent D'un point de vue physique, on prend un espace affine pour \'eviter qu'il y ait des points privil\'egi\'es. Malgr\'e tout, dans la pratique, on se placera la plupart du temps dans l'{\it espace de Minkowski} 
$(\mR^4,\eta)$ 
o\`u 
$$\eta:= -dt^2 + (dx^1)^2 + (dx^2)^2 + (dx^3)^2.$$  

\noindent Dans $E$, il y a trois types de vecteurs $\overrightarrow{v}$:
 \begin{itemize}
 \item les vecteurs de type {\it temps}: $T(\overrightarrow{v}) < 0$;
 \item les vecteurs de type {\it lumi\`ere}: $T(\overrightarrow{v}) = 0$;
 \item les vecteurs de type {\it espace}: $T(\overrightarrow{v}) > 0$.
 \end{itemize}

\noindent \\

\noindent   On peut comme en m\'ecanique classique choisir une orientation de
  temps. En effet, l'ensemble $E$ des vecteurs de type temps a deux
  composantes connexes. Il faut remarquer que cet ensemble est un c\^one
  dont le bord est l'ensemble des vecteurs de type lumi\`ere. 
On  choisit l'une des composantes connexes et on la note $E^+$, cet ensemble repr\'esentant l'ensemble des vecteurs de type temps orient\'es positivement. Une autre mani\`ere de voir les choses est de fixer un vecteur $\overrightarrow{v}_0$ de type temps et de dire qu'un vecteur $\overrightarrow{v}$ est orient\'e positivement si $g(\overrightarrow{v},\overrightarrow{v}_0) >0$, o\`u $g$ est la forme bilin\'eaire associ\'ee \`a $T$. \\

\noindent De la m\^eme mani\`ere, on d\'efinit 
\begin{itemize}
\item un {\it observateur}:  courbe de genre temps (i.e. dont tout vecteur tangent est de type temps)
 \item un {\it observateur galil\'een}: droite de type temps. 
 \end{itemize}
 \noindent Soit $D$ un observateur. En proc\'edant comme en m\'ecanique
 classique, on montre qu'il existe une param\'etrisation $c:I \to \M$ de $D$, unique
 \`a translation en temps pr\`es  que l'on appellera {\it param\'etrisation
   normale positive} qui v\'erifie
$T(c'(t))= -1$ et $c'(t) \in E^+$ pour tout $t \in I$. 
On verra plus loin qu'il y a d'autres param\'etrisations normales
naturelles. \\

\noindent Soit $D$ un observateur param\'etr\'e par $c: I \to \M$. 

\noindent En m\'ecanique classique, l'{\it espace vu par $D$} ou {\it univers
  observable pour $D$} \`a l'instant $t$ est l'ensemble des points
simultan\'es \`a $c(t)$ c'est-\`a-dire $c(t)+\ker(T)$. L'espace vu par $D$
ne d\'epend pas de $D$ mais seulement du point $c(t)$. 
 
\noindent En relativit\'e restreinte, on a une d\'efinition analogue : 
\begin{definition} on appelle {\it espace vu par $D$ au point $c(t)$}
l'espace affine $c(t) + [c'(t)]^\perp$. En particulier, {\bf cet espace
  d\'epend de l'observateur $D$} et pas seulement de $c(t)$. Physiquement,
cela correspond \`a l'ensemble des points simultan\'es \`a l'observateur
\`a un instant donn\'e. 
\end{definition} 
Si $A,B \in \M$, cela n'a pas de sens de se demander si $A$ et $B$
sont simultan\'es. Par contre si $A \in D$ et si $B \in \M$, on peut se
demander si $B$ est simultan\'e \`a $A$ pour $D$. C'est le cas si 
$B \in
c(t) + [v]^\perp$ o\`u $v$ est un vecteur tangent \`a $D$ en $A$. 
Contrairement \`a ce qui se passe en m\'ecanique classique, la
simultan\'eit\'e n'est pas sym\'etrique (si $A \in D$, $A' \in D'$ et si
$A'$ et $A$ sont simultan\'es pour $D$, ils ne le sont pas forc\'ement pour
$D'$). \\

\noindent Pour avoir une bonne image en t\^ete, le plus simple est d'imaginer 
 $\mR^2$ muni
de $- dt^2+dx^2$. Soit $D$ un observateur galil\'een. Si $D$ est
parall\`ele \`a l'axe des abscisses, l'espace vu par $D$ est vertical
(axe des ordonn\'ees). Si l'espace $\overrightarrow{D}$ (droite vectorielle associ\'ee \`a $D$) se rapproche de la position limite $x=t$ alors
$[\overrightarrow{D}]^\perp$ aussi ($[\overrightarrow{D}]^\perp$ est le sym\'etrique de $\overrightarrow{D}$ par rapport \`a $x=t$).

%\begin{figure}[h]
%\centerline{
%\epsfig{file=observ1.ps, scale=0.5}
%}
%\input{observ1.pstex_t}
%\caption{Simultan\'eit\'e en relativit\'e restreinte}
%\end{figure}

\noindent Ce mod\`ele est assez pratique pour visualiser correctement ce qui se passe, mais pour coller
plus \`a la r\'ealit\'e physique, il faudrait plut\^ot penser \`a $\mR^2$
muni de $-dt^2 +  \ep^2 dx^2$ avec $\ep$ petit. Ainsi, lorsque $\ep$ est
suffisamment petit, l'orthogonal de toute droite de type temps est
``presque'' verticale. En effet, l'orthogonal d'une droite dirig\'ee par
$(a,b)$ est dirig\'e par $(\ep,a/b)$ et on se rapproche du mod\`ele $\mR^2$ muni de
$-dx^2$ qui permet de visualiser la m\'ecanique classique (consid\'erer
$dt^2$ ou $-dt^2$ ne change rien \`a la g\'eom\'etrie). \\

\begin{definition}  Soit $D$ un observateur (pas forc\'ement galil\'een) 
param\'etr\'e par $c:I \to \M$.
\begin{itemize}
\item Soient  $A=c(t_1), B=c(t_2) \in D$. Le {\it temps propre pour l'observateur $D$}
entre $A$ et $B$ est donn\'e par 
$$\tau_{AB} = \int_{t_1}^{t_2} \sqrt{-T(c'(t))} dt.$$    
Physiquement, il s'agit du temps que mesure l'observateur $D$ 
entre $A$ et  $B$. 
\item Soient  $A,B \in \M$ simultan\'es pour $D$ (i.e. il existe $t$ tel que
  $A,B \in c(t) + [c'(t)]^\perp$). On d\'efinit la distance de $A$ \`a $B$ (pour $D$) par  $d(A,B)=  \sqrt{T(\overrightarrow{AB})}$. 
\end{itemize}  
\end{definition}

\begin{remark}
\noindent \, 

\begin{enumerate}
\item Le temps propre ne d\'epend pas de la param\'etrisation choisie. 
\item La d\'efinition est la m\^eme qu'en m\'ecanique classique (on
  remplace juste $T$ par $-T$):  en m\'ecanique classique,  lorsque l'on 
\'ecrit 
$$\tau_{AB} = \int_{t_1}^{t_2} \sqrt{T(c'(t))} dt$$
($T$ est ici de signature $(+,0,0,0)$)
on voit que l'on obtient $\tau_{AB}= \sqrt{T(\overrightarrow{AB})}$. En effet,
on peut \'ecrire de mani\`ere unique $\overrightarrow{Ac(t)} =  \overrightarrow{k}(t) + \alpha(t) \overrightarrow{AB}$ o\`u
$\overrightarrow{k}(t) \in \ker(T)$ et $\alpha(t) \in \mR$. 
Alors $c'(t) = \overrightarrow{k}'(t) + \alpha'(t)
\overrightarrow{AB}$ et  $\alpha'(t)$ est de signe constant (sinon $c$ n'est
pas de genre temps). Supposons par exemple $\alpha'(t)>0$. Alors, 
$\sqrt{T(c'(t))} = \alpha'(t) \sqrt{  T(\overrightarrow{AB})}$. 
Ainsi, $\tau_{AB} = (\alpha(t_2) - \alpha(t_1))  \overrightarrow{AB}$.
En revenant \`a la d\'efinition de $\alpha$, on voit que $\alpha(t_1)=0$
et $\alpha(t_2)= 1$ (car $c(t_1)=A$ et $c(t_2)=B$), d'o\`u le r\'esultat.

\item Si l'on se donne $A,B \in \M$, cela n'a pas de sens comme en
  m\'ecanique classique  de parler de
  temps qui s\'epare $A$ et $B$. Cela d\'epend de la trajectoire
  choisie. Imaginons $A,B \in \M$ tels que $ \overrightarrow{AB}$ est de
  type temps. Prenons la trajectoire directe (i.e. la droite $(AB)$). On
  regarde le temps propre entre $A$ et $B$ et l'on trouve
  $\tau_{AB}= \sqrt{-T(\overrightarrow{AB})}$. Maintenant, imaginons une trajectoire entre $A$ et $B$ de type
  lumi\`ere,  (ou du moins tr\`es proche d'une trajectoire de type
  lumi\`ere) et calculons le temps propre entre $A$ et $B$. On param\'etrise par $c:I \to \mR$. On voit que
  $T(\overrightarrow{c'(t)}) \equiv 0$ si bien que $\tau_{AB}=0$. \'Evidemment,
  physiquement, aucun observateur ne peut suivre une courbe de type
  lumi\`ere mais si la trajectoire s'en rapproche, le temps propre $\tau_{AB}$ sera tr\`es petit. 
 En particulier, on remarque que
{\bf si deux observateurs $D$ et $D'$ ont une trajectoire qui passent par $A$
  et $B$ et si $D$  voyage \`a une vitesse proche de celle de la
  lumi\`ere, i.e. avec une trajectoire dont la tangente se rapproche de la
  position limite ``lumi\`ere'', $D$ aura un temps propre beaucoup plus
  petit que $D'$ entre $A$ et $B$}. Cette  propri\'et\'e, contraire \`a
l'intuition,  est connue sous le nom de {\it
  paradoxe de langevin}. On la pr\'esente habituellement en disant que deux
jumeaux sont n\'es sur Terre. L'un part en voyage \`a une vitesse proche de
celle de la lumi\`ere. Quand il revient sur Terre, il est beaucoup plus
jeune que son fr\`ere.

%\begin{figure}[h]
%\centerline{
%\epsfig{file=observ2.ps, scale=0.5}
%}
%\input{observ2.pstex_t}
%\caption{Paradoxe des jumeaux de Langevin}
%\end{figure}

 \item Soit $\overrightarrow{v}$ un vecteur de type temps. Alors
   l'hyperplan vectoriel  $[\overrightarrow{v}]^\perp$ est de
   type espace (i.e. $T_{/ [\overrightarrow{v}]^\perp}$ est de signature
   $(+,+,+)$). En effet, la signature de $T$ est obtenue en ajoutant un $-$
   (car $\overrightarrow{v}$ de type temps) 
   \`a celle de $T_{/ [\overrightarrow{v}]^\perp}$. 
En particulier, dans la d\'efinition de $d(A,B)$, $T(
\overrightarrow{AB})>0$. 
\item Dans un sens, tout est beaucoup plus naturel qu'en m\'ecanique
  classique car il n'y a pas besoin de se donner un produit scalaire
  suppl\'ementaire. Toute l'information est contenue dans $T$. 
\end{enumerate}
\end{remark}

\noindent {\bf Param\'etrisation normale positive pour un observateur galil\'een $D$.} 
Soit $D$ un observateur galil\'een de vecteur directeur $i_D$ unitaire ($T(i_D)=-1)$ orient\'e positivement) param\'etr\'e par $c: I \to \M $ tel que pour tout $t$, $c'(t) = i_D$. 
Pour $D$, l'unit\'e de temps est
$i_D$. Maintenant consid\'erons un autre observateur $\tilde{D}$ param\'etr\'e par $\tilde{c}:\tilde{I}
\to \M$. Il est naturel de d\'ecomposer pour tout $t$
$\tilde{c}'(t) = \overrightarrow{k} + \alpha i_D$ o\`u $\alpha \in \mR$ et $\overrightarrow{k} \in
[i_D]^\perp$. En m\'ecanique classique, si $T(\tilde{c}'(t))= 1$ alors $\alpha = 1$. En relativit\'e restreinte, $T(\tilde{c}'(\tilde{t}))=-1$ n'implique pas $\alpha=1$. 
En particulier, il est naturel de consid\'erer une param\'etrisation
orient\'ee positivement pour laquelle $\alpha=1$ pour tout $t$, c'est-\`a-dire qui respecte l'unit\'e de temps pour $D$ (voir la Remarque \ref{rem_param} ainsi que le point 1 de la Remarque \ref{rem_vitesse}). Une telle
param\'etrisation existe,  est unique \`a translation en temps pr\`es
(m\^eme argument que pour l'existence des autres param\'etrisations
normales) et sera appel\'ee {\it param\'etrisation
  normale positive pour l'observateur $D$}. 
  
 \begin{remark}\label{rem_param} L'une des propri\'et\'es d'une telle param\'etrisation est la suivante:
  si $t \in I$, $\tilde{t} \in \tilde{I}$ sont  tels que 
$c(t)$ et $\tilde{c}(\tilde{t})$ sont simultan\'es pour $D$, alors pour tout $a$, $c(t+a)$  et $\tilde{c}(\tilde{t}+a)$ sont simultan\'es pour $D$. En effet, quitte \`a faire une translation en temps, on peut supposer $\tilde{t}=t$ et alors  
$$\tilde{c}(t+a) - c(t+a) = (\tilde{c}(t) -c(t)) + \int_t^{t+a} (\tilde{c}'(s) -c'(s)) ds \in [i_D]^\perp$$
car les deux termes du membres de droites sont dans $[i_D]^{\perp}$. On voit avec cet argument pourquoi il est n\'ecessaire de d\'efinir ce type de param\'etrisation relativement \`a un observateur galil\'een. 
Par contre, si $c$ et $\tilde{c}$ sont des param\'etrisations normales
positives de deux observateurs $D$ et $\tilde{D}$ et si $c(t)$ et $\tilde{c}(t)$ sont simultan\'es pour $D$, en g\'en\'eral {\bf $c(t+a)$ et $\tilde{c}(t+a)$ ne sont pas simultan\'es pour $D$.} 
\end{remark}

\noindent {\bf Vitesse:}
Soit $D$, $\tilde{D}$ deux observateurs param\'etr\'es  respectivement par $c:I \to \M$ et $\tilde{c}: \tilde{I} \to \M$. On suppose que $c$ est une  param\'etrisation normale positive. Fixons un point $A \in D$ que l'on \'ecrit $A=c(t)$. Quitte \`a faire une translation en temps, on peut supposer que $\tilde{c}(t)$ est simultan\'e \`a $c(t)$ pour $D$. 
Le vecteur $\tilde{c}'(t)$ s'\'ecrit de mani\`ere unique 
$$\tilde{c}'(t) = \overrightarrow{k} +\alpha c'(t)$$
o\`u $\overrightarrow{k}\in [c'(t)]^\perp$ et o\`u $\alpha \in \mR$.
\begin{definition} 
Le {\it vecteur vitesse} de $\tilde{D}$ par rapport \`a $D$ est 
$$\overrightarrow{v}_{\tilde{D}/D} = \frac{\overrightarrow{k}}{\alpha}.$$
\end{definition}

\begin{remark} \label{rem_vitesse}
\noindent \, 

\begin{enumerate}
\item Si $\tilde{c}$ est une param\'etrisation normale pour $D$, $\alpha=1$ et donc 
$\overrightarrow{v}_{\tilde{D}/D} = \overrightarrow{k}.$
\item Parler de vitesse sans pr\'eciser l'observateur par rapport auquel on se place n'a pas de sens sauf pour la vitesse de la lumi\`ere qui est constante par rapport \`a n'importe quel observateur (voir Proposition \ref{v_rel} ci-dessous). 
\item La vitesse relative de deux observateurs galil\'eens est constante. Avec les notations ci-dessus, $\overrightarrow{v}_{\tilde{D}/D}$
correspond exactement \`a la vitesse relative de deux observateurs galil\'eens dirig\'es par $c'(t)$ et $\tilde{c}'(t)$.
\item Cette d\'efinition correspond bien \`a l'intuition. Si $D$ et $D'$ sont deux observateurs galil\'eens qui se croisent en $A$ et si $B \in D$ et $B' \in D'$ sont simultan\'es pour $D$ alors
$$\overrightarrow{v}_{\tilde{D}/D} = \frac{\overrightarrow{BB'}}{\tau_{AB}}$$
(=distance parcourue dans la direction de $\overrightarrow{BB'}$ divis\'ee par le temps).
En effet, soit $\overrightarrow{v}$, $\overrightarrow{v}'$ des vecteurs directeurs  de $D$ et $D'$ normaux orient\'es positivement. \'Ecrivons $B=A+a\overrightarrow{v}$ et $B'= A + a'\overrightarrow{v}'$. Comme $B$ et $B'$ sont simultan\'es pour $D$, on a $\overrightarrow{BB'} \in [v]^\perp$. On \'ecrit $\overrightarrow{v}'= \overrightarrow{k}+s\overrightarrow{v}$ o\`u $\overrightarrow{k} \in [v]^\perp$ et o\`u $s \in \mR$. On a alors $a'\overrightarrow{v}' -a\overrightarrow{v} \in [v]^\perp$ i.e. $a' \overrightarrow{k} + (a's-a) \overrightarrow{v} \in [v]^\perp$ et donc $a's-a=0$ ou encore $a'= a/s$.
Cela donne que 
\begin{eqnarray} \label{bb'}
\overrightarrow{BB'}=a' \overrightarrow{k}.
\end{eqnarray} 
Par ailleurs, on a  par d\'efinition 

\noindent $$\overrightarrow{v}_{\tilde{D}/D}=  \frac{\overrightarrow{k}}{s} = \frac{a' \overrightarrow{k} }{a}= \frac{\overrightarrow{BB'}}{a}.$$
Comme $a= \sqrt{-T(\overrightarrow{AB})}= \tau_{AB}$ on a le r\'esultat cherch\'e. 
\end{enumerate}
\end{remark}

\noindent Avec cette mod\'elisation de l'espace, on a 
\begin{prop} \label{v_rel}
La vitesse de la lumi\`ere par rapport \`a n'importe quel observateur est constante. 
\end{prop}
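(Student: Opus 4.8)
\section*{Proposition de démonstration}

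L'idée est de partir d'un rayon lumineux $L$, c'est-à-dire d'une courbe paramétrée par $\ga : J \to \M$ dont tout vecteur tangent est de type lumière ($T(\ga'(s)) = 0$, $\ga'(s) \neq 0$) et orienté positivement, puis d'appliquer à $L$ la définition du vecteur vitesse relativement à un observateur quelconque $D$. Je commencerais donc par fixer un observateur $D$ muni d'une paramétrisation normale positive $c$, de sorte que $T(c'(t)) = -1$ et $c'(t) \in E^+$. Exactement comme dans la définition de la vitesse, j'écrirais le vecteur tangent au rayon lumineux de manière unique sous la forme
$$\ga'(s) = \overrightarrow{k} + \alpha\, c'(t), \qquad \overrightarrow{k} \in [c'(t)]^\perp, \ \alpha \in \mR,$$
le point de comparaison étant choisi, quitte à une translation en temps, pour que $\ga(s)$ soit simultané à $c(t)$ pour $D$. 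Le vecteur vitesse de la lumière vue par $D$ est alors $\overrightarrow{v}_{L/D} = \overrightarrow{k}/\alpha$.

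Le cœur du calcul consiste à exploiter simultanément les trois conditions $T(\ga'(s)) = 0$, $g(\overrightarrow{k}, c'(t)) = 0$ (car $\overrightarrow{k} \in [c'(t)]^\perp$) et $T(c'(t)) = -1$, où $g$ est la forme bilinéaire associée à $T$. En développant
$$0 = T(\overrightarrow{k} + \alpha c'(t)) = T(\overrightarrow{k}) + 2\alpha\, g(\overrightarrow{k}, c'(t)) + \alpha^2 T(c'(t)) = T(\overrightarrow{k}) - \alpha^2,$$
j'obtiendrais la relation clé $T(\overrightarrow{k}) = \alpha^2$. Comme $c'(t)$ est de type temps, la restriction de $T$ à $[c'(t)]^\perp$ est définie positive (c'est précisément ce qu'on a remarqué plus haut), de sorte que la norme de $\overrightarrow{v}_{L/D}$ a un sens et vaut
$$\|\overrightarrow{v}_{L/D}\| = \frac{\sqrt{T(\overrightarrow{k})}}{|\alpha|} = \frac{|\alpha|}{|\alpha|} = 1.$$
Cette valeur ne dépendant ni du point $\ga(s)$ ni de l'observateur $D$, la vitesse de la lumière est bien constante, égale à la valeur imposée par le cône de lumière du modèle de Minkowski $\eta$ (à savoir $1$ dans ces unités géométriques).

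Le principal point à surveiller n'est pas le calcul, qui est direct, mais la justification que la définition du vecteur vitesse s'applique encore à un rayon lumineux, bien que celui-ci ne soit pas un observateur au sens de la définition. En particulier, il faut vérifier que $\alpha \neq 0$ pour que le quotient $\overrightarrow{k}/\alpha$ ait un sens : si $\alpha$ était nul, $\ga'(s) = \overrightarrow{k}$ serait un vecteur non nul de $[c'(t)]^\perp$, donc de type espace ($T(\overrightarrow{k}) > 0$), ce qui contredirait $T(\ga'(s)) = 0$. Le signe de $\alpha$, lui, n'importe pas, seule $|\alpha|$ intervenant dans le calcul de la norme. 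C'est aussi à ce niveau que l'hypothèse de paramétrisation \emph{normale} de $D$ (qui fournit $T(c'(t)) = -1$) est essentielle, puisqu'elle est ce qui transforme la condition d'isotropie en l'égalité $T(\overrightarrow{k}) = \alpha^2$.
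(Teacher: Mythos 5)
Votre démonstration est correcte et suit essentiellement la même voie que celle du texte : même décomposition $\ga'(s)=\overrightarrow{k}+\alpha\,c'(t)$ avec $\overrightarrow{k}\in[c'(t)]^\perp$, même exploitation de l'isotropie pour obtenir $T(\overrightarrow{k})=\alpha^2$, d'où $\|\overrightarrow{v}_{L/D}\|=1$. Votre vérification explicite que $\alpha\neq 0$ est un petit plus de rigueur que le texte laisse implicite (il se contente de supposer $a>0$ quitte à remplacer $\overrightarrow{l}$ par $-\overrightarrow{l}$), mais l'argument est le même.
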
 

\noindent Dans  cette proposition, par "vitesse de la lumi\`ere", il faut bien \'evidemment comprendre "norme du vecteur vitesse de la lumi\`ere".

\begin{remark}
Avec la normalisation choisie, on trouve que la vitesse de la lumi\`ere est $1$. Pour changer cette valeur, il suffit de normaliser les vecteurs de type temps \`a une autre constante que $1$. 
\end{remark} 

\begin{proof}
Soit $D$ un observateur param\'etr\'e par $c:I \to \M$, param\'etrisation normale positive
et $L$ un rayon de lumi\`ere i.e. une droite de type lumi\`ere. 
Soit $\overrightarrow{l}$ un vecteur directeur de $L$ (qu'on ne peut pas normaliser puisque $T(\overrightarrow{l})=0$) on peut param\'etrer $L$ par $\tilde{c}(t)= M + t \overrightarrow{l}$.  Soient $A=c(t)$ un point de $D$ et $B =\tilde{c}(\tilde{t})$ un point de $L$ simultan\'e \`a $c(t)$ pour $D$. On \'ecrit 
$$\tilde{c}'(\tilde{t})= \overrightarrow{l} = \overrightarrow{k} + a c'(t)$$
o\`u $a \in \mR$ et o\`u $\overrightarrow{k} \in [c'(t)]^\perp$. Quitte \`a remplacer $\overrightarrow{l}$ par $-\overrightarrow{l}$ on peut supposer que $a>0$. Par d\'efinition, on a 
$$\overrightarrow{v}_{L/D} = \frac{\overrightarrow{k}}{a}.$$  
Remarquons que 
$$0= g(\overrightarrow{l},\overrightarrow{l}) = g(\overrightarrow{k},\overrightarrow{k}) + a^2 g(c'(t),c'(t)).$$
 Comme $g(c'(t),c'(t))=-1$ et comme $a>0$, on a 
 $a = \sqrt{T(\overrightarrow{k})}$, ce qui implique que  
 $$\overrightarrow{v}_{L/D} = \frac{\overrightarrow{k}}{a}.$$
 Ainsi 
 $$\|\overrightarrow{v}_{L/D}\| = \sqrt{T(\overrightarrow{v}_{L/D})} = 1.$$
 ce qui d\'emontre le r\'esultat.  

\end{proof}

%%%%%%%%%%%%%%%%%%%%%%%%%%%%%%%%%%%%%%%%%%%%%%%%%%%%%%%%%%%%%%%
\section{En relativit\'e g\'en\'erale} \label{modelisation_rg}
On abandonne la relativit\'e restreinte principalement parce que, comme on
le verra dans le prochain chapitre, elle n'est pas adapt\'ee \`a la
description du comportement de la mati\`ere. Un autre probl\`eme est qu'en
relativit\'e restreinte, comme en m\'ecanique classique, les observateurs
galil\'eens sont des observateurs privil\'egi\'es, ce qui physiquement
n'est pas satisfaisant. Einstein a ainsi introduit la th\'eorie de la relativit\'e g\'en\'erale, dont il a publi\'e les bases en 1915.

\noindent {\bf Mod\'elisation } { \it En relativit\'e g\'en\'erale, l'univers est mod\'elis\'e par une vari\'et\'e $\M$ munie d'une m\'etrique lorentzienne $g$, c'est-\`a-dire une m\'etrique de signature $(-,+,+,+)$ sur chaque espace tangent $T_x\M$.}

%\noindent On pourra regarder l'Appendice \ref{lorentzienne} pour avoir quelques pr\'ecisions sur la g\'eom\'etrie lorentzienne. 

\begin{remark} On simplifiera en prenant des vari\'et\'es $C^\infty$ mais Hawking a \'etudi\'e les cons\'equences de consid\'erer des vari\'et\'es de r\'egularit\'e plus faible. 
\end{remark}

\noindent Soit $\overrightarrow{v} \in T \M$. On dit que 
\begin{itemize}
\item $v$ est de genre {\it temps} si $g(\overrightarrow{v},\overrightarrow{v}) <0$;
\item $v$ est de genre {\it lumi\`ere} si $g(\overrightarrow{v},\overrightarrow{v}) =0$;
\item $v$ est de genre {\it espace} si $g(\overrightarrow{v},\overrightarrow{v}) >0$.
\end{itemize}
Une courbe est de genre temps (resp. lumi\`ere, resp. espace) si en tout point ses vecteurs tangents sont de type temps (resp. lumi\`ere, resp. espace). \\

\noindent Dans chaque espace tangent $T_x\M$, l'ensemble 
$E_x:= \{ \overrightarrow{v} \in T_x \M| g_x(v,v) <0\}$ a deux composantes connexes.

\begin{definition}
Une {\it orientation en temps continue} de $(\M,g)$ est une orientation en temps de chaque espace tangent (i.e. le choix d'une composante connexe $E_x^+$ de $E_x$) telle que pour tout $x \in \M$, il existe un voisinage $V_x$ de $x$ et un champ de vecteur $X \in \Gamma(TV_x)$ sur $V_x$ tel que pour tout $y \in V_x$, $X(y)$ est dans $E_x^+$.
\end{definition}

\noindent Si un tel choix existe, on dit que $(\M,g)$ est {\it orientable en temps}. Dans la suite, on suppose $(\M,g)$ est {\it orient\'ee en temps}, c'est-\`a-dire que $(\M,g)$ est orientable en temps et qu'une orientation en temps continue a \'et\'e fix\'ee.

\begin{definition} 
\noindent $\,$

\begin{itemize}
\item On appelle {\it observateur} une courbe de genre temps. 
\item On appelle {\it observateur en un point $x\in \M$}  la donn\'ee d'un vecteur $\overrightarrow{v} \in T_x\M$ de genre temps, unitaire (i.e. $g(\overrightarrow{v},\overrightarrow{v})= 1$) et orient\'e positivement (i.e. $\overrightarrow{v} \in E_x^+$). 
\item  Soit $D$ un observateur (ou un observateur en un point). L'{\it espace global vu par $D$} en $x \in D$ est la partie de $\M$ qui est $g_x$-orthogonale, c'est-\`a-dire la partie $\E_x$ de $\M$ form\'ee de la r\'eunion de toutes les g\'eod\'esiques issues de $x$ et orthogonale \`a $D$ en $x$.
\item Soit $D$ un observateur param\'etr\'e par $c:I \to \M$. Le temps propre entre $A=c(a) \in \M$ et $B=c(b) \in M$ est donn\'e par 
$$\tau_{AB}=\int_a^b \sqrt{-g(c'(t),c'(t))} dt.$$  
\end{itemize}
\end{definition}

\begin{remark}
\noindent \,

\begin{enumerate}
\item De m\^eme qu'en relativit\'e restreinte et en m\'ecanique classique, si $D$ est un observateur, il existe une {\it param\'etrisation normale positive} de $D$, unique \`a translation en temps pr\`es, i.e. une param\'etrisation $c:I \to \M$ telle que pour tout $t$, $g(c'(t),c'(t)=1$ et $c'(t) \in E_{c(t)}^+$.
\item En relativit\'e restreinte, on n'avait pas besoin de la notion d'observateur en un point, bien que beaucoup de notions auraient pu se restreindre \`a cette d\'efinition (par exemple la vitesse ne d\'ependait que de la position et du vecteur tangent). 
\item L'espace global vu par un observateur en $x$ est une sous-vari\'et\'e
  de type espace
  de dimension $3$ au voisinage de $x$ 
 (m\^eme argument qu'en relativit\'e restreinte).
\end{enumerate}
\end{remark}

\noindent Avec ces d\'efinitions, parler de vitesse n'a pas vraiment de
sens. En effet, soit $D$  un observateur en un point $x \in \M$ 
dirig\'e par $\overrightarrow{v} \in T_x \M$ unitaire. On a besoin de
d\'ecomposer un vecteur d'un autre espace tangent $T_y\M$ en une composante
sur $\overrightarrow{v}$ et une composante sur
$[\overrightarrow{v}]^\perp$. Il y a plusieurs mani\`eres de la faire, mais aucune n'est canonique. 

%%%%%%%%%%%%%%%%%%%%%%%%%%%%%%%%%%%%%%%%%%%%%%%%%%%%%%%%%%%%%%%
\chapter{De la mati\`ere dans l'espace-temps} \label{matiere}
%%%%%%%%%%%%%%%%%%%%%%%%%%%%%%%%%%%%%%%%%%%%%%%%%%%%%%%%%%%%%%
Ce chapitre a pour but d'arriver jusqu'\`a l'axiomatique de la relativit\'e g\'en\'erale pour  d\'ecrire le comportement de la mati\`ere. Avant d'en arriver \`a ce stade, il faut comprendre quels sont les probl\`emes pos\'es par la m\'ecanique classique et la relativit\'e restreinte. Quelle que soit la mani\`ere dont on construit la th\'eorie, il faut garder \`a l'esprit qu'un "observateur humain" doit percevoir les mouvements pr\'edits par les lois de Newton. Ces r\`egles ne peuvent en aucun cas \^etre remises en cause \`a vitesse faible (par rapport \`a celle de la lumi\`ere). La principale diff\'erence entre la relativit\'e g\'en\'erale et la m\'ecanique classique doit surtout se faire sentir soit \`a grande \'echelle, soit lorsque des vitesses importantes sont en jeu (par exemple, un GPS qui analyse tr\`es pr\'ecis\'ement la position d'un utilisateur \`a partir d'ondes tient compte des effets relativistes). C'est pourquoi dans ce chapitre, nous commen\c {c}ons par rappeler les lois utilis\'ees pour d\'ecrire le comportement de la mati\`ere en m\'ecanique classique et en  relativit\'e restreinte, ce qui nous am\`enera naturellement \`a l'axiomatique de la relativit\'e g\'en\'erale.  

%%%%%%%%%%%%%%%%%%%%%%%%%%%%%%%%%%%%%%%%%%%%%%%%%%%%%%%%%%%%%%%%%%%%%
\section{Particules et fluides} 
%\noindent {\bf Dans ce qui suit, les d\'efinitions que nous donnons sont valables aussi bien %%m\'ecanique classique, qu'en relativit\'e restreinte et relativit\'e g\'en\'erale.} Bien que %%valables en toute g\'en\'eralit\'e, ces d\'efinitions pourraient \^etre simplifi\'ees en %m\'ecanique classique en utilisant le fait que l'espace temps ne d\'epend pas de %l'observateur. Cependant, pour ce que nous allons faire par la suite, il vaut mieux %pr\'esenter ce point de vue.  

\noindent En m\'ecanique classique, relativit\'e restreinte et relativit\'e g\'en\'erale, la mati\`ere est suppos\'ee se composer de particules qui se d\'efinissent de la mani\`ere suivante.

\begin{definition}
En m\'ecanique classique, relativit\'e restreinte et relativit\'e g\'en\'erale, une particule est un couple $(\C,m)$ o\`u $\C$ est une courbe de genre temps et $m$ est un nombre positif ou nul, la {\it masse} de $p$.
\end{definition}
Autrement dit, une particule est un observateur muni d'une masse. Lorsqu'on prend en compte les ph\'enom\`enes \'electromagn\'etiques, on lui attribue \'egalement
une charge $e$.\\

\noindent Malheureusement, si on s'int\'eresse au mouvement de chaque
particule, les \'equations qui apparaissent m\^eme en m\'ecanique classique
sont quasiment irr\'esolubles. Cela conduit \`a consid\'erer la mati\`ere
comme un fluide.

\begin{definition} 
\noindent \,

\begin{enumerate}
\item Une {\it congruence de courbes} (terminologie de S. Hawking) 
sur un domaine $\Om$ de $M$ est une famille de courbes de type temps qui ne
se coupent pas. Plus pr\'ecis\'ement, il s'agit d'une famille de courbes 
$(\C_j)_{j\in X}$ de type temps telle que tout $x \in \Om$ admet un
voisinage ouvert $w_x$  et un diff\'eomorphisme (avec r\'egularit\'e
suffisante pour que tout soit bien d\'efini) 
$\phi_x:w_x \to I \times B$ (o\`u $I$ est un intervalle ouvert de $\mR$ et o\`u $B$ est une
boule ouverte de $\mR^3$) tel que pour tout $j \in X$, il existe un unique
$y \in B$ avec  $\phi_x(\C_j \cap w_x) = I \times \{ y \}$. 
\item Un {\it fluide} dans $\M$  est un couple $(\hbox{congruence de courbes}, \, \rho)$
  o\`u $\rho : \M \to \mR^+$ est une fonction appel\'ee {\it densit\'e de
    masse} du fluide. Ce couple devra v\'erifier une propri\'et\'e suppl\'ementaire  que l'on d\'efinira 
  plus tard (voir la "propri\'et\'e requise" ci-dessous). 
\end{enumerate}
\end{definition}
 
%\begin{remark} 
%En m\'ecanique classique, certaines on peut simplifier les d\'efinitions mais pour ce qui nous int\'eresse ici, il faut garder ce point de vue.  
%\end{remark} 

\noindent Physiquement, les courbes repr\'esentent les trajectoires de
chaque point du fluide. Avec ce point de vue, on ne voit plus les particules une \`a une.   Prenons un observateur $D$  
{\it attach\'e au fluide} (i.e. l'une des
courbes de la congruence). La   fonction
densit\'e de masse, physiquement, se d\'efinit comme suit: au point $x$,
consid\'erons {\it l'observateur $D_x$ fix\'e au fluide} (i.e. la courbe
qui passe par $x$). Le densit\'e de masse
est la limite du quotient de la masse mesur\'ee par $D_x$ (i.e. la somme
des masses des particules) contenue dans un voisinage $v_x$ de $\E_x$ (espace
vu
par $D$ en $x$) par le volume pour la m\'etrique riemannienne induite de
$v_x$ lorsque $v_x$ se r\'eduit autour de $x$. 

\begin{remark}
On pourrait penser que la fonction densit\'e de masse d\'ecrit compl\`etement le fluide \`a elle seule puisqu'elle indique la quantit\'e de mati\`ere pr\'esente \`a tout instant et \`a tout endroit. En fait, elle n'est pas suffisante : par exemple, sans la donn\'ee de la congruence de courbes, on n'a aucun moyen de d\'etecter la rotation d'une
particule sph\'erique.
\end{remark}

\begin{definition} 
Consid\'erons un fluide $F$ dans $\M$. Le {\it champ de vecteurs
    unitaire 
  associ\'e \`a $F$} est le champ de vecteurs $\overrightarrow{u}$  form\'e des
vecteurs tangents aux courbes du fluides, normaux
($T(\overrightarrow{u}) = 1$ en m\'ecanique classique, $T(\overrightarrow{u})= g(\overrightarrow{u},\overrightarrow{u})=-1$ en relativit\'e restreinte et $g(\overrightarrow{u},\overrightarrow{u})=-1$ en relativit\'e g\'en\'erale) et orient\'es positivement.
\end{definition} 

\noindent 
\begin{definition} \label{masseaurepos}
Consid\'erons une hypersurface $S$ de type espace
et  un fluide $F$ de densit\'e de masse $\rho$. 
On appelle {\it masse au repos
    de $F$ sur $S$} le flux du champ $\overrightarrow{u}$ \`a travers $S$.
En relativit\'e restreinte et relativit\'e g\'en\'erale, elle est d\'efinie par l'int\'egrale
$$ -\int_S  \rho g(\overrightarrow{u},\overrightarrow{n})$$
o\`u $\overrightarrow{u}$ est le champ de vecteurs unitaire, o\`u $\overrightarrow{n}$ est le
champ de vecteur $g$-orthogonal \`a $S$, unitaire ($g(\overrightarrow{n},\overrightarrow{n})=-1$) et
orient\'e positivement. 
\end{definition} 

%\begin{remark}
%%Une d\'efinition analogue existe en m\'ecanique classique mais nous n'en aurons pas besoin pour la suite.  
%\end{remark}

\noindent Donnons quelques explications sur la ``masse au repos''. Supposons que $S$ soit $g$-orthogonale \`a $\overrightarrow{u}$, c'est-\`a
-dire, d'un point de vue physique, au "mouvement" du fluide. Alors, $g(\overrightarrow{u},\overrightarrow{n}) = -1$ (car $\overrightarrow{u} = \overrightarrow{n}$) et la masse au repos
est la masse de fluide que contient $S$ mesur\'ee par un observateur $D$ fix\'e
au fluide. \\

\noindent {\bf Propri\'et\'e requise (en m\'ecanique classique, relativit\'e restreinte et relativit\'e g\'en\'erale) :} 
 
\noindent {\it Soit $F$ un fluide, $\rho$ et  $\overrightarrow{u}$ respectivement  la densit\'e de masse
et le champ de vecteurs unitaire associ\'es \`a $F$. On impose que 
\begin{eqnarray} \label{div=0}
 \div(\rho
\overrightarrow{u}) = 0.
\end{eqnarray}}

\noindent Cette condition traduit le fait qu'il n'y a pas de perte de
mati\`ere entre deux instants donn\'es. Essayons de comprendre pourquoi. On se place dans le cadre de la relativit\'e g\'en\'erale ou restreinte. 
Reprenons la d\'efinition de la congruence de courbes: pour $x \in \M$ il
existe un voisinage $w_x$ diff\'eomorphe via $\phi_x$ \`a $I \times
B$. Pour simplifier, supposons que $I = ]0,1[$ et identifions $w_x$ \`a $]0,1[ \times
B$ (on confond $w_x$ et son image). Le bord de $w_x$ est
form\'e de trois parties: $S_0:= \{0\} \times B$, $S_1:=\{1\} \times B$  et
$S_2 := ]0,1[ \times
S^1$. On se place dans la situation la plus claire physiquement : 
 $S_0$ et $S_1$ sont
$g$-orthogonales au fluide - c'est-\`a-dire que les points de $S_0$ et $S_1$
sont tous simultan\'es pour un observateur fix\'e au fluide- alors que
$S_2$ est tangente au fluide. Notons  $\overrightarrow{n}_j$ le vecteur normal \`a 
$S_j$ ($i,j \not=2$ sinon $S_j$ n'est pas de genre espace). Notons aussi
$ds_g$
l'\'el\'ement de volume induit par $g$ sur $S_j$.

%\begin{figure}[h]
%\centerline{
%\epsfig{file=Cons_mass.ps, scale=0.5}
%}
%\input{Cons_mass.pstex_t}
%\caption{Conservation de la masse}
%\label{c_m}
%\end{figure}
\noindent Avec le th\'eor\`eme de Stokes,
 
$$0= \int_{w_x} \div(\rho \overrightarrow{u}) dv_g = \sum_{i=1}^3 
\int_{S_j} \rho g(\overrightarrow{u},\tilde{\overrightarrow{n}}_j) ds_g.$$
o\`u $\tilde{\overrightarrow{n}}_j$ est le vecteur $g$-orthogonal \`a $S_j$, unitaire et
sortant. Autrement dit, $\tilde{\overrightarrow{n}}_0= -\overrightarrow{n}_0$ et
$\tilde{\overrightarrow{n}}_1= \overrightarrow{n}_1$. De plus, il est clair 
que 
$$\int_{S_2} \rho g(\overrightarrow{u},\tilde{\overrightarrow{n}}_2) ds_g = 0.$$
On obtient ainsi 
$$0=\int_{S_1} \rho g(\overrightarrow{u},\overrightarrow{n}_1) ds_g - \int_{S_0} \rho
g(\overrightarrow{u},\overrightarrow{n}_0) ds_g$$
ce qui montre que la masse au repos du fluide sur $S_0$ est la m\^eme que
sur $S_1$.\\

\noindent \`A l'\'echelle de l'univers, les fluides sont compos\'es
d'\'etoiles, de galaxies qui jouent le r\^ole de particules. Ces particules
s'entrochoquent rarement et les forces qui s'exercent entre elles ne sont
pas de nature \'electromagn\'etique (en fait, on verra qu'en relativit\'e g\'en\'erale les particules n'interagissent pas entre elles).  Cela conduit \`a introduire la d\'efinition suivante : 

\begin{definition}
Un {\it fluide parfait sans pression} est un fluide dans lequel les particules sont
ind\'ependantes les unes des autres et dans lequel il n'y a pas d'autre
\'energie que celle des particules (pas de chocs, c'est-\`a-dire pas de viscosit\'e,  pas de rotation)
\end{definition} 

\noindent Cette d\'efinition est parfaitement adapt\'ee au mod\`ele de la relativit\'e g\'en\'erale, o\`u comme on le verra plus tard, les particules sont suppos\'ees ne pas avoir d'interactions entre elles. Un {\it fluide parfait} inclut normalement la pression (qui est une \'energie suppl\'ementaire) mais  lorsque les particules du  fluide sont constitu\'ees d'\'etoiles, de plan\`etes et de galaxie, la pression est suppos\'ee nulle, sauf \`a l'int\'erieur des particules. Ce mod\`ele est aussi utilis\'e dans d'autres cadres, par exemple dans le cas d'un fluide de tr\`es faible viscosit\'e (par exemple en a\'erodynamique).
%%%%%%%%%%%%%%%%%%%%%%%%%%%%%%%%%%%%%%%%%%%%%%%%%%%%%%%%%%%%

\section{En m\'ecanique classique} 
En m\'ecanique classique, les principes utilis\'es sont ceux de Newton, qui traduisent l'{\it attraction universelle} (deux objets quelconques s'attirent mutuellement), id\'ee qui sera compl\`etement abandonn\'ee en relativit\'e g\'en\'erale.   
Il y a deux points de vue diff\'erents pour mod\'eliser l'attraction universelle. Soit on utilise les lois de Newton, soit on utilise la notion de Lagrangien. Bien \'evidemment, quel que soit le point de vue choisi, on retrouve les m\^emes r\'esultats.  
%%%%%%%%%%%%%%%%%%%%%%%%%%%%%%%%%%%%%%%%
\subsection{Point de vue du potentiel pour des particules}

\noindent {\it Ce paragraphe a pour but de formuler la loi de Newton qui
  traduit l'attraction universelle. } \\

\noindent Soit $p=(\C,m)$ une particule. 

\begin{definition} 
Le {\it potentiel} cr\'e\'e par $p$ est la fonction d\'efinie sur $\M$ par 
$$f_p(M) = - \frac{km}{d(p,M)}$$
o\`u $k$ est une constante universelle appel\'ee {\it constante de gravitation} et o\`u $d(M,p)$ est la distance introduite dans le paragraphe \ref{modelisation_mc} du chapitre \ref{modelisation}. 
\end{definition}

\noindent Le comportement de la mati\`ere est alors r\'egi par la \\

\noindent {\bf Loi de Newton} {\it Soient $p_1=(\C_1,m_1),\cdots,p_n=(\C_n,m_n)$ des particules. On suppose que les courbes $\C_i$ ne se coupent pas. Alors pour tout $i \in \{1,\cdots,n \}$ l'acc\'el\'eration de la particule $p_i$ en $M \in \C_i$ est 
$$\overrightarrow{a_i}(M) = - \sum_{j \not= i} \nabla_{x} f_j(M)$$
o\`u $\nabla_x$ est le gradient calcul\'e dans la direction de $\ker(T)$. } \\

\noindent L'acc\'el\'eration est calcul\'ee relativement \`a un observateur
galil\'een mais rappelons qu'elle ne d\'epend pas de l'observateur
galil\'een choisi. Ce syst\`eme diff\'erentiel d'ordre 2
est presque impossible \`a r\'esoudre d\`es qu'il y a $3$ particules ou plus en jeu ({\it probl\`eme des 3 corps}).\\

%%%%%%%%%%%%%%%%%%%%%%%%%%%%%%%%%%%%%%%%%%%%%%%%%%%%%%
\subsection{Point de vue du potentiel pour un fluide parfait sans pression}

\noindent {\it Ce paragraphe sert  \`a faire deviner quelles seront les bons axiomes \`a poser en relativit\'e g\'en\'erale pour qu'\`a vitesse faible (par rapport \`a celle de la lumi\`ere), on puisse retrouver des lois proches de celles de Newton.}\\

\noindent  On rappelle que le potentiel cr\'e\'e par des particules $p_1=(\C_1,m_1),\cdots, p_1=(\C_1,m_1)$ est
$$f(M)= - k \sum_{i=1}^m \frac{m_i}{d(M,\C_i)}$$
o\`u $d(M,\C_i)$ est la distance de $M$ \`a l'unique point de $\C_i$ qui est simultan\'e \`a $M$. Par extension, si $F$ est un fluide parfait sans pression de densit\'e de masse $\rho$, on d\'efinit le potentiel cr\'e\'e par $F$ en posant  
$$f(M) = - \int_{M+\ker(T)} \frac{\rho(y)}{d(M,y)} dv_g(y)$$ 
o\`u rappelons-le $g$ est le produit scalaire dont nous avons muni $\ker(T)$ et o\`u l'on a choisi l'unit\'e de masse pour que $k = 1$. Soit maintenant $D$ un observateur galil\'een dirig\'e par $i_D$ unitaire, orient\'e positivement. 
On rappelle que d\`es lors qu'on choisit une origine $A\in D$ (ce qui correspond \`a choisir un instant $t=0$ pour $D$) $D$ "voit" $\M$ comme $\mR \times \ker(T)$ via l'isomophisme $\phi_D$ d\'ecrit dans le paragraphe \ref{modelisation_mc} du chapitre \ref{modelisation}. L'hyperplan $\{t\} \times \ker(T)$ correspond \`a l'espace observable par $D$ \`a l'instant $t$. Via cet isomorphisme, $f$ se r\'e\'ecrit 
$$f(t,x)= - \int_{\ker(T)} \frac{\rho(t,y)}{d(x,y)} dv_g(y)$$
pour tout $(t,x) \in \mR \times \ker(T)$. \\

\begin{remark}
Avec les m\^emes notations, si $\Om \subset \ker(T)$, l'int\'egrale
$$\int_\Om \rho(t,y) dv_g(y)$$
repr\'esente la masse du fluide qui se trouve dans $\Om$ (bien s\^ur, ce $\Om$ d\'epend de $D$ \`a l'instant $t$). 
\end{remark}

\noindent Maintenant, il faut se souvenir que la fonction de Green du laplacien  sur $(\ker(t),g)$ est 
$$G(x,y) = \frac{1}{4 \pi d(x,y)}$$
(on travaillera toujours avec le laplacien avec la convention de signe
suivante : il est \'egal \`a $- \sum_{i}^n \frac{\partial^2}{\partial x_i^2}$ lorsque la carte $(x_1,\cdots,x_n)$ est une isom\'etrie sur un ouvert de $\mR^n$ muni de sa m\'etrique standard) 
et ainsi 
$$f(t,x)= - 4 \pi \int_{\ker(T)} G(x,y) \rho(t,y) dv_g(y).$$   
Autrement dit, on a 

\begin{eqnarray} \label{Relation1}
\Delta_xf = - 4 \pi \rho.
\end{eqnarray}

\noindent La loi de Newton traduite sur les courbes du fluide est alors donn\'ee par 
\begin{eqnarray} \label{Relation2}
\overrightarrow{a}(t,x)= - \nabla_x f (t,x)
\end{eqnarray}

o\`u $\overrightarrow{a}(t,x)$ est l'acc\'el\'eration au point $x$ et au temps $t$ de la courbe du fluide passant par $(t,x)$. Maintenant, on rappelle que 
\begin{eqnarray} \label{Relation3}
\div(\rho \overrightarrow{u})=0
\end{eqnarray}
o\`u $\overrightarrow{u}$ est le champ de vecteurs unitaire associ\'e au fluide et  que cette relation traduit la conservation de masse. Lorsqu'on consid\'erait la mati\`ere particule par particule, cette relation \'etait juste remplac\'ee par le fait qu'il y avait \`a tout instant le m\^eme nombre de particules et que leur masse \'etait constante. \\

\noindent {\bf Les relations \eref{Relation1}, \eref{Relation2}
et \eref{Relation3} sont les relations qui r\'egissent le mouvement d'un fluide parfait sans pression en m\'ecanique classique. Ce sont elles que l'on va essayer de retrouver \`a vitesse faible en relativit\'e g\'en\'erale.}

%%%%%%%%%%%%%%%%%%%%%%%%%%%%%%%%%%%%%%%%%%
\subsection{Point de vue du lagrangien}

\noindent {\it Ce paragraphe donne une formulation \'equivalente \`a la loi de Newton qui permet d'introduire naturellement les notions d'\'energie et d'impulsion qui seront \`a la base  de la th\'eorie en relativit\'e g\'en\'erale. Pour finir nous regarderons l'exemple d'une particule dans le vide. La lecture de ce paragraphe n'est pas indispensable pour comprendre d'o\`u vient l'\'equation d'Einstein. } \\

Le point de vue du  lagrangien consiste \`a voir les trajectoires des particules comme des chemins minimisant une fonctionnelle appel\'ee {\it fonctionnelle d'action}  (en quelque sorte des g\'eod\'esiques sauf que cette fonctionnelle d\'epend du syst\`eme physique). On travaille donc particule par particule. Physiquement, la fonctionnelle d'action calcule pour une trajectoire donn\'ee 
l'\'energie cin\'etique de la trajectoire moins l'\'energie potentielle cr\'e\'ee par les autres particules. En effet, les particules vont avoir tendance \`a suivre les trajectoires qui leur font d\'epenser le moins d'\'energie (\'energie cin\'etique) et qui va utiliser au maximum l'\'energie potentielle des autres particules. En fait, on va oublier cette interpr\'etation physique en relativit\'e restreinte.

\begin{definition}
\noindent \,

\begin{enumerate}
\item Un {\it lagrangien} d'une particule $p=(\C,m)$ dans un syst\`eme
  physique (i.e. dans un ensemble de particules contenant $p$) est une
  application $C^1$ $L: \M \times E  \to \mR$ qui v\'erifie plusieurs axiomes que nous pr\'eciserons plus tard et qui permettront de mod\'eliser les trajectoires des particules.
\item Soit $L$ un lagrangien de $p$ et $A,B \in \C$. Dans la suite on consid\'erera toujours que $B$ est ult\'erieur \`a $A$.   Notons $\tau_{AB}$ le temps entre $A$ et $B$. Soit $\C'$ une autre courbe de genre temps passant par $A$ et $B$ param\'etr\'ee par $\beta:[0,\tau_{AB}] \to \M$,  normale orient\'ee positivement et telle que $\beta(0)=A$ et $\beta(\tau_{AB})= B$ (on dira que $\beta$ est {\it admissible}). On d\'efinit la {\it fonctionnelle d'action} associ\'ee \`a $L$ entre $A$ et $B$ par 
$$S_{AB}(\C') \, (\hbox{ou } S_{AB}(\beta)) \, = \int_0^{\tau_{AB}} L(\beta(t),\beta'(t)) dt.$$    
\end{enumerate}
\end{definition}

\noindent Comme expliqu\'e plus haut, on veut que \\

\noindent \underline{Axiome 1} {\it
$$S_{AB} (p):= S_{AB}(\C) \leq S_{AB}(\C')$$
pour toute courbe $\C'$ de genre temps passant par $A$ et $B$ ou de mani\`ere \'equivalente 
$$S_{AB}(p) \leq S_{AB}(\beta)$$
pour tout param\'etrisation $\beta$ admissible.} \\

\noindent Le lagrangien d'une particule $p$ v\'erifiant l'axiome 1 n'est pas unique. Il est d\'efini \`a une diff\'erentielle totale pr\`es. Rappelons qu'une diff\'erentielle totale est une fonction $F: \M \times E \to \mR$ de la forme $F(x,\overrightarrow{v}) = df_x(\overrightarrow{v})$
 o\`u $f:\M \to \mR$.
 En effet, soit $F$ une telle fonction. Notons $L'= L +F$. 
Alors, puisque 
$$\int_0^{\tau_{AB}} F(\beta(t),\beta'(t)) dt = f(B) -f(A)$$ 
pour toute param\'etrisation $\beta$ admissible, les fonctionnelles d'action associ\'ees \`a $L$ et 
$L'$ ne diff\'erent que de la constante $f(B)-f(A)$ et l'axiome 1 est vrai
pour $L$ si et seulement si il est vrai pour $L'$.  Inversement, on a :

\begin{prop} Si  les
fonctionnelles d'action de deux lagrangiens $L$ et $L'$ diff\'erent d'une constante pour tous $A,B$,
alors $L$ et $L'$ diff\`erent d'une diff\'erentielle totale.
\end{prop}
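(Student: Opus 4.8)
\noindent Posons $G := L - L'$. L'hypoth\`ese dit exactement que, pour tous $A,B \in \M$ (avec $B$ ult\'erieur \`a $A$), la quantit\'e $\int_0^{\tau_{AB}} G(\beta(t),\beta'(t))\,dt$ ne d\'epend pas de la param\'etrisation admissible $\beta$ joignant $A$ \`a $B$, mais seulement de $A$ et $B$ ; notons $c(A,B)$ cette valeur commune. Le but est d'exhiber $f:\M\to\mR$ telle que $G(x,\overrightarrow{v})= df_x(\overrightarrow{v})$ pour tout vecteur admissible $\overrightarrow{v}$ (\ie tel que $T(\overrightarrow{v})=1$ et $\overrightarrow{v}\in E^+$), ce qui est pr\'ecis\'ement la r\'eciproque du calcul fait juste avant l'\'enonc\'e. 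Il faut garder \`a l'esprit que la contrainte $T(\beta'(t))=1$ emp\^eche de faire varier librement $\beta'$ : on ne contr\^ole $G$ que sur l'hyperplan affine des vitesses admissibles, et c'est donc uniquement sur cet hyperplan que l'on pourra, et que l'on souhaite, identifier $G$ \`a une diff\'erentielle totale.

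\noindent Fixons un observateur galil\'een et une origine $A_0$, ce qui identifie $\M$ \`a $\mR\times\ker(T)$ via $\phi_{D_{A_0}}$ ; un point s'\'ecrit $(t,x)$ avec $x \in \ker(T)$ et une vitesse admissible s'\'ecrit $(1,\dot x)$ avec $\dot x\in\ker(T)$. Posons $\Lambda(t,x,\dot x):=G\big((t,x),(1,\dot x)\big)$ ; comme toute param\'etrisation admissible v\'erifie $\dot t\equiv 1$, l'hypoth\`ese se traduit en : l'action m\'ecanique $\int \Lambda(t,x,\dot x)\,dt$ est ind\'ependante du chemin $x(\cdot)$ \`a extr\'emit\'es fix\'ees. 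La strat\'egie est de reconna\^{\i}tre $\Lambda$ comme un \emph{lagrangien nul}. Puisque l'action est constante sur tous les chemins de m\^emes extr\'emit\'es, sa premi\`ere variation est identiquement nulle, donc l'expression d'Euler--Lagrange de $\Lambda$ s'annule le long de toute courbe. En faisant varier $\ddot x$ (libre) on obtient d'abord $\pa^2\Lambda/\pa\dot x^i\,\pa\dot x^j\equiv 0$, donc $\Lambda$ est affine en $\dot x$ :
$$\Lambda(t,x,\dot x)= a_0(t,x)+\sum_i a_i(t,x)\,\dot x^i.$$
La relation d'Euler--Lagrange restante, valable pour tout $\dot x$, fournit alors, en identifiant les coefficients en $\dot x$, les \'egalit\'es $\pa_i a_j=\pa_j a_i$ et $\pa_t a_i=\pa_i a_0$, c'est-\`a-dire la fermeture de la $1$-forme $\om:=a_0\,dt+\sum_i a_i\,dx^i$ sur $\mR\times\ker(T)$.

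\noindent Par le lemme de Poincar\'e ($\mR^4$ \'etant contractile, ou en travaillant localement sur une boule), $\om$ est exacte : il existe $f$ avec $a_0=\pa_t f$ et $a_i=\pa_i f$. On en d\'eduit
$$\Lambda(t,x,\dot x)=\pa_t f+\sum_i \pa_i f\,\dot x^i = df_{(t,x)}\big((1,\dot x)\big),$$
soit exactement $G(X,\overrightarrow{v})=df_X(\overrightarrow{v})$ pour toute vitesse admissible $\overrightarrow{v}$ et tout point $X$, ce qui ach\`eve la preuve, en accord avec le calcul direct \'etabli avant l'\'enonc\'e.

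\noindent Le principal obstacle est de nature technique et tient \`a deux points. D'une part la contrainte $T(\beta')=1$ : les vitesses admissibles ne remplissent qu'un hyperplan affine, si bien qu'on ne peut esp\'erer, et qu'on n'a besoin, d'identifier $G$ \`a $df$ que sur ce dernier. D'autre part la r\'egularit\'e : la manipulation d'Euler--Lagrange ci-dessus r\'eclame $\Lambda\in C^2$, alors que $L$ n'est suppos\'e que $C^1$. Pour rester dans cette r\'egularit\'e, je pr\'ef\'ererais construire $f$ directement en posant $f(X):=\int_{A_0}^{X} G$ le long d'un segment admissible (quantit\'e bien d\'efinie par l'ind\'ependance vis-\`a-vis du chemin), puis montrer que $f$ est $C^1$ et que $df_X(\overrightarrow{v})=G(X,\overrightarrow{v})$ sur les vitesses admissibles en prolongeant le chemin par un petit segment droit de vitesse $\overrightarrow{v}$ et en appliquant le th\'eor\`eme fondamental du calcul. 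La difficult\'e d\'elicate y est de d\'eriver $f$ dans les directions \emph{transverses} \`a la contrainte, un d\'eplacement purement spatial n'\'etant pas admissible, ce que l'on contourne en comparant deux chemins admissibles issus d'un m\^eme \'ev\'enement ant\'erieur commun.
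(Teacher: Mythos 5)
Your proof is correct (modulo the regularity caveat you flag yourself) but it takes a genuinely different route from the paper's. The paper proceeds by the direct primitive construction that you only sketch as a fallback at the end: it sets $w(x)(\overrightarrow{v}) = L(x,\overrightarrow{v}) - L'(x,\overrightarrow{v})$, treats it as a differential $1$-forme, fixes a basepoint $q$, defines $f(x) = \int_c w$ along a path from $q$ to $x$ (well defined by the path-independence hypothesis), and recovers $a_i(x) = \partial_i f(x)$ by differentiating along straight coordinate segments $c_i(s) = x + s e_i$, concluding $w = df$. Your main argument instead treats $G = L - L'$ as a \emph{lagrangien nul}: vanishing of the first variation on every curve forces $\partial^2 \Lambda / \partial \dot x^i \partial \dot x^j \equiv 0$, hence affineness in $\dot x$, and the residual Euler--Lagrange identities are precisely the closedness of $\omega = a_0\, dt + \sum_i a_i\, dx^i$, after which the Poincar\'e lemma gives $\omega = df$. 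Your route buys two things the paper glosses over: it \emph{derives} the linearity (affineness) of $G$ in the velocity, which the paper silently assumes the moment it writes $w = \sum_i a_i\, dx_i$; and it is honest about the constraint $T(\beta') = 1$ --- the hypothesis only tests $G$ on the affine hyperplane of admissible velocities, so the conclusion can only be equality with $df$ there, and the paper's differentiation of $f$ along purely spatial segments $c_i$ (not admissible, since $T(e_i) = 0$ for spatial $e_i$) is exactly the transverse-direction gap you name in your last paragraph. What the paper's route buys in exchange is regularity: the direct construction needs only the stated $C^1$ hypothesis, whereas your Euler--Lagrange manipulation requires $\Lambda \in C^2$; your proposed repair (define $f(X)$ by integrating $G$ along admissible paths from a fixed earlier event, then differentiate at the endpoint, comparing two admissible paths with a common earlier origin to handle transverse directions) is essentially the paper's construction, made rigorous.
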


\begin{proof} On d\'efinit la forme diff\'erentielle  $w$ par $w(x)(\overrightarrow{v})  = L(x,\overrightarrow{v}) -L'(c,\overrightarrow{v})$. 
Par hypoth\`ese, l'int\'egrale de  $w(x)$
le long d'un chemin ne d\'epend que des extr\'emit\'es de $c$. On fixe $q \in \M$ et on d\'efinit pour $x \in \M$ la fonction $f(x) = \int_c w$ o\`u $c$ est un chemin quelconque joignant $q$ \`a $x$.
Fixons $x \in \M$ et prenons une base $(e_1,e_2,e_3,e_4)$. La forme $w$ s'\'ecrit $w= \sum_{i=1}^4 a_i dx_i$ o\`u les $dx_i$ sont les fonctions coordonn\'ees dans cette base.   Pour $t$ petit, remarquons que   
$f(x + t e_i ) = \int_c w + \int_{c_i} w$
o\`u $c_i(s) = x+se_i$ pour $s \in [0,t]$. Comme le premier terme ne d\'epend pas de $t$,  
$$\frac{d}{dt}_{t=0} f(x+te_i)= \frac{d}{dt}_{t=0} \int_O^t w(c(s)) (c'(s)) ds = \frac{d}{dt}_{t=0} \int_O^t a_i(c(s)) ds = a_i(x).$$
Cela montre que $w = df$ et que $L$ et $L'$ diff\`erent d'une diff\'erentielle totale. \\
\end{proof}

\noindent   Soit $D_A$ un observateur galil\'een
  muni d'une origine $A$ (voir paragraphe \ref{modelisation_mc} du chapitre \ref{modelisation}). On a vu que $D_A$ d\'eterminait de mani\`ere naturelle un isomorphisme entre $\M$ et $\mR \times \ker(T)$ en \'ecrivant, pour tout point $M \in \M$ 
  $$\overrightarrow{AM} = \overrightarrow{k} + a i_D$$
  o\`u $a \in \mR$, $\overrightarrow{k} \in \ker(T)$ et o\`u $i_D$ est le vecteur unitaire orient\'e positivement qui dirige $D$.   
  
\begin{definition} Soient $L$ un lagrangien d'une particule $p$ dans un syst\`eme physique. Le {\it lagrangien de $p$ vu par $D_A$ et associ\'e \`a $L$} est donn\'e par 
\[ L_{D_A} : \left| \begin{array}{ccc} 
\mR\times  \ker(T) \times \ker(T) & \to &\mR   \\
(t,x,\overrightarrow{v}) & \mapsto & L(M,\overrightarrow{v} + i_D) 
\end{array} \right. \]
o\`u $M$ est tel que $\overrightarrow{AM}= \overrightarrow{k} + t i_D$.
\end{definition}

\noindent Soit $\beta:[O,\tau_{AB}] \to \M$ une param\'etrisation admissible entre $A$ et un autre point $B$ de $D$. On peut lui associer 
\[ \tilde{\beta} \left| \begin{array}{ccc}
 [0,\tau_{AB}] & \to&  \mR \times \ker(T) \\
 t & \mapsto & (\alpha(t) , \overrightarrow{k}(t))
 \end{array} \right. \]
o\`u comme dans la d\'efinition ci-dessus, $\overrightarrow{A\beta(t)}= \overrightarrow{k} + \alpha(t) i_D$. Remarquons que comme $T(\beta'(t)) = 1$, on a $T(\alpha'(t) i_D) = 1$ et $\alpha'(t)= 1$. Comme de plus $\alpha(0) = 0$ (puisque $\beta(0)= A$), on a $\alpha(t) = t$.  
Ainsi $\tilde{\beta}(t)= (t,\overrightarrow{k})$. 
De cette mani\`ere, $\overrightarrow{k}'(t) = \overrightarrow{v}_{\beta/D}$
(voir le point 1 de la Remarque \ref{rem_vitesse}). On a aussi $\overrightarrow{k}'(t) = \beta'(t) - i_D$. On en d\'eduit que 
$$L_{D_A}(t,\overrightarrow{k}(t),\overrightarrow{k}'(t)) = L(\beta(t), \beta'(t)).$$
Cela justifie cette d\'efinition d'autant que si on pose,
$$\tilde{S}_{AB}(\tilde{\beta})= \int_0^{\tau_{AB}} L_{D_A}(t,\overrightarrow{k}(t),\overrightarrow{k}'(t)) dt$$
 la courbe $\tilde{c}$ associ\'ee \`a la courbe $c$ param\'etrant la courbe $\C$ de $p$ minimise la fonctionnelle 
 $\tilde{S}_{AB}$ parmi tous les $\tilde{\beta}$. \\

\noindent On a maintenant le r\'esultat suivant (voir Avez, calcul diff\'erentiel)

\begin{theorem} \label{eq_euler}
Soit $F: \mR\times \ker(T) \times \ker(T) \to \mR$ une fonction $C^2$. Supposons qu'une courbe $k: [t_1,t_2] \to \ker(T)$ minimise 
$$S(k) = \int_{t_1}^{t_2} F(t,k(t),k'(t)) dt$$
parmi toutes les courbes normales orient\'ees positivement,
 alors on a 
\begin{eqnarray} \label{euler_eq} 
d_xF_{(t,k(t),k'(t))} = d/dt \left(d_v F_{(t,k(t),k'(t))} \right)
\end{eqnarray}
 o\`u $d_x$ et $d_v$ repr\'esentent respectivement les diff\'erentielles partielles relativement aux deuxi\`eme et troisi\`eme variables. 
 \end{theorem}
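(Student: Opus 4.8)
This statement is the classical first-order necessary condition (the Euler--Lagrange equation) for a minimiser of a one-dimensional variational integral, so the plan is to run the standard perturbation argument, adapted to the present vector-valued setting where the unknown takes values in $\ker(T)$.

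First I would pin down the competitor class. The minimiser $k$ is compared with nearby curves $k_s := k + s\,\xi$, where $s \in \mR$ is a parameter and $\xi : [t_1,t_2] \to \ker(T)$ is an arbitrary smooth variation with $\xi(t_1) = \xi(t_2) = 0$, so that every $k_s$ has the same endpoints as $k$. It is worth noting at the outset that each such $k_s$ is automatically admissible: lifting it to $\beta_s$ by $\overrightarrow{A\beta_s(t)} = k_s(t) + t\, i_D$ gives $\beta_s'(t) = k_s'(t) + i_D$ with $k_s'(t) \in \ker(T)$, whence $T(\beta_s'(t)) = T(i_D) = 1$ and $\beta_s'(t) \in E^+$ whatever the value of $s$. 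Thus the requirements ``normale, orient\'ee positivement'' are inherited for free, and — contrary to what one might fear — the constraint is \emph{not} the source of any difficulty.

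Next, set $\phi(s) := S(k_s) = \int_{t_1}^{t_2} F(t, k_s(t), k_s'(t))\, dt$. Since $k = k_0$ minimises $S$ over the admissible curves, $\phi$ attains a minimum at $s=0$, so $\phi'(0) = 0$. Differentiating under the integral sign — legitimate because $F$ is $C^2$ and the integration runs over the compact interval $[t_1,t_2]$ — and using the chain rule gives
$$\phi'(0) = \int_{t_1}^{t_2} \Big( d_x F_{(t,k,k')}(\xi) + d_v F_{(t,k,k')}(\xi') \Big)\, dt.$$
An integration by parts on the second term, with the boundary contribution killed by $\xi(t_1) = \xi(t_2) = 0$, recasts this as
$$\phi'(0) = \int_{t_1}^{t_2} \Big( d_x F_{(t,k,k')} - \frac{d}{dt}\, d_v F_{(t,k,k')} \Big)(\xi)\, dt = 0.$$
As this vanishes for every admissible $\xi$, the fundamental lemma of the calculus of variations forces the bracketed quantity to vanish identically, which is precisely \eref{euler_eq}.

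The one genuinely delicate point — and thus the step I would treat most carefully — is regularity. The integration by parts above presupposes that $t \mapsto d_v F_{(t,k(t),k'(t))}$ is differentiable, which does not follow from $k \in C^1$ and $F \in C^2$ alone. The clean remedy is the du Bois-Reymond device: perform the integration by parts on the \emph{other} term instead, throwing no derivative onto $d_v F$, and conclude that $d_v F_{(t,k,k')}$ differs from $\int_{t_1}^{t} d_x F_{(\tau,k,k')}\, d\tau$ by a constant vector of $\ker(T)$; this \emph{a posteriori} shows the map is $C^1$ with derivative $d_x F$, recovering \eref{euler_eq} with no prior $C^2$ hypothesis on $k$. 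If one is content to assume $k \in C^2$ from the start, the elementary argument of the previous paragraph applies verbatim.
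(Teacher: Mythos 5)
Your proof is correct. Note first that the paper itself contains no proof of this statement: it simply refers the reader to Avez's \emph{Calcul diff\'erentiel}, so there is no internal argument to compare against; what you have supplied is precisely the standard argument the citation points to. Your execution is sound on the two points that actually require care in this setting. The admissibility check is right: since $k_s'(t) \in \ker(T)$, one has $T(k_s'(t) + i_D) = T(i_D) = 1$ (the cross term $b(k_s', i_D)$ vanishes because $\ker(T)$ is the radical of $b$ in the signature-$(+,0,0,0)$ setting), and $k_s' + i_D$ lies in $E^+$ because adding an element of the isotropic hyperplane does not change the connected component; so the constraint ``normale, orient\'ee positivement'' is indeed vacuous for variations of the form $k + s\xi$, and the minimisation is effectively unconstrained apart from the fixed endpoints. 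Your du Bois-Reymond remark is also well taken and correctly executed: with $k$ only assumed $C^1$, the naive integration by parts is illegitimate, and integrating by parts on the $d_xF$ term instead yields $\int_{t_1}^{t_2} \bigl( d_v F_{(t,k,k')} - \int_{t_1}^{t} d_x F_{(\tau,k,k')}\, d\tau \bigr)(\xi'(t))\, dt = 0$ for all endpoint-vanishing $\xi$, whence $d_v F_{(t,k,k')}$ equals that primitive plus a constant, is therefore $C^1$, and \eref{euler_eq} follows with no a priori $C^2$ hypothesis on $k$ --- a genuine strengthening over the version implicitly assumed in the paper's surrounding discussion, where the equation is applied to curves whose regularity is taken for granted.
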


\noindent Ce th\'eor\`eme calcule l'\'equation d'Euler d'un minimiseur de la fonctionnelle d'action et fournit ainsi une \'equation diff\'erentielle dont les solutions donnent les trajectoires des particules. \\

\noindent On va maintenant d\'efinir l'{\bf \'energie} et l'{\bf impulsion}
d'une particule dans un syst\`eme physique vu par un observateur $D_A$. Dans le premier chapitre, on explique bri\`eve--ment leur interpr\'etation physique. Par ailleurs, les lois physiques donn\'ees dans ce m\^eme chapitre impliquent que ces grandeurs doivent \^etre constantes avec le temps. D'un point de vue math\'ematique, ce sont des int\'egrales premi\`eres du syst\`eme. On gardera ce point de vue math\'ematique ici. On verra aussi que leur d\'efinition impose des conditions tr\`es restrictives mais qui seront remplies pour le cas d'une particule dans le vide. On constatera au final que les r\'esultats trouv\'es correspondent \`a ceux qui avaient \'et\'e obtenus par des intuitions physiques au Chapitre \ref{raisonnements}. 

\begin{prop} \label{energie_def}
Avec les m\^emes notations que ci-dessus, on suppose que $L_{D_A}$ ne d\'epend pas de la premi\`ere variable. Soit $c:[0,\tau_{AB}] \to \M$ l'unique courbe admissible qui param\`etre la courbe $\C$ de la particule. Notons $\tilde{c}:[0,\tau_{AB}] \to \mR \times \ker(T)$, $\tilde{c(t)} = (t,\overrightarrow{k}(t))$ la courbe associ\'ee vue par l'observateur $D_A$ (voir ci-dessus). Alors le nombre 
$$\E(t) = (d_v L_{D_A})_{(\overrightarrow{k}(t),\overrightarrow{k}'(t))}(\overrightarrow{k}'(t)) - L_{D_A} (\overrightarrow{k}(t),\overrightarrow{k}'(t))$$
est constant. On l'appelle l'{\bf \'energie de la particule vue par $D_A$}.
\end{prop}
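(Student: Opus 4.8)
The plan is to prove that $\E(t)$ is constant by showing directly that $\frac{d}{dt}\E(t) = 0$, the computation being a version of the conservation of energy for a time-independent Lagrangian. Since by hypothesis $L_{D_A}$ does not depend on its first variable, I would write $F := L_{D_A} : \ker(T) \times \ker(T) \to \mR$ and recall that the curve $\overrightarrow{k}(t)$ parametrizing the trajectory minimizes the action, hence satisfies the Euler--Lagrange equation \eref{euler_eq} of Theorem \ref{eq_euler}. In the present time-independent setting this reads $d_x F_{(\overrightarrow{k},\overrightarrow{k}')} = \frac{d}{dt}\big( d_v F_{(\overrightarrow{k},\overrightarrow{k}')} \big)$, an equality of linear forms on $\ker(T)$ holding along the trajectory.

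The core step is to differentiate the two pieces of $\E(t) = (d_v F)_{(\overrightarrow{k},\overrightarrow{k}')}(\overrightarrow{k}') - F(\overrightarrow{k},\overrightarrow{k}')$ separately. For the second piece, the chain rule gives $\frac{d}{dt} F(\overrightarrow{k},\overrightarrow{k}') = (d_x F)(\overrightarrow{k}') + (d_v F)(\overrightarrow{k}'')$, where all partial differentials are evaluated at $(\overrightarrow{k}(t),\overrightarrow{k}'(t))$. For the first piece, I would treat $(d_v F)_{(\overrightarrow{k},\overrightarrow{k}')}$ as a covector-valued function of $t$ evaluated on the vector $\overrightarrow{k}'(t)$, so the Leibniz rule yields $\frac{d}{dt}\big[(d_v F)(\overrightarrow{k}')\big] = \big[\frac{d}{dt}(d_v F)\big](\overrightarrow{k}') + (d_v F)(\overrightarrow{k}'')$.

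Subtracting these, the two terms $(d_v F)(\overrightarrow{k}'')$ cancel, leaving $\frac{d}{dt}\E = \big[\frac{d}{dt}(d_v F)\big](\overrightarrow{k}') - (d_x F)(\overrightarrow{k}')$. The Euler--Lagrange equation says precisely that $\frac{d}{dt}(d_v F)$ and $d_x F$ coincide as linear forms, so evaluating both on the same vector $\overrightarrow{k}'$ gives equal scalars and $\frac{d}{dt}\E = 0$; hence $\E$ is constant, which is the claim.

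The computation is routine, and I do not expect a genuine obstacle, only a point of care: one must keep track of $d_v F$ as a linear-form-valued function of $t$ (through its dependence on $(\overrightarrow{k}(t),\overrightarrow{k}'(t))$), so that the Leibniz rule and the Euler--Lagrange equation are applied to one and the same covector-valued object. It is worth emphasizing where the time-independence hypothesis enters: it is exactly what suppresses the $\partial_t F$ contribution that would otherwise appear when differentiating $F$ along the curve and would break the cancellation, so without it $\E$ need not be conserved.
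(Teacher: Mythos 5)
Your proof is correct and follows essentially the same route as the paper: differentiate $\E(t)$, expand $\frac{d}{dt}\big[(d_v L_{D_A})(\overrightarrow{k}')\big]$ and $\frac{d}{dt} L_{D_A}(\overrightarrow{k},\overrightarrow{k}')$ by the Leibniz and chain rules, cancel the $(d_v L_{D_A})(\overrightarrow{k}'')$ terms, and invoke the Euler--Lagrange equation \eref{euler_eq} to conclude $\frac{d}{dt}\E = 0$. Your closing remark on where time-independence enters (suppressing the $\partial_t$ contribution) is a sound clarification that the paper leaves implicit.
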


\noindent Insistons encore une fois sur le fait que cette proposition-d\'efinition n'a de sens
que si $L_{D_A}$ ne d\'epend pas de la premi\`ere variable. Notons aussi
que si tel est le cas, il n'y a aucune raison que cette hypoth\`ese soit
vraie si on change d'observateur galil\'een. Pour comprendre ce qui se
passe physiquement, imaginons qu'un observateur \'etudie une particule dans
le vide. L'\'energie de cette particule est la somme de son \'energie au
repos (qui est suppos\'ee nulle en m\'ecanique classique) et de son
\'energie cin\'etique. Supposons que  cet observateur soit en mouvement
irr\'egulier par rapport \`a la particule. Imaginons par exemple qu'il soit
soumis \`a des forces \'electromagn\'etiques et que la particule soit
neutre \'electriquement. Alors, l'observateur va mesurer une \'energie pour
la particule qui est non constante dans le temps (elle va d\'ependre de la
vitesse de la particule par rapport \`a l'observateur).  Pour avoir une
bonne d\'efinition d'\'energie, il faut  que l'observateur soit d'une
certaine mani\`ere li\'e au syst\`eme.  \\

\begin{proof} 
On a en utilisant l'\'equation \eref{euler_eq}

\begin{eqnarray*} 
\begin{aligned} 
\frac{d}{dt} \E(t)  &  =   \left( (d_x L_{D_A})_{(\overrightarrow{k}(t),\overrightarrow{k}'(t))}(\overrightarrow{k}'(t)) \right) \\
&  + (d_v L_{D_A})_{(\overrightarrow{k}(t),\overrightarrow{k}'(t))}(\overrightarrow{k}''(t))  - \frac{d}{dt} \left( L_{D_A}(\overrightarrow{k}(t), \overrightarrow{k}'(t)) \right).
\end{aligned}
\end{eqnarray*}
Or 
$$\frac{d}{dt} \left( L_{D_A}(\overrightarrow{k}(t), \overrightarrow{k}'(t)) \right) = (d_x L_{D_A})_{(\overrightarrow{k}(t),\overrightarrow{k}'(t))}(\overrightarrow{k}'(t))  
 + (d_v L_{D_A})_{(\overrightarrow{k}(t),\overrightarrow{k}'(t))}(\overrightarrow{k}''(t)).$$
D'o\`u $\frac{d}{dt} \E(t)= 0$, ce qui prouve la proposition. 
\end{proof}

\noindent De m\^eme, on d\'efinit l'impulsion de la mani\`ere suivante:

\begin{prop} \label{impulsion_def}
On utilise les m\^emes notations que dans la proposition pr\'ec\'edente mais cette fois, on suppose que $L_{D_A}$ ne d\'epend pas des deux premi\`eres variables. Alors
$$\overrightarrow{P}= (d_v L_{D_A})_{\overrightarrow{k}'(t)} \in (\ker (T))^* \sim \ker(T)$$
est un vecteur constant que l'on appelle {\bf impulsion de la particule vue par $D_A$}
\end{prop}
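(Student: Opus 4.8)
The plan is to mimic the proof of Proposition~\ref{energie_def}, replacing the invariance in the time variable by the invariance in the space variable. The curve $\tilde c(t)=(t,\overrightarrow{k}(t))$ minimizes the action functional $\tilde S_{AB}$ among admissible curves, so Theorem~\ref{eq_euler} applies with $F=L_{D_A}$ and gives the Euler--Lagrange equation
$$\left(d_x L_{D_A}\right)_{(\overrightarrow{k}(t),\overrightarrow{k}'(t))} = \frac{d}{dt}\left( \left(d_v L_{D_A}\right)_{(\overrightarrow{k}(t),\overrightarrow{k}'(t))}\right).$$
This is the same starting point as in the energy computation, the only difference being which partial differential we will declare to vanish.

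First I would observe that the hypothesis — $L_{D_A}$ does not depend on its first two variables — means in particular that the partial differential $d_x L_{D_A}$ with respect to the position variable vanishes identically. Substituting $d_x L_{D_A}=0$ into the Euler--Lagrange equation immediately yields
$$\frac{d}{dt}\left( \left(d_v L_{D_A}\right)_{\overrightarrow{k}'(t)}\right)=0,$$
that is, the covector $(d_v L_{D_A})_{\overrightarrow{k}'(t)}\in(\ker T)^*$ is constant along the trajectory. This is exactly the assertion that $\overrightarrow{P}$ is conserved, and the argument is therefore essentially a one-line consequence of Theorem~\ref{eq_euler}.

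I do not expect a genuine obstacle here; the only point that deserves care is the identification $(\ker T)^*\sim\ker(T)$ used to view $\overrightarrow{P}$ as a vector rather than as a covector. This identification is furnished by the fixed scalar product $g$ on $\ker(T)$, which does not depend on $t$, so differentiation in $t$ commutes with it and constancy of the covector is equivalent to constancy of the associated vector. One should also note, exactly as emphasised after Proposition~\ref{energie_def}, that the conclusion is meaningful only because the stronger invariance hypothesis (independence of both $t$ and the position variable) holds for the chosen observer $D_A$; this spatial translation invariance is precisely what produces a conserved momentum, by the same mechanism that produced the conserved energy $\E$ from invariance in time.
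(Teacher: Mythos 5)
Votre preuve est correcte et suit exactement la d\'emarche du texte : le papier ne r\'edige pas la d\'emonstration (il dit simplement \og de m\^eme \fg{} apr\`es la Proposition \ref{energie_def}), et l'argument sous-entendu est pr\'ecis\'ement le v\^otre, \`a savoir que l'\'equation d'Euler \eref{euler_eq} jointe \`a $d_x L_{D_A}=0$ donne $\frac{d}{dt}\left((d_v L_{D_A})_{\overrightarrow{k}'(t)}\right)=0$. Votre remarque sur l'identification $(\ker T)^* \sim \ker(T)$ via le produit scalaire fixe $g$, qui commute avec la d\'erivation en $t$, est \'egalement conforme \`a ce que le texte signale apr\`es l'\'enonc\'e.
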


\noindent Notons que dans l'\'enonc\'e ci-dessus, l'identification entre $\ker(T)$ et $\ker(T)^*$ est donn\'ee par le produit scalaire $g$ (voir Paragraphe \ref{modelisation_mc}). Encore une fois, les conditions extr\^emement restrictives d'application de la proposition (d\'ependance de $L_{D_A}$ de la troisi\`eme variable uniquement) seront v\'erifi\'ees dans le cas d'une particule dans le vide.  

\begin{remark} Les d\'efinitions ci-dessus ne sont pas tout \`a fait
  rigoureuses.
 En effet, on a vu qu'un lagrangien \'etait d\'efinie \`a une
 diff\'erentielle totale pr\`es. Si maintenant on remplace $L$ par $L+F$
 o\`u $F$ est une diff\'erentielle totale, 
on va trouver une nouvelle \'energie (et impulsion) qui seront les m\^emes
que celles trouv\'ees avec $L$ mais auxquelles on aura ajout\'e une
constante. On verra que pour obtenir un mod\`ele physique r\'ealiste, 
il faudra que le lagrangien d\'epende de la masse de la particule. Il y
aura alors un seul choix de constante possible pour que l'\'energie et l'impulsion d'une particule de masse nulle soient nulles. 
\end{remark}

\noindent {\bf Exemple d'une particule dans le vide.} 

\noindent Comme expliqu\'e ci-dessus, on pourra d\'efinir l'\'energie et l'impulsion de la particule pour tout observateur galil\'een. 

\noindent Dans ce cas pr\'ecis, on consid\`ere une particule $p=(\C,m)$ et on va se donner 

\noindent \underline{Axiome 2} {\it un lagrangien de $p$ est invariant par les isom\'etries de $\M$ i.e. pour toute isom\'etrie $\phi:\M \to \M$ et pour tous $A,B \in M$, il existe $c \in \mR$ tel que pour tout $\beta$ admissible, on ait $S_{AB}(\phi\circ \beta)= S_{AB}(\beta)$.} \\

\noindent Cet  axiome traduit le fait  que physiquement, il n'y a pas de direction privil\'egi\'ee dans l'univers et qu'une particule que l'on "bouge" par une isom\'etrie (position et vitesse) \`a un instant donn\'e a une trajectoire qui est "boug\'ee" de la m\^eme mani\`ere (i.e. par la m\^eme isom\'etrie). Alors on montre 

\begin{theorem} \label{part_vide}
En consid\'erant les  axiomes $1$ et $2$, il existe dans $\bar{L}$ (classe des lagrangiens d\'efinis \`a une diff\'erentielle totale pr\`es) un lagrangien $L_0$ tel que pour tout $\overrightarrow{v}$ unitaire ($T(\overrightarrow{v})=1$) et orient\'e positivement 
$$L_0(x,\overrightarrow{v}) = a \| \overrightarrow{v} -i_D \|^2$$
o\`u $i_D$ est le vecteur directeur unitaire orient\'e positivement d'un observateur galil\'een fix\'e $D$. 
\end{theorem}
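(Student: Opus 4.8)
Le plan est de transformer l'Axiome~2 en un \'enonc\'e d'invariance de la classe $\bar L$. D'apr\`es la proposition pr\'ec\'edente, dire que la fonctionnelle d'action est pr\'eserv\'ee \`a une constante pr\`es par toute isom\'etrie $\phi$ de $\M$ (de partie lin\'eaire $\Lambda$) \'equivaut \`a dire que $L^\phi(x,\overrightarrow v):=L(\phi(x),\Lambda\overrightarrow v)$ et $L$ diff\`erent d'une diff\'erentielle totale. Autrement dit, $\bar L$ est invariante sous le groupe des isom\'etries de $\M$, dont la partie lin\'eaire pr\'eserve $T$ et $g$~: c'est le groupe de Galil\'ee, engendr\'e par les translations, les rotations spatiales $R\in O(g)$ fixant $i_D$, et les boosts $i_D\mapsto i_D+\overrightarrow w$ (avec $\Lambda$ triviale sur $\ker(T)$). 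Je travaille d\`es lors avec la vitesse $\overrightarrow u:=\overrightarrow v-i_D\in\ker(T)$ relative \`a $D$ et je pose $\hat L(x,\overrightarrow u):=L(x,i_D+\overrightarrow u)$~; le but est d'exhiber dans $\bar L$ un repr\'esentant de la forme $a\|\overrightarrow u\|^2$.

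Les \'etapes, dans l'ordre, sont des variantes de l'argument classique de Landau--Lifchitz. Premi\`erement l'homog\'en\'eit\'e~: les translations ayant une partie lin\'eaire triviale, l'invariance donne que $\overrightarrow u\mapsto\hat L(x+\overrightarrow w,\overrightarrow u)-\hat L(x,\overrightarrow u)$ est \emph{affine} en $\overrightarrow u$ (une diff\'erentielle totale est lin\'eaire en $\overrightarrow v$), donc $\mathrm{Hess}_{\overrightarrow u}\hat L$ ne d\'epend pas de $x$. Deuxi\`emement l'invariance galil\'eenne~: un boost agissant par $\overrightarrow u\mapsto\overrightarrow u+\overrightarrow w$, le m\^eme raisonnement donne que $\mathrm{Hess}_{\overrightarrow u}\hat L$ est invariante par translation en $\overrightarrow u$, donc ind\'ependante de $\overrightarrow u$. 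En combinant, cette hessienne est une forme bilin\'eaire sym\'etrique constante $Q$ sur $\ker(T)$, si bien qu'en int\'egrant $\hat L(x,\overrightarrow u)=\frac{1}{2}Q(\overrightarrow u,\overrightarrow u)+\phi(x)+\alpha_x(\overrightarrow u)$ avec $\alpha_x$ lin\'eaire. Troisi\`emement l'isotropie~: pour $R\in O(g)$, la quantit\'e $\frac{1}{2}Q(R\overrightarrow u,R\overrightarrow u)-\frac{1}{2}Q(\overrightarrow u,\overrightarrow u)$ doit \^etre affine en $\overrightarrow u$, donc nulle~; ainsi $Q$ est $O(g)$-invariante, et comme les seules telles formes quadratiques sur $(\ker(T),g)$ sont les multiples de $\|\cdot\|^2$, on obtient $\frac{1}{2}Q(\overrightarrow u,\overrightarrow u)=a\|\overrightarrow u\|^2$ pour un certain $a\in\mR$.

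Il reste \`a \'eliminer le terme affine $\phi(x)+\alpha_x(\overrightarrow u)$. Une forme lin\'eaire constante en $\overrightarrow u$ est la diff\'erentielle totale de la forme lin\'eaire correspondante sur $\M$, et une constante $\ell_0$ est la diff\'erentielle totale de $\ell_0\,t$ restreinte aux vecteurs normaux~; plus g\'en\'eralement, la d\'ependance r\'esiduelle en $x$ d\'efinit une $1$-forme sur $\M$ dont l'invariance par translation force la fermeture, donc l'exactitude par le lemme de Poincar\'e ($\M$ \'etant affine, donc contractile). Ce reste \'etant une diff\'erentielle totale, le repr\'esentant $L_0(x,\overrightarrow v)=a\|\overrightarrow v-i_D\|^2$ appartient bien \`a $\bar L$. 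Enfin l'Axiome~1 intervient via l'\'equation d'Euler du Th\'eor\`eme~\ref{eq_euler}~: appliqu\'ee \`a $L_0$ elle donne $\frac{d}{dt}(2a\,\overrightarrow u)=0$, c'est-\`a-dire des trajectoires \`a vitesse constante (les droites), et la minimalit\'e impose $a>0$.

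Le point d\'elicat est la comptabilit\'e des diff\'erentielles totales dans l'\'etape d'homog\'en\'eit\'e. Le groupe des translations n'\'etant pas compact, on ne peut pas moyenner pour fabriquer directement un repr\'esentant ind\'ependant de $x$~: il faut v\'erifier que le terme affine r\'esiduel, dont seuls les coefficients d\'ependent de $x$, provient effectivement d'une fonction $f:\M\to\mR$ (la condition d'int\'egrabilit\'e ci-dessus). Une fois cette \'etape franchie et la r\'egularit\'e $C^2$ de $L$ acquise, les deux autres \'etapes sont purement alg\'ebriques (argument de hessienne constante, puis th\'eorie des invariants de $O(g)$ en dimension~$3$) et ne pr\'esentent pas de difficult\'e.
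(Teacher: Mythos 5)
Le texte de l'article ne contient aucune d\'emonstration de ce th\'eor\`eme (l'auteur \'ecrit explicitement qu'elle ``n'est pas \'evidente du tout et sera omise ici'') ; il n'y a donc pas de preuve de r\'ef\'erence \`a laquelle comparer la v\^otre. Votre sch\'ema est l'argument standard \`a la Landau--Lifchitz et son architecture est la bonne : traduction de l'Axiome 2 en invariance de la classe $\bar{L}$ via la proposition sur les diff\'erentielles totales, hessienne en $\overrightarrow{u}$ ind\'ependante de $x$ (translations) puis de $\overrightarrow{u}$ (boosts) --- et vous avez raison de noter que la r\'egularit\'e $C^2$, non suppos\'ee dans le texte, se remplace par un argument de diff\'erences secondes --- enfin invariants de $O(g)$ en dimension $3$ pour obtenir $a\|\overrightarrow{u}\|^2$. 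L'identification du reste affine \`a une $1$-forme $\om$ sur $\M$ est \'egalement l\'egitime : deux $1$-formes qui co\"{\i}ncident sur la tranche affine $i_D + \ker(T)$ sont \'egales, puisque cette tranche engendre $E$.

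Il y a en revanche une erreur dans l'\'etape que vous signalez vous-m\^eme comme d\'elicate : l'invariance par translation \emph{ne force pas} la fermeture de $\om$. Contre-exemple : sur $\mR^2$, pour $\om = x^1\, dx^2$, le tir\'e en arri\`ere de $\om$ par la translation de vecteur $\overrightarrow{w}$ diff\`ere de $\om$ de $w^1\, dx^2 = d(w^1 x^2)$, qui est exacte ; la classe de $\om$ modulo formes exactes est donc invariante par toutes les translations alors que $d\om = dx^1 \wedge dx^2 \not= 0$. Ce que les translations donnent r\'eellement, c'est seulement que $d\om$ est une $2$-forme \emph{constante}. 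Pour conclure, il faut invoquer la partie homog\`ene du groupe, dont vous disposez d\'ej\`a : pour une isom\'etrie $\phi$ de partie lin\'eaire $\Lambda$, l'exactitude de $\phi^*\om - \om$ donne $\Lambda^*(d\om) = d\om$ ; en \'ecrivant la $2$-forme constante $d\om = \theta \wedge \al + \be$, o\`u $\theta$ est la forme lin\'eaire ``temps'' associ\'ee \`a la d\'ecomposition $E = \mR\, i_D \oplus \ker(T)$, avec $\al \in (\ker T)^*$ et $\be \in \Lambda^2(\ker T)^*$, l'invariance sous les rotations spatiales impose $\al = 0$ et $\be = 0$ (il n'existe ni covecteur ni $2$-forme non nuls invariants sous $SO(3)$ en dimension $3$), d'o\`u $d\om = 0$ puis l'exactitude par le lemme de Poincar\'e. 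La r\'eparation est donc imm\'ediate avec les invariances dont vous disposez, mais la justification telle qu'\'ecrite (``l'invariance par translation force la fermeture'') est fausse. Point mineur pour finir : l'Axiome 1 ne donne que $a \geq 0$ et non $a > 0$ --- le texte observe d'ailleurs, juste apr\`es l'\'enonc\'e, que $a = 0$ est compatible avec les Axiomes 1 et 2 ; l'\'enonc\'e du th\'eor\`eme ne contraint de toute fa\c{c}on pas le signe de $a$.
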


\noindent La d\'emonstration de ce r\'esultat n'est pas \'evidente du tout et sera omise ici. 

\begin{remark}
\noindent \,

\begin{enumerate}
\item Dans l'\'enonc\'e ci-dessus, $\| \cdot\|$ est la norme associ\'ee \`a $g$ (voir Paragraphe \ref{modelisation_mc}). La d\'efinition a bien un sens car $\overrightarrow{v} - i_D \in \ker(T)$. 
\item La forme de $L_0$ n'est donn\'e que pour des vecteurs unitaires mais c'est \`a ces vecteurs que l'on applique $L_0$.  
\item Le th\'eor\`eme dit "il existe $D$ tel que ..." mais en fait, l'observateur $D$ peut \^etre choisi arbitrairement. En effet, si dans la d\'efinition de $L_0$, on remplace $i_{D}$ par $i_{D'}$ ($D'$ \'etant un autre observateur galil\'een), on obtient un lagrangien $L_0'$ qui diff\`ere de $L_0$ par une diff\'erentielle totale. 
\end{enumerate}
\end{remark}

\noindent Dans ce th\'eor\`eme, on peut a priori prendre $a=0$ i.e. $L =0$ et les axiomes $1$ et $2$ sont bien v\'erifi\'es mais toute trajectoire est alors minimisante ce qui ne correspond pas \`a la r\'ealit\'e physique. On va poser $a = \frac{m}{2}$. Fixons maintenant un observateur $D$. Pour simplifier, prenons celui que l'on a choisi dans le Th\'eor\`eme  \ref{part_vide}. On a alors par d\'efinition $L_{D_A}(t,x,\overrightarrow{v}) = \frac{m}{2} \| \overrightarrow{v} \|^2$ qui ne d\'epend ni de $t$ ni de $x$. L'\'energie de la particule est donn\'ee par (on conserve les notations utilis\'ees lorsqu'on a d\'efini l'\'energie)  
$$
\E = (d_vL_{D_A})_{\overrightarrow{k}'(t)}( \overrightarrow{k}'(t)) -   \frac{m}{2} \| \overrightarrow{k}'(t) \|^2$$
et puisque $(d_v \| \overrightarrow{v} \|^2)_{\overrightarrow{k}'(t)} (\overrightarrow{k}'(t) ) = 2 \| k'(t) \|^2$, on trouve 
$$\E= \frac{m}{2}  \| \overrightarrow{k}'(t) \|^2.$$
On remarque que $\overrightarrow{k}'(t)$ repr\'esente la vitesse de la particule par rapport \`a $D$. Ainsi, On trouve que $E$ est \'egale \`a l'\'energie cin\'etique de la particule au sens habituel ($1/2 m v^2$).
Comme on l'a expliqu\'e plus haut, si on prend un autre lagrangien dans la m\^eme classe, on va trouver la m\^eme valeur de l'\'energie plus une constante. On fixe cette constante \`a $0$ pour que l'\'energie d'une particule de masse nulle soit nulle. 

De la m\^eme mani\`ere on trouve que l'impulsion est donn\'ee par 
$$\overrightarrow{P}= m \overrightarrow{k}'(t).$$ 
La trajectoire de la particule minimise la fonctionnelle d'action. La Proposition \ref{impulsion_def} nous dit alors que le vecteur vitesse $\overrightarrow{k}'(t)$ de la particule par rapport \`a $D$ doit \^etre constant. Puisque $D$ est arbitraire, on en d\'eduit que la trajectoire de la particule est une droite. \\

\noindent Lorsque le syst\`eme physique consid\'er\'e est compos\'e de $n$ particules, l'axiome $2$ ne permet plus de conclure. En fait, on postulera directement la valeur du lagrangien d'une particule pour retrouver la loi de Newton:\\

\noindent \underline{Axiome 2'}
{\it  Le lagrangien d'une particule $p_1= (\C_1,m_1)$ dans un syst\`eme physique $p_i= (\C_i,m_i)$ ($i \in \{1,\cdots,n\}$) est donn\'e par 
 
 $$L_1(x,\overrightarrow{v}) = \frac{m_1}{2} \|v-i_D \|^2 - k \sum_{i=2}^n \frac{m_i}{d(M,\C_i)}$$
o\`u $i_D$ est le vecteur unitaire positivement orient\'e d'un observateur galil\'een fix\'e $D$, o\`u $k$ est la constante de gravitation et o\`u $d(M,\C_i)$ est la distance de $M$ au point $M_i$ de $\C_i$ qui est simultan\'e \`a $M$.} \\

\noindent On remarque que ce lagrangien est en gros l'\'energie cin\'etique de $p_1$ moins l'\'energie potentielle des autres particules.

%%%%%%%%%%%%%%%%%%%%%%%%%%%%%%%%%%%%%%%%%%%%%%%%%%%%%%%
\section{En relativit\'e restreinte} \label{matiere_rr}

%%%%%%%%%%%%%%%%%%%%%%%%%%%%%%%%%%%%%%%%%%%%%%%%%%%%%%
\subsection{Point de vue du lagrangien pour une particule dans le vide}\label{rr_lagrangien} 

\noindent {\it Ce paragraphe a pour but de calculer l'\'energie et l'impulsion d'une particule dans le vide en relativit\'e restreinte. Ce sera un bon point de d\'epart pour la th\'eorie de la  relativit\'e g\'en\'erale du fait que, comme on le verra, on fera l'hypoth\`ese que les particules sont ind\'ependantes les unes des autres : chacune se comportera comme une particule dans le vide. La lecture de ce paragraphe n'est pas indispensable pour comprendre d'o\`u vient l'\'equation d'Einstein.  }

\noindent La loi de Newton pose de nombreux probl\`emes. Simplement par son
\'enonc\'e, il y a interaction entre particules et de mani\`ere
sous-jacente, il y a le probl\`eme de la simultan\'eit\'e.  Notamment (il
faut essayer pour s'en convaincre), la loi de Newton am\`ene \`a
consid\'erer des vitesses plus grandes que celle de la lumi\`ere : en effet, si un objet change de place, son potentiel newtonnien est trnasform\'e en cons\'equence et son influence sur l'univers tout entier est instantan\'ement modifi\'e. L'information a donc \'et\'e transmise avec une vitesse infinie. Pour une seule particule dans le vide, le
principe lagrangien ne s'appuie pas sur ces interactions entre particules et on peut regarder ce qui se passe en relativit\'e restreinte.  
Dans ce cadre, on cherche un lagrangien qui v\'erifie les axiomes $1$ et $2$. L'exemple le plus simple est clairement de poser $L=constante$. On a vu que ce choix, en m\'ecanique classique, m\^eme s'il ne contredisait pas les axiomes $1$ et $2$, n'avait aucune chance de mod\'eliser la r\'ealit\'e physique puisque toute trajectoire minimiserait alors la fonctionnelle d'action. Comme on va le voir, la situation est diff\'erente en relativit\'e restreinte. \\

\noindent Soit donc $p=(\C,m)$ une particule dans le vide. On va poser $L= - m$, choix que l'on justifiera plus tard. Comme en m\'ecanique classique, fixons $A,B \in \C$ et prenons $\C'$ une autre courbe de genre temps passant par $A$ et $B$. La fonctionnelle d'action associ\'ee est 
$$S_{AB}(\C') (\hbox{ ou } S_{AB}(\beta)) := \int_0^{\tau_{AB}} (-m) dt = -m \tau_{AB}$$ 
o\`u encore une fois les $\beta$ admissibles sont les   param\'etrisations des courbes  $\C'$  
d\'efinies sur $[0,\tau_{AB}]$, normales de type temps orient\'ees
positivement et telles que $\beta(0)=A$ et $\beta(\tau_{AB})=B$.  
On rappelle que le temps propre d'un point $c(a)$  \`a un point $c(b)$  associ\'e \`a une courbe $c$ est 
d\'efini par 
$$\tau_{AB}=\int_a^b \sqrt{-T(c'(t))} dt.$$

\noindent Soit maintenant un  $D$ un observateur galil\'een dirig\'e par $i_D$ vecteur unitaire orient\'e positivement et $\beta:[a,b] \to \M$ une param\'etrisation d'une courbe $\C'$  que l'on suppose normale pour l'observateur $D$ (et qui n'est donc pas admissible). 
On veut trouver l'expression du lagrangien de la particule relativement \`a l'observateur $D$. Pour cela, on \'ecrit
$$ \overrightarrow{A\beta(t)}= \overrightarrow{k}(t) + \alpha(t) i_D$$
o\`u $\overrightarrow{k}(t) \in [i_D]^\perp$, $\alpha(t) \in \mR$ et   
o\`u l'on a choisi $A$ comme origine. On verra par la suite que le r\'esultat obtenu ne d\'epend pas du choix de l'origine.  
On obtient ainsi une courbe associ\'ee dans $\mR \times [i_D]^\perp$ $\tilde{\beta}(t):=(\alpha(t), \overrightarrow{k}(t))$. Comme en m\'ecanique classique, puisque $\beta$ est normale par rapport \`a l'observateur $D$, on a $\alpha'(t) = 1$ et $\overrightarrow{k}'(t)$  est la vitesse de $\beta$ par rapport \`a $D$. 
On cherche le lagrangien $L_D$ de la particule vue par $D$, c'est-\`a-dire $L_D :\mR \times [i_D]^\perp \times [i_D]^\perp   \to \mR$. Plus pr\'ecis\'ement, on cherche une fonction $L_D$ telle que la fonctionnelle d'action associ\'ee \`a $\beta$ (que l'on reparam\`etre pour qu'elle soit admissible) soit \'egale \`a celle de $\tilde{\beta}$  et ce, pour tout $\beta$.  
 Observons que
comme $T(i_D)=-1$, on a:

$$T(\beta'(t)) = T(\overrightarrow{k}'(t)) -1 =v_t^2-1$$
o\`u $v_t:=\sqrt{T(\overrightarrow{k}'(t))}$ est la norme de la vitesse de la particule d\'ecrite par la courbe  $\beta(t)$ par rapport \`a $D$. 
 Ainsi
$$S_{AB}(\C')= -m \int_a^b \sqrt{1-v_t^2} dt=-m \tau_{AB}$$
o\`u $A= \beta(a)$ et $B=\beta(B)$.
Autrement dit, le {\it lagrangien de la particule vue par $D$} est 

\[ L_D \left| \begin{array}{ccc}
\mR \times [i_D]^\perp \times [i_D]^\perp  & \to&  \mR \\
(t,x,\overrightarrow{v}) & \mapsto & -m\sqrt{1 -T(\overrightarrow{v})} 
\end{array} \right. \]

\noindent La fonctionnelle d'action associ\'ee redonne bien $S_{AB}(\C')$.

\begin{remark} 
Si le vecteur vitesse $v$ est petit par rapport \`a la vitesse de la
lumi\`ere (qui rappelons, avec nos conventions, vaut $1$), on voit que 
$$L_D(\overrightarrow{v}) \sim -m + \frac{1}{2} mv^2$$
($v= T(\overrightarrow{v})$). Comme $-m+\frac{1}{2} mv^2$ a une fonctionnelle d'action dont les minimiseurs sont les m\^emes que celle de  
$\frac{1}{2} mv^2$, qui \'etait le lagrangien de $p$ vue par $D$ en
m\'ecanique classique, on remarque que pour des vitesses petites, les
trajectoires de $p$ v\'erifient les m\^emes principes qu'en m\'ecanique
classique. C'est la premi\`ere raison pour laquelle on a choisi de prendre $L= -m$. On en verra une deuxi\`eme apr\`es le calcul de l'\'energie. 
\end{remark}
 
\noindent Revenons maintenant au  Th\'eor\`eme \ref{eq_euler} et aux  Propositions
\ref{energie_def} et \ref{impulsion_def}. Leurs preuves ne font intervenir
que la structure d'espace affine et pas la signature de la forme
quadratique $T$. Autrement dit, ils restent valables en relativit\'e
restreinte. Comme $L_D$ ne d\'epend que de la troisi\`eme variable, on peut
d\'efinir l'\'energie et l'impulsion. Fixons $\beta=c$ o\`u  $c:[a,b] \to
\M$ est une
param\'etrisation normale pour $D$ de la courbe $\C$ associ\'ee \`a la
particule $p$ consid\'er\'ee. En gardant les m\^emes notations que
ci-dessus, on a   pour l'{\it \'energie de $p$ vue par $D$} :

$$\E(t) = (d L_D)_{\overrightarrow{k}'(t)} (\overrightarrow{k}'(t)) - L_D(\overrightarrow{k}'(t)).$$
Puisque $L_D (\overrightarrow{v}) = -m\sqrt{1 -T(\overrightarrow{v})}$, on a 
\begin{eqnarray*}
\begin{aligned}
\E(t) &  = \frac{m T(\overrightarrow{k}'(t))}{\sqrt{1-T(\overrightarrow{k}'(t))}} + m
  \sqrt{1-T(\overrightarrow{k}'(t))} \\
& = \frac{m}{\sqrt{1-v_t^2}}.
\end{aligned}
\end{eqnarray*}

\begin{remark}
 Pour une particule au repos (c'est-\`a-dire vue par un observateur pour lequel $v=0$), on retrouve que l'\'energie de la particule est 
{\bf  $E= mc^2$} (voir Chapitre \ref{raisonnements}) o\`u $c$ est la vitesse de la lumi\`ere (qui dans nos unit\'es vaut $1$). Cela fournit une deuxi\`eme raison de d\'efinir le lagrangien par $L= -m$ pour une particule de masse $m$. Une diff\'erence majeure avec la m\'ecanique classique  est qu'en relativit\'e restreinte une particule au repos a une \'energie non nulle. 
\end{remark}

\noindent De m\^eme, on calcule que l'{\it impulsion de $p$ vue par $D$}
est donn\'ee par 
$$\overrightarrow{P} = \frac{m \overrightarrow{k}'(t)}{\sqrt{1-v_t^2}}.$$
On retrouve la formule trouv\'ee par intuition physique dans le Chapitre 1. Comme en m\'ecanique classique, on d\'eduit des Propositions
\ref{energie_def} et \ref{impulsion_def} que la trajectoires de $p$ est une
droite.

\begin{remark} \label{maxim}
Puisque $S_{AB}(\beta) =-m \tau_{AB}$, une particule dans le vide a une
courbe qui maximise $\tau_{AB}$.  
\end{remark}
%%%%%%%%%%%%%%%%%%%%%%%%%%%%%%%%%%%%%%%%%%%%%%%%%%%%%%%%%%%%
\subsection{\'Energie d'un fluide parfait sans pression} \label{efpsp}  

\noindent {\it Ce paragraphe a pour but de d\'efinir l'\'energie d'un fluide parfait sans pression en relativit\'e restreinte. Cela permettra de donner en relativit\'e g\'en\'erale une d\'efinition naturelle du tenseur d'\'energie-impulsion. La lecture de ce paragraphe n'est pas indispensable pour comprendre d'o\`u vient l'\'equation d'Einstein. }

\noindent   En relativit\'e restreinte, 
$\M$ est un espace affine muni d'une forme bilin\'eaire sym\'etri--que $g$ (ou d'une forme quadratique  $T$) de
signature $(-,+,+,+)$. C'est donc un cas particulier de vari\'et\'e lorentzienne dont la m\'etrique
est en tout point \'egale \`a $g$. 
Soit $F$ un fluide parfait sans pression de densit\'e de masse $\rho$.

\begin{definition}
\noindent Soit $D$ un observateur galil\'een dirig\'e par $i_D$ unitaire et
orient\'e positivement. 

\begin{enumerate}
\item La {\it densit\'e d'\'energie du fluide $F$ par rapport \`a $D$} est la
  fonction d\'efinie pour $x \in \M$ par  
$$e_D(x) := \frac{\rho}{1-v_x^2}$$
o\`u $v_x^2$ est la vitesse (en norme) du fluide en $x$ pour l'observateur
$D$ (i.e. la 
vitesse de la courbe qui passe par $x$ relativement \`a $D$).
\item Soit $S$ un voisinage d'un point $x \in \M$, $S \subset x + [i_D]^\perp$
  (rappelons que $x+ [i_D]^\perp$ est l'ensemble des points simultan\'es
  \`a $x$ pour $D$). L'{\it \'energie du fluide \`a travers $S$} est
  l'int\'egrale
$$\E_S=\int_S e_D ds_g.$$
\end{enumerate} 
\end{definition}

\noindent Justifions cette d\'efinition. Supposons que sur $S$ la vitesse
du fluide par rapport \`a $D$ est constante (en norme) \'egale \`a $v$. Alors par d\'efinition, 
\begin{eqnarray} \label{energie_cst} 
\E_S= \frac{1}{1-v^2} \int_S \rho ds_g.
\end{eqnarray}   
Par ailleurs, $v = \sqrt{ g(\overrightarrow{v}_{F/D}, \overrightarrow{v}_{F/D})}$ et $\overrightarrow{v}_{F/D}(x)= \frac{\overrightarrow{k}}{a}$ o\`u l'on a \'ecrit comme
ci-dessus
\begin{eqnarray} \label{vit_fluide}
 \overrightarrow{u}(x) = \overrightarrow{k} + a i_D.
\end{eqnarray}
Ici $\overrightarrow{k} \in [i_D]^\perp$, $a \in  \mR$ et $\overrightarrow{u}$ est le champ
de vecteurs unitaire associ\'e \`a $F$. 
Maintenant, notons $m$ la masse au repos du fluide sur $S$. On a par d\'efinition
\begin{eqnarray} \label{masse_au_repos}
m= -\int_S \rho g(\overrightarrow{u},i_D) ds_g
\end{eqnarray}
car $i_D$ est, puisque $S \subset x + [i_D]^\perp$, le champ de vecteurs $g$-orthogonal \`a $S$ unitaire et positivement orient\'e. 
D'apr\`es l'\'equation \eref{vit_fluide}, on a en utilisant le fait que $\overrightarrow{k} \perp i_D$,
\begin{eqnarray} \label{ixid}
g(\overrightarrow{u},i_D)=  -a. 
\end{eqnarray}  
Encore une fois avec \eref{vit_fluide}, 
$$-1 =g(\overrightarrow{u},\overrightarrow{u}) = g(\overrightarrow{k},\overrightarrow{k}) - a^2$$
et comme 
$$g(\overrightarrow{k},\overrightarrow{k}) = a^2 g(\overrightarrow{v}_{F/D}, \overrightarrow{v}_{F/D}) = a^2 v^2,$$
cela donne 
$$-1= a^2(v^2 -1).$$
Puisque $a>0$ (car $\overrightarrow{u}$ et $i_D$ sont positivement orient\'es), on obtient 
$a = \frac{1}{\sqrt{1-v^2}}$. En injectant cette valeur dans \eref{ixid}, on 
obtient que 
\begin{eqnarray} \label{gii}
g(\overrightarrow{u},i_D)= - \frac{1}{ \sqrt{1-v^2}}.
\end{eqnarray}  
Avec \eref{masse_au_repos}, cela donne  
$$m = \frac{1}{\sqrt{1-v^2}} \int_D \rho ds_g.$$
Avec \eref{energie_cst}, on obtient une \'energie
$$\E_S = \frac{m}{\sqrt{1-v^2}}.$$
On retrouve en particulier la valeur de l'\'energie pour une particule dans le vide vue par l'observateur $D$ (voir le paragraphe \ref{matiere_rr}). \\

%%%%%%%%%%%%%%%%%%%%%%%%%%%%%%%%%%%%%%%%%%%%%%%%%%%%%%%%%%%%%%%
\section{En relativit\'e g\'en\'erale}

\noindent {\bf Diff\'erence fondamentale avec la m\'ecanique classique}
En m\'ecanique classique, on consid\'erait que les particules s'attiraient
entre elles. En relativit\'e, le comportement de la mati\`ere est r\'egi
par deux axiomes. \\

\noindent \underline{Axiome 1 :} {\it Les courbes des particules, param\'etr\'ees
par leur temps propres (c'est-\`a-dire que la param\'etrisation est normale
positive), sont des g\'eod\'esiques de type temps de $(\M,g)$.} \\

\noindent On consid\`ere que le tenseur $g$ (et donc ses g\'eod\'esiques)
contient toutes les informations sur la mati\`ere. \\

\noindent \underline{Axiome 2 :} {\it Cet axiome donne pr\'ecis\'ement le lien
entre le tenseur $g$ et la mati\`ere. Il sera pr\'ecis\'e plus tard.}\\

\noindent Toute la difficult\'e revient justement \`a trouver ce deuxi\`eme
axiome de mani\`ere \`a ce que, \`a vitesse faible, on retrouve les lois de
la m\'ecanique classique. On veut \'eviter de consid\'erer la mati\`ere
particule par particule : on a vu que cela conduit \`a des \'equations affreusement
compliqu\'ees \`a r\'esoudre (c'est ce qui se passe en m\'ecanique
classique d\'es qu'il y a trois particules ou plus). On gardera donc le point de vue des fluides. 

%%%%%%%%%%%%%%%%%%%%%%%%%%%%%%%%%%%%%%%%%%%%%%%%%%%%%%
\subsection{Fluides parfaits sans pression et tenseur d'\'energie-impulsion} 

\noindent Puisqu'en relativit\'e g\'en\'erale, on consid\`ere que les particules sont ind\'ependantes les unes des autres et que leur trajectoire ne d\'epend
que de la m\'etrique $g$, il est naturel de consid\'erer la mati\`ere comme un  fluide parfait sans pression. 
La premi\`ere chose \`a faire est de donner une d\'efinition de l'\'energie d'un fluide
parfait sans pression par rapport \`a un observateur. Puisque les particules sont ind\'ependantes deux \`a deux, chaque particule va se comporter comme une seule particule dans le vide, mod\`ele que l'on a d\'ej\`a \'etudi\'e en relativit\'e restreinte.  On va voir qu'en relativit\'e g\'en\'erale, on peut tout calculer en un
point $x \in \M$ et dans l'espace tangent correspondant $T_x \M$. Or
lorsqu'on travaille sur $T_x \M$ muni de la m\'etrique $g_x$, on est
exactement dans le cadre de la relativit\'e restreinte, cadre sur lequel
on va donc s'appuyer pour construire la
th\'eorie. 

On commence par donner la d\'efinition suivante. 

\begin{definition}
Soit $F$ un fluide, $D$ un observateur et $x \in D$. La {\it vitesse du fluide
par rapport \`a $D$} est le vecteur vitesse de la courbe du fluide passant
par $x$ (qui peut \^etre consid\'er\'ee comme un observateur) par rapport
\`a $D$.   
\end{definition} 

\noindent On a dit plus haut que la vitesse d'un observateur par rapport \`a un
autre n'avait pas de sens en relativit\'e g\'en\'erale, parce que pour la d\'efinir, il faudrait d\'ecomposer un vecteur d'un espace tangent $T_y \M$
dans un autre espace tangent $T_x \M$. Lorsque les deux observateurs sont
au m\^eme point, il n'y a plus ce probl\`eme et on peut proc\'eder comme en
relativit\'e restreinte. Donc pour pr\'eciser la d\'efinition ci-dessus,
notons $i_D$ le vecteur unitaire tangent \`a $D$ {\bf en x} et orient\'e
positivement. Notons $\overrightarrow{u}$ le champ de vecteurs unitaire associ\'e \`a
$F$. On \'ecrit de mani\`ere unique
$$\overrightarrow{u}(x) = \overrightarrow{k} + a i_D$$
o\`u $\overrightarrow{k} \in [i_D]^\perp$ (l'orthogonalit\'e \'etant bien \'evidemment relative \`a $g$) et $a \in \M$. Comme en relativit\'e
restreinte, on d\'efinit 

$$\overrightarrow{v}_{F/D} := \frac{\overrightarrow{k}}{a}.$$ 
\smallskip

\noindent 
On consid\`ere un fluide parfait sans pression $F$ de densit\'e de masse $\rho$ dans un domaine $\Om \subset \M$, $x \in \Om$ et un observateur au point $x$ dirig\'e par $i_D$ unitaire et orient\'e positivement. On a vu que la densit\'e d'\'energie de $F$ vue par $D$ en $x$ \'etait 
$$e_D(x) = \frac{\rho(x)}{1-v^2}$$ 
ou encore d'apr\`es \eref{gii}, 
$$e_D(x) = \rho(x) \left(g_x(\overrightarrow{u}(x),i_D)\right)^2.$$
Il faut noter que cette expression n'aurait pas de sens en relativit\'e g\'en\'erale en un autre point que $x$.
Cela conduit \`a d\'efinir la forme bilin\'eaire sym\'etrique 
\[ \tau_x \left| \begin{array}{ccc}
T_x \M \times T_x \M & \to& \mR \\
(\overrightarrow{v},\overrightarrow{w}) & \mapsto &  \rho(x) g_x(\overrightarrow{u}(x),\overrightarrow{v})g_x(\overrightarrow{u}(x),\overrightarrow{w}).
\end{array} \right. \]
En consid\'erant $\tau_x$ pour tout $x$, on obtient ainsi un tenseur deux fois covariant qui v\'erifie que pour tout observateur $D$ au point $x$ dirig\'e par $i_D$, unitaire et orient\'e positivement, on a 
$$\tau(i_D,i_D) = e_D(x)$$ 
o\`u $e_D(x)$ est d\'efini comme ci-dessus. 
D'autre part, on pourrait faire pour l'impulsion la m\^eme construction que pour la densit\'e d'\'energie en relativit\'e restreinte et avec la m\^eme d\'emarche obtenir en relativit\'e g\'en\'erale un vecteur au point $x$ que l'on notera 
$\overrightarrow{P}_D(x)$ et qui est l'analogue de $e_D(x)$ construit ci-dessus. Alors, on peut v\'erifier que si $\overrightarrow{w}$ est unitaire (i.e. $g(\overrightarrow{w},\overrightarrow{w})=1$) et $g$-orthogonal \`a $i_D$,  
$$\tau_x(\overrightarrow{w},\overrightarrow{w}) = g(\overrightarrow{P}_D(x),\overrightarrow{w}).$$ 
Cela justifie de donner la d\'efinition suivante  
\begin{definition}
Le tenseur $\tau$ est appel\'e {\it tenseur d'\'energie-impulsion} associ\'e au fluide $F$.
\end{definition}
   
\noindent En fait, on confondra souvent $\tau$ avec le tenseur deux fois contravariant qui lui est associ\'e. On le notera toujours $\tau$. 
Il est clair que 
\begin{eqnarray} \label{def_tei}
\tau = \rho \overrightarrow{u} \otimes \overrightarrow{u},
\end{eqnarray} 
$\overrightarrow{u}$ \'etant le champ de vecteurs unitaires associ\'e \`a $F$. 
On montre maintenant que 

\begin{prop} \label{tei_geod}
Le champ de vecteurs de composantes $\nabla_k \tau^{ kl}$ est le champ de vecteurs nul si et seulement si les courbes du fluides sont des g\'eod\'esiques de $(\M,g)$.
\end{prop}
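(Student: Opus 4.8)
Le plan est de calculer la divergence $\nabla_k \tau^{kl}$ directement \`a partir de l'expression \eref{def_tei} du tenseur, puis de montrer que, une fois utilis\'ee la conservation de la masse, elle se r\'eduit exactement \`a l'acc\'el\'eration des courbes du fluide (au facteur $\rho$ pr\`es).

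D'abord, j'\'ecrirais $\tau^{kl} = \rho\, u^k u^l$ (o\`u $u^k$ sont les composantes du champ unitaire $\overrightarrow{u}$) et j'appliquerais la r\`egle de Leibniz pour la connexion de Levi-Civita de $g$:
$$\nabla_k \tau^{kl} = \bigl(\nabla_k(\rho\, u^k)\bigr)\, u^l + \rho\, u^k \nabla_k u^l.$$
La \emph{propri\'et\'e requise} \eref{div=0}, qui s'\'ecrit $\div(\rho\,\overrightarrow{u}) = \nabla_k(\rho\, u^k) = 0$, annule le premier terme. Il reste donc
$$\nabla_k \tau^{kl} = \rho\, u^k \nabla_k u^l = \rho\,\bigl(\nabla_{\overrightarrow{u}}\,\overrightarrow{u}\bigr)^l,$$
c'est-\`a-dire $\rho$ fois la composante $l$ de l'acc\'el\'eration $\nabla_{\overrightarrow{u}}\,\overrightarrow{u}$.

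Il reste \`a relier l'annulation de ce vecteur \`a la condition g\'eod\'esique. Comme $\overrightarrow{u}$ est le champ unitaire tangent aux courbes du fluide et que celles-ci sont param\'etr\'ees par leur temps propre, la restriction de $\overrightarrow{u}$ \`a une courbe $c$ de la congruence co\"incide avec son vecteur tangent $c'$, et $\nabla_{\overrightarrow{u}}\,\overrightarrow{u}$ restreint \`a $c$ n'est autre que $\nabla_{c'} c'$. Les courbes du fluide sont donc des g\'eod\'esiques si et seulement si $\nabla_{\overrightarrow{u}}\,\overrightarrow{u} = 0$. Sur le domaine o\`u $\rho > 0$, l'\'egalit\'e $\rho\,\nabla_{\overrightarrow{u}}\,\overrightarrow{u} = 0$ \'equivaut \`a $\nabla_{\overrightarrow{u}}\,\overrightarrow{u} = 0$, ce qui \'etablit l'\'equivalence.

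Le calcul lui-m\^eme est \'el\'ementaire ; la vraie substance se loge dans l'emploi de la conservation de la masse pour \'eliminer le terme port\'e par $\overrightarrow{u}$, et dans la restriction $\rho > 0$ n\'ecessaire pour la r\'eciproque (l\`a o\`u $\rho$ s'annule, la divergence est trivialement nulle sans que les courbes soient g\'eod\'esiques). On peut d'ailleurs rendre la structure transparente en observant que, $g(\overrightarrow{u},\overrightarrow{u}) = -1$ \'etant constant, on a $g(\overrightarrow{u}, \nabla_{\overrightarrow{u}}\,\overrightarrow{u}) = 0$ : le second terme est orthogonal \`a $\overrightarrow{u}$ tandis que le premier lui est colin\'eaire, de sorte qu'aucune compensation n'est possible et que $\nabla_k \tau^{kl} = 0$ \'equivaut \`a la conjonction de la conservation de la masse et de l'\'equation des g\'eod\'esiques.
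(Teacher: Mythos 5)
Votre preuve est correcte et suit essentiellement la m\^eme d\'emarche que celle du texte : r\`egle de Leibniz sur $\tau^{kl}=\rho\,u^k u^l$, \'elimination du terme $\nabla_k(\rho\,u^k)\,u^l$ par la propri\'et\'e requise $\div(\rho\,\overrightarrow{u})=0$, puis identification de $\rho\,\nabla_{\overrightarrow{u}}\overrightarrow{u}$ avec l'\'equation des g\'eod\'esiques. Vos deux pr\'ecisions suppl\'ementaires — la restriction $\rho>0$ pour la r\'eciproque, que le texte passe sous silence, et l'observation que les deux termes (colin\'eaire et orthogonal \`a $\overrightarrow{u}$) ne peuvent se compenser — sont des raffinements bienvenus mais ne changent pas la nature de l'argument.
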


\begin{proof} 
Soit $x \in \Om$. Prenons une base $(e_1,\cdots,e_4)$ de $T_x \M$ tel que $\nabla e_j(x) = 0$. On a alors 
\begin{eqnarray*}
\begin{aligned} 
\nabla_k\tau^{kl} e_l & = \nabla_k (\rho \overrightarrow{u}^k \overrightarrow{u}^l) e_l \\
& = \nabla_k(\rho \overrightarrow{u}^k) \overrightarrow{u}^l e_l  + \rho \overrightarrow{u}^k \nabla_k \overrightarrow{u}^l e_l  \\
& = \div(\rho \overrightarrow{u}) \overrightarrow{u} + \rho D_{\overrightarrow{u}} \overrightarrow{u}   
\end{aligned}
\end{eqnarray*}
o\`u $D$ est la connexion de Levi-Civita associ\'ee \`a $g$.  
Compte-tenu de \eref{div=0}, le champ de vecteurs $\nabla_k\tau^{kl} e_l $ est nul si et seulement si $D_{\overrightarrow{u}} \overrightarrow{u} =0$, c'est-\`a-dire si et seulement si les courbes du fluide sont des g\'eod\'esiques de $(\M,g)$. 
\end{proof}

%%%%%%%%%%%%%%%%%%%%%%%%%%%%%%%%%%%%%%%%%%%%%%%%%%%%%%%%%%%%%
\subsection{L'\'equation d'Einstein} \label{eq_ein}
Nous sommes maintenant en mesure de d\'efinir le deuxi\`eme axiome de la
relativit\'e g\'en\'erale. Pour cela, la strat\'egie est d'essayer de
copier les mod\`eles de la relativit\'e restreinte et de la m\'ecanique
classique pour retrouver les lois de Newton pour des  vitesses faibles par
rapport \`a celle de la lumi\`ere.  Le probl\`eme est que dans ces
mod\`eles, les observateurs galil\'eens jouent un r\^ole fondamental (par
exemple pour d\'efinir l'\'energie et l'impulsion) et qu'en relativit\'e
g\'en\'erale, on n'a aucun observateur privil\'egi\'e. L'id\'ee est alors
de supposer dans un premier temps qu'on a de tels observateurs ("presque"
galil\'eens), de trouver une bonne formulation du deuxi\`eme axiome
dans ce cadre et de voir que ce que l'on a trouv\'e est en fait
intrins\`eque et ne d\'epend pas de ces observateurs particuliers.

\begin{definition}
Un {\it domaine statique} est un ouvert connexe $\Om$ de $\M$ tel que 
$(\Om,g)$ est isom\'etrique \`a $(I \times \om,\tilde{g})$ o\`u $I$ est un
intervalle ouvert, $\om$ est un ouvert connexe de $\mR^3$ et o\`u dans la
``carte canonique'' (donn\'ee par l'isom\'etrie de $\Om$ dans $I \times
\om$), la matrice de $g$ au point $(t,x^1,x^2,x^3) \in I \times \om$ est
donn\'ee par 
\[ \left( \begin{array}{cccc}
-f^2(x^1,x^2,x^3) & 0 &0 &0 \\
0 &\bar{g}_{11} (x^1,x^2,x^3) & \bar{g}_{12} (x^1,x^2,x^3) &\bar{g}_{13} (x^1,x^2,x^3) \\
0 &\bar{g}_{21} (x^1,x^2,x^3)  &  \bar{g}_{22} (x^1,x^2,x^3)
&\bar{g}_{23} (x^1,x^2,x^3) \\
0 &\bar{g}_{31} (x^1,x^2,x^3)  &  \bar{g}_{32} (x^1,x^2,x^3)
&\bar{g}_{33} (x^1,x^2,x^3) 
\end{array} \right) \]
o\`u $f,\bar{g}_{i,j}: \om \to \mR$ ($i \in \{1,2,3\}$) sont des fonctions suffisamment
r\'eguli\`eres pour que la suite ait un sens. Autrement dit, on a 
$$\tilde{g}(t,x^1,x^2,x^3)= -f^2(x^1,x^2,x^3) + \sum_{k,l=1}^3
  \bar{g}_{kl}(x^1,x^2,x^3) dx^k\otimes dx^l $$ pour tout
$(t,x^1,x^2,x^3) \in I \times \om$,
\end{definition}

\begin{remark}
Le mot {\it statique} provient du fait que les composantes de la m\'etrique
ne d\'ependent pas du temps. 
\end{remark}

\noindent Il n'y a aucune raison qu'il existe dans $(\M,g)$ un domaine
statique mais Hawking a montr\'e que l'existence d'un tel domaine statique
est \'equivalente \`a l'existence d'un {\it champ de Killing $K$}
(c'est-\`a-dire un champ de vecteurs engendrant un flot isom\'etrique) et
d'une hypersurface $H$ de genre espace orthogonale en tout point au champ
de vecteurs $K$. Le flot  de $K$ donne donc un groupe  \`a un param\`etre
d'isom\'etries dont les orbites (ce sont les courbes
int\'egrales de $K$) sont $g$-orthogonales \`a $H$.  \\

\noindent Autrement dit, consid\'erons une courbe int\'egrale de $K$. Cette
courbe d\'efinit un observateur $D$. L'espace vu par $D$ est \`a tout
instant isom\'etrique \`a $(H,\bar{g})$ o\`u $\bar{g}$ est la restriction de
$g$ \`a $H$. Ces observateurs observant toujours le m\^eme espace sont les
analogues des observateurs galil\'eens en m\'ecanique classique ou en
relativit\'e restreinte. \\

\noindent Nous travaillons donc dans $(I \times \om, \tilde{g})$. La
mati\`ere est mod\'elis\'ee par un fluide parfait sans pression $F$. Rappelons
que le premier axiome dit que les courbes de $F$ param\'etr\'ees par le
temps sont des g\'eod\'esiques de $(\M,g)$. L'id\'ee fondamentale pour
trouver le deuxi\`eme axiome est que le fluide lui-m\^eme (y compris sa
densit\'e de masse) est li\'e \`a la m\'etrique $g$. C'est ce lien que l'on
cherche \`a d\'eterminer de mani\`ere \`a retrouver (en approximation) les
lois de Newton. Remarquons que les domaines statiques permettent de se ramener \`a la relativit\'e restreinte ou \`a la m\'ecanique de la fa\c{c}on suivante : prenons le cas 
o\`u 
 $f= 1$ et o\`u
$\bar{g}$ est la m\'etrique euclidienne. Alors, on a deux mani\`eres de voir les choses. Soit on travaille dans $(I \times \om, \tilde{g})$. Dans ce cas, on retrouve l'espace-temps de la
relativit\'e restreinte. Sinon, on peut travailler dans $I \times w$ muni
de la forme  $T$ associ\'ee \`a la forme bilin\'eaire $-f^2 dt^2$  dont le noyau (tangent \`a $w$) est muni du produit scalaire $\bar{g}$. On retrouve le mod\`ele de la m\'ecanique classique. {\bf Pour trouver le deuxi\`eme axiome, c'est avec cette vision des choses que l'on va travailler.}  \\

\noindent Pour cela, consid\'erons une courbe $\C$ du fluide param\'etr\'ee par $c$,
param\'etrisation normale positive. On \'ecrit $c$ dans la carte
$(t,x^1,x^2,x^3,x^4)$: 

$$c(s) = (c_0(s),c_1(s),c_2(s),c_3(s))$$ 
et on note $\overrightarrow{k}(s)= (c_1'(s),c_2'(s),c_3'(s))$ et $\overrightarrow{a}(s)=(c_1''(s), c_2''(s),c_3''(s))$
\noindent \'Ecrire $c$ dans cette carte revient \`a regarder $\C$ \`a
travers 
les yeux d'un observateur galil\'een $D$, c'est-\`a-dire une courbe
int\'egrale de $K$. Le vecteur $\overrightarrow{k}$ repr\'esente la vitesse de $\C$
par rapport \`a $D$ et le vecteur $\overrightarrow{a}$ est le vecteur acc\'el\'eration
de $\C$ vue par $D$.  En fait, pour travailler vraiment avec le mod\`ele de
la m\'ecanique 
classique, il faudrait que $c$ soit une param\'etrisation normale pour $T= -\frac{\partial}{\partial t}$ et donc normaliser $c$ pour avoir $c_0' = 1$. 
Cependant, comme $g_{c(s)} (c'(s),c'(s))= -1$, on a 
$$ - f^2 c_0'(s) + \| \overrightarrow{k}(s) \|_{\bar{g}}= -1$$ 
c'est-\`a-dire 
\begin{eqnarray} \label{x0'}
(c_0')^2= \frac{1+\| \overrightarrow{k} \|_{\bar{g}}}{f^2}.  
\end{eqnarray} 
Maintenant, on doit supposer qu'on est proche du mod\`ele de la m\'ecanique classique donc $f$ est proche de $1$. D'autre part, les vitesses sont suppos\'ees petites par rapport \`a la vitesse de la lumi\`ere, c'est-\`a-dire qu'on suppose 

\begin{eqnarray} \label{vo1}
 \| \overrightarrow{k} \|_{\bar{g}} = o(1)
\end{eqnarray} 
ce qui fait que 
\begin{eqnarray} \label{xo0}
c_0' =o(1)
\end{eqnarray} 
(autrement dit, $c$ est "presque" une param\'etrisation normale pour la forme $-dt^2$) et $\overrightarrow{a}$ peut bien \^etre consid\'er\'e comme le vecteur acc\'el\'eration de la courbe $\C$. 
M\^eme si cette approche n'a rien de rigoureux, elle est physiquement coh\'erente. Elle va nous permettre de trouver par l'intuition un bon deuxi\`eme axiome dont la validit\'e sera v\'erifi\'ee par les observations physiques. \\

\noindent La premi\`ere chose \`a faire est de se d\'ebrouiller pour retrouver la relation \eref{Relation2} en approximation. Pour cela, nous devons exprimer $\overrightarrow{a}$ en fonction de donn\'ees g\'eom\'etriques. C'est l'objet du r\'esultat suivant : 
\noindent   
 \begin{prop} \label{geod} Les composantes du vecteurs $\overrightarrow{a}$
   v\'erifient pour tout $k \in \{1,2,3\}$
$$c_k ''(s) = - \frac{1}{f} ( 1+ v^2) \nabla^k_{\bar{g}} f  - \sum_{i,j=1}^3
\Gamma_{ij}^k c_i'
c_j' $$
o\`u l'on a pos\'e $v = \| \overrightarrow{k} \|_{\bar{g}}$ et o\`u  $
\nabla^k_{\bar{g}} f = g^{kj}\partial_j f$. 
\end{prop}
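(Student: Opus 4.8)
The plan is to invoke Axiom 1 of relativit\'e g\'en\'erale, which says that the fluid curve $\C$, parametrized by proper time (its normal positive parametrization, so that $g(c',c')=-1$), is a geodesic of $(\M,g)$. Hence $c$ satisfies the geodesic equations
$$c_\mu''(s) + \sum_{\alpha,\beta=0}^{3} \Gamma^\mu_{\alpha\beta}\, c_\alpha'(s)\, c_\beta'(s) = 0$$
for each $\mu \in \{0,1,2,3\}$, where $x^0 = t$ and the $\Gamma^\mu_{\alpha\beta}$ are the Christoffel symbols of $\tilde g$. The entire argument then reduces to evaluating these symbols for the specific static form of $\tilde g$ and specializing the equation to a spatial index $\mu = k$.

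First I would compute the needed symbols from $\Gamma^k_{\alpha\beta} = \frac12 g^{k\nu}\bigl(\partial_\alpha g_{\nu\beta} + \partial_\beta g_{\nu\alpha} - \partial_\nu g_{\alpha\beta}\bigr)$, using the two structural features of a static domain: the components are independent of $t=x^0$ (so $\partial_0 g_{\mu\nu}=0$) and the metric is block-diagonal, $g_{0i}=0$ (so $g^{k0}=0$ and $g^{k\nu}$ reduces to $\bar g^{kj}$). This yields three cases. For $\alpha=\beta=0$, only the term $-\partial_j g_{00} = -\partial_j(-f^2) = 2f\partial_j f$ survives, giving $\Gamma^k_{00} = f\,\bar g^{kj}\partial_j f = f\,\nabla^k_{\bar g} f$. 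When exactly one index is $0$, every surviving derivative is either a $\partial_0$ of a time-independent quantity or a derivative of $g_{0i}=0$, so $\Gamma^k_{0i}=0$. When both indices are spatial, the symbols coincide with the Christoffel symbols $\Gamma^k_{ij}$ of the spatial metric $\bar g$, since only spatial derivatives of $\bar g_{ij}$ occur.

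Then I would expand the geodesic equation for $\mu=k$. The double sum splits as $\Gamma^k_{00}(c_0')^2 + 2\sum_{i}\Gamma^k_{0i} c_0' c_i' + \sum_{i,j}\Gamma^k_{ij} c_i' c_j'$; the middle block vanishes by the computation above, leaving
$$c_k''(s) = -f\,\nabla^k_{\bar g} f\,(c_0')^2 - \sum_{i,j=1}^3 \Gamma^k_{ij}\, c_i' c_j'.$$
It remains only to eliminate $(c_0')^2$, which is exactly what the normalization $g(c',c')=-1$ provides: equation \eref{x0'} gives $(c_0')^2 = (1+v^2)/f^2$ with $v = \|\overrightarrow{k}\|_{\bar g}$. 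Substituting turns the coefficient $f(c_0')^2$ into $(1+v^2)/f$, producing precisely the claimed identity.

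The main obstacle — really the only nontrivial point — is the Christoffel computation, and within it the verification that $\Gamma^k_{0i}=0$; this is what makes the acceleration decouple cleanly into a ``gravitational'' piece proportional to $\nabla_{\bar g} f$ and a purely spatial geodesic piece. Both vanishings rest squarely on the two defining features of a static domain (time-independence and the absence of $dt\,dx^i$ cross terms), so I would state those two facts explicitly before beginning the case analysis.
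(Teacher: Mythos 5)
Your proposal is correct and follows essentially the same route as the paper: the paper likewise invokes Axiom 1 to write $D_{c'}c' = 0$, expands it componentwise using the Christoffel symbols of the static metric (its equation \eref{christoffel}, which gives exactly your three cases $\Gamma^k_{00}=f\,\bar g^{kl}\partial_l f$, $\Gamma^k_{0i}=0$, and purely spatial $\Gamma^k_{ij}$), and then eliminates $(c_0')^2$ via the normalization identity \eref{x0'}. Your version is if anything slightly more careful, since you state explicitly that the vanishing of the mixed symbols and the reduction of the spatial ones to those of $\bar g$ both rest on time-independence and the absence of $dt\,dx^i$ cross terms, points the paper leaves implicit by deferring the Christoffel computation to the proof of Proposition \ref{ricci}.
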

 
\begin{proof}
D'apr\`es l'axiome $1$, $\C$ est une g\'eod\'esique de
$(\M,g)$. Autrement dit, 
$D_{c'(t)}c'(t) =0$ ($D$ est la d\'eriv\'ee covariante associ\'ee \`a la
connexion de Levi-Civita de $g$).  
Donc 
\begin{eqnarray} \label{dctct}
\begin{aligned} 
0 = D_{c'(t)}(c'(t)) &  
= D_{c'(t)}\left(c_0' \frac{\partial}{\partial t}\right) +\sum_{i,j=1}^3  D_{c'(t)}
\left(c_i' \frac{\partial}{\partial x^i}\right)  \\
& = c''(t) + c_0' D_{c'(t)} \frac{\partial}{\partial t} + \sum_{i,j=1}^3
c_i'  D_{c'(t)}
\frac{\partial}{\partial x^i}.
\end{aligned}
\end{eqnarray} 

\noindent \'Ecrivons maintenant que 
$$D_{c'(t)} \frac{\partial}{\partial t}= c_0' D_{\frac{\partial}{\partial
    t}}  \frac{\partial}{\partial t} +  \sum_{i=1}^3 c_i'  D_{\frac{\partial}{\partial
    x^i}}   \frac{\partial}{\partial t}.$$
Comme d'apr\`es \eref{christoffel}, pour $i,k \geq 1$, $\Gamma_{i0}^k = 0$,
on obtient 
\begin{eqnarray} \label{ddt}
D_{c'(t)} \frac{\partial}{\partial t} = c_0' \Gamma_{00}^{k} \frac{\partial}{\partial
    x^k}. 
\end{eqnarray} 
De m\^eme, on calcule que 
\begin{eqnarray} \label{ddk}
D_{c'(t)} \frac{\partial}{\partial x^i } = c_0' \Gamma_{0i}^0 +  \sum_{j=1}^3
c_j' \Gamma_{ij}^k \frac{\partial}{\partial x^k }.
\end{eqnarray}

\noindent En rempla\c{c}ant dans \eref{dctct} et en regardant composante par
composante, on trouve que pour tout $k \in \{1,2,3\}$ 
$$0 = c_k'' + \Gamma_{00}^k (x_0')^2 + \sum_{i,j=1}^3   \Gamma_{ij}^k x_i'
x_j'.$$
En utilisant \eref{christoffel} (voir plus bas) et \eref{x0'}, la preuve de la
proposition est compl\`ete. 
\end{proof}

\noindent Revenons maintenant \`a ce qui nous int\'eresse: retrouver la relation \eref{Relation2}. Pour \^etre "approximativement" en m\'ecanique classique, on doit supposer que $f$ vaut "presque" $1$ et $\bar{g}$ est presque la m\'etrique euclidienne.   On \'ecrit donc $f = 1 + h$ et on suppose que $h=o(1)$. On \'ecrit aussi $\bar{g}_{ij}=\delta_{ij} +o(1)$. 
Compte-tenu de \eref{christoffel} et de \eref{vo1}, on obtient que 
\begin{eqnarray} \label{veca}
\overrightarrow{a}= - \overrightarrow{\nabla h} (1 + o(1)).
\end{eqnarray}
Pour retrouver la relation \eref{Relation2}, on voudrait que cette fonction $h$ soit le potentiel newtonnien $f$ qui appara\^{\i}t dans la relation \eref{Relation2}, c'est-\`a-dire, en vertu de la relation \eref{Relation1} que 

\begin{eqnarray} \label{cequonveut}
\Delta_{\bar g} h \equiv 4 \pi \rho.  
\end{eqnarray} 
Or on calcule que 

\begin{prop} \label{ricci}
Dans la carte $(t,x^1,x^2,x^3)$, la courbure de Ricci de $g$ v\'erifie 
$$Ric \left(\frac{\partial}{\partial t}, \frac{\partial}{\partial t}\right) = -f
\Delta_{\bar{g}} f.$$
\end{prop}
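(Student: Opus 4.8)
The plan is to read off $\Ric(\partial/\partial t,\partial/\partial t)=R_{00}$ (the index $0$ denoting the time coordinate $t$) from the coordinate expression of the Ricci tensor
$$R_{\mu\nu} = \partial_\lambda \Gamma^\lambda_{\mu\nu} - \partial_\nu \Gamma^\lambda_{\mu\lambda} + \Gamma^\lambda_{\lambda\sigma}\Gamma^\sigma_{\mu\nu} - \Gamma^\lambda_{\nu\sigma}\Gamma^\sigma_{\mu\lambda},$$
specialised to $\mu=\nu=0$. The decisive structural fact is that $\tilde g$ is static: none of the components $f,\bar{g}_{kl}$ depends on $t$, so every $\partial_0$-derivative vanishes. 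In particular the second term $\partial_0\Gamma^\lambda_{0\lambda}$ drops out entirely. The Christoffel symbols I will use, taken from \eref{christoffel}, are $\Gamma^k_{00} = f\,\bar{g}^{kl}\partial_l f$ and $\Gamma^0_{0i} = (\partial_i f)/f$ for spatial indices, together with $\Gamma^k_{ij} = \bar\Gamma^k_{ij}$, the Christoffel symbols of $\bar{g}$; every symbol carrying exactly one lower time index and a spatial upper index, such as $\Gamma^k_{0i}$, vanishes, as does $\Gamma^0_{00}$.

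With these in hand I would evaluate the three surviving terms. The first expands to $\sum_k \partial_k(f\,\bar{g}^{kl}\partial_l f) = |\nabla_{\bar{g}}f|^2 + f\sum_k\partial_k(\bar{g}^{kl}\partial_l f)$, where $|\nabla_{\bar{g}}f|^2=\bar{g}^{kl}\partial_k f\,\partial_l f$. The third term $\Gamma^\lambda_{\lambda\sigma}\Gamma^\sigma_{00}$, using $\Gamma^\lambda_{\lambda k} = (\partial_k f)/f + \bar\Gamma^m_{mk}$, produces $|\nabla_{\bar{g}}f|^2 + f\sum_k \bar\Gamma^m_{mk}\,\bar{g}^{kl}\partial_l f$. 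The last term $\Gamma^\lambda_{0\sigma}\Gamma^\sigma_{0\lambda}$ receives contributions only from the pairings $(\lambda,\sigma)=(0,j)$ and $(i,0)$, each summing to $|\nabla_{\bar{g}}f|^2$, so it equals $2|\nabla_{\bar{g}}f|^2$. Adding the first and third terms and subtracting the last, the four copies of $|\nabla_{\bar{g}}f|^2$ cancel exactly, leaving
$$R_{00} = f\Big[\textstyle\sum_k\partial_k(\bar{g}^{kl}\partial_l f) + \sum_k\bar\Gamma^m_{mk}\,\bar{g}^{kl}\partial_l f\Big].$$

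The final step is to recognise the bracket. Since $\bar\Gamma^m_{mk} = \partial_k\log\sqrt{\det\bar{g}}$, it equals $\frac{1}{\sqrt{\det\bar{g}}}\partial_k(\sqrt{\det\bar{g}}\,\bar{g}^{kl}\partial_l f)$, namely the Laplace--Beltrami operator of $\bar{g}$ applied to $f$ in the analyst's sign. Under the sign convention adopted in this text, for which the flat Laplacian is $-\sum\partial_i^2$, this operator is $-\Delta_{\bar{g}}f$, whence $R_{00}=-f\Delta_{\bar{g}}f$, which is the claim. I expect the only genuine pitfalls to be bookkeeping: verifying that exactly the right gradient-squared terms cancel, correctly identifying the trace-of-Christoffel factor $\bar\Gamma^m_{mk}$ with the logarithmic derivative of the volume density, and keeping the sign convention for $\Delta_{\bar{g}}$ straight so that the minus sign in the statement emerges.
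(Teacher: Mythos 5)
Your proof is correct and follows essentially the same route as the paper: both evaluate $R_{00}$ from the coordinate formula for the Ricci tensor, using the Christoffel symbols \eref{christoffel} and the fact that staticity kills every $\partial_0$-derivative. The only difference is cosmetic bookkeeping at the end: the paper passes through the intermediate identity $R_{00} = -f\,\square_g f - |df|^2_{\bar{g}}$ (obtained via $\nabla_i g^{-1}=0$) and then converts $\square_g f$ into $\Delta_{\bar{g}} f$, whereas you cancel the $|\nabla_{\bar{g}}f|^2$ terms directly and identify the remaining bracket as the divergence-form Laplacian through $\bar{\Gamma}^m_{mk}=\partial_k \log\sqrt{\det \bar{g}}$ — the computations agree term by term.
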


\begin{proof}
On commence par calculer les symboles de Christoffel de la connexion de
Levi-Civita associ\'ee \`a $g$ dans cette
carte. Nous notons ``$0$'' la coordonn\'ee associ\'ee \`a $t$ et utilisons
les conventions d'Einstein. 
On rappelle que par d\'efinition, on a pour tous $i,j,k \in \{0,1,2,3\}$ 
$$\Gamma_{ij}^k= \frac{1}{2} g^{kl}\left(\partial_i g_{lj} +\partial_j g_{li}-
\partial_l g_{ij}\right)$$
et que $\Gamma_{ij}^k= \Gamma_{ji}^k$. 
Donc si $i,k \in \{1,\cdots,3\}$,
\begin{eqnarray} \label{christoffel} 
\Gamma_{00}^0 = \Gamma_{i0}^k = 0 \; ; \; \Gamma_{i0}^0= \frac{1}{f}
\partial_i f \, \hbox{ et }  \, \Gamma_{00}^k = f g^{kl} \partial_l f.
\end{eqnarray}
Maintenant, on sait que la courbure de Ricci s'exprime dans une carte en
fonction 
des symboles de Christoffel (voir l'appendice)
gr\^ace  \`a la formule suivante:
$$R_{\al \beta} = \partial_i\Gamma_{\al \beta}^i- \partial_\beta
\Gamma_{\alpha i }^i+ \Gamma_{im}^i \Gamma_{\alpha \beta}^m - \Gamma_{\beta
  m}^i \Gamma_{i \alpha}^m$$
o\`u l'on a not\'e $R_{\al\be}$ les composantes du tenseur de Ricci dans
la carte consid\'er\'ee. 
On a donc: 
$$R_{00} = \partial_i \Gamma_{00}^i - \partial_0 \Gamma_{i0}^i
 + \Gamma_{00}^m \Gamma_{mi}^i -\Gamma_{i0}^m\Gamma_{m0}^i$$
c'est-\`a-dire en rempla\c{c}ant les symboles de Christoffel par leur valeur:
\begin{eqnarray} \label{R00} 
R_{00}= \partial_i (f g^{ij} \partial_j f) + f \Gamma_{mi}^i g^{mj}
\partial_j f - 2 |df|_{\bar{g}}^2.
\end{eqnarray}  
On a utilis\'e le fait que puisque 
$f$ ne d\'epend pas de $t$, $|df|_{\bar{g}} = |df|_{ g}$.  
Maintenant, on calcule 

$$
\partial_i (f g^{ij} \partial_j f)=   |df|^2_{\bar{g}}  
+ f \partial_i( g^{ij}) \partial_j f + f g^{ij} \partial_{ij} f.$$
En \'ecrivant que $\nabla_i g^{-1} = 0$, on obtient 
 
$$\partial_i g^{ij} = -g^{mj} \Gamma^i_{im} - g^{im} \Gamma^j_{im}$$
ce qui donne 
$$\partial_i (f g^{ij} \partial_j f)=   |df|^2_{\bar{g}} - f( \partial_j f)
g^{mj} \Gamma^i_{im} - f (\partial_j f) g^{im} \Gamma^j_{im} + f g^{ij} \partial_{ij} f.$$
Rappelons que l'{\it op\'erateur de D'alembert} ou {\it d'alembertien} est
l'analogue riemannien du laplacien. Il est d\'efini en coordonn\'ees par 
$$\square_g f = - g^{ij} \left(\partial_{ij} f -\Gamma_{ij}^k \partial_k f\right).$$
Il est fr\'equent de conserver les notations riemanniennes en g\'eom\'etrie
lorentzienne (par exemple en ce concerne les courbures) mais pour le
d'alembertien, une notation diff\'erente indique la diff\'erence de nature
entre ces deux op\'erateurs : le laplacien est elliptique alors que le
d'alembertien est hyperbolique. \\

\noindent Reprenons notre calcul. Nous obtenons ainsi  
$$\partial_i (f g^{ij} \partial_j f)=   |df|^2_{\bar{g}} - f (\partial_j f)
g^{mj} \Gamma^i_{im} - f \square_g f.$$
En rempla\c{c}ant dans \eref{R00}, on a 
\begin{eqnarray} \label{R002} 
R_{00} = - f \square_g f - |df|^2_{\bar{g}}.
\end{eqnarray} 

\noindent On calcule maintenant  en utilisant que $g^{0i} = -f^2
\delta^{0i}$ et que $\Gamma_{00}^m = \frac{1}{f} \partial_m f$
\begin{eqnarray*} 
\begin{aligned} 
f \square_g f & = - g^{ij} \left(f \partial_{ij} f - f \Gamma_{ij}^k \partial_k f\right)\\
& = f \Delta_{\bar{g}}f + g^{00} \Gamma_{00}^m \partial_m f \\
& =  f
\Delta_{\bar{g}}f - |df|^2_{\bar{g}}.
\end{aligned}
\end{eqnarray*} 
En revenant \`a \eref{R002}, on obtient 
$$R_{00} = - f \Delta_{\bar{g}} f$$
ce qui termine la d\'emonstra--tion de la proposition \ref{ricci}. 
\end{proof}

\noindent De cette proposition, on d\'eduit puisque $f= 1+h$ avec $h=o(1)$,
que $\Delta_{\bar{g}} h = - Ric \left(\frac{\partial}{\partial t} , \frac{\partial}{\partial t}\right)(1+o(1))$. En revenant aux notations tensorielles, pour avoir la relation \eref{cequonveut}, il faut donc imposer 
$$R_{00} = 4 \pi \rho.$$
Rappelons que le tenseur d'\'energie-impulsion est d\'efini par 
$$\tau = \rho \overrightarrow{u} \otimes \overrightarrow{u}$$
et comme $c'=\overrightarrow{u}$, 
$$\tau_{00} = \rho (c_0')^2 = \rho (1+o(1))$$
en utilisant \eref{xo0}.
La relation \eref{cequonveut} est donc satisfaite si on suppose que 
$$Ric = 4 \pi \tau.$$

\begin{remark}
Il suffit que cette relation soit satisfaite sur la composante $00$ pour avoir \eref{cequonveut} mais pour avoir une \'equation intrins\`eque, on l'impose comme \'etant une \'egalit\'e tensorielle. 
\end{remark}

\noindent Malheureusement, la proposition \ref{tei_geod} implique que l'on doit avoir 
$$\nabla^i R_{ij} =0$$
ce qui n'a aucune raison d'\^etre vrai en g\'en\'eral. Par contre, on remarque que le tenseur dit {\it tenseur d'Einstein}
\begin{eqnarray} \label{eij}
E_{ij} := R_{ij} - \frac{1}{2} R g_{ij}
\end{eqnarray}
(o\`u $R:= g^{kl} R_{kl}$ est la courbure scalaire) v\'erifie cette condition (i.e. $\nabla^i E_{ij} =0$). Cette relation se d\'eduit de l'identit\'e de Bianchi sur la courbure de Riemann. D'o\`u l'id\'ee de poser 
\begin{eqnarray} \label{einstein1}
Ric -\frac{1}{2} R g = 8 \pi \tau.
\end{eqnarray}
Cette relation conserve la relation \eref{cequonveut}. Pour le voir, contractons chaque c\^ot\'e de l'\'egalit\'e par $g^{ij}$. Comme $g^{ij}g_{ij} =4$ et comme $\tau_{ij} = \rho c_i' c_j'$ (car $\overrightarrow{u} = c'$), on obtient $R-2R= 8 \pi \rho c_i' c_j' g^{ij}$. Mais d'apr\`es \eref{vo1}, $c_i' = o(1)$ si $i \not= 0$ et on a d\'ej\`a vu que $c_0'=1+o(1)$ (d'apr\`es \eref{xo0}). On obtient donc 
$-R = - 8\pi \rho(1+o(1))$ puisque $g^{00} = -1 + o(1)$. On en d\'eduit en utilisant l'\'equation \eref{einstein1} que

\begin{eqnarray*} 
\begin{aligned}
\Delta_{\bar{g}} h&  = - (1+o(1)) R_{00} = - (1+o(1))( \frac{1}{2} R g_{00} + 8 \pi \tau_{00}) \\
& = -(1+o(1)) (- 4 \pi \rho +8\pi \rho) \\
& = - (1+o(1))  4 \pi \rho 
\end{aligned}
\end{eqnarray*}
et on retrouve \eref{cequonveut}. \\

\noindent 
Si on ajoute \`a $E_{ij}$ un terme de la forme $\Lambda g$ (o\`u $\Lambda$
est un r\'eel), on garde la relation \eref{eij}. Par contre, on perd la
relation \eref{cequonveut}.  Prendre $\Lambda =0$ conduit \`a consid\'erer
que l'univers est en expansion (voir le chapitre suivant)  
ce que refusait compl\`etement Einstein. 
C'est pourquoi il a ajout\'e ce terme. L'\'equation garde sa coh\'erence (i.e. la relation \eref{eij}). On peut m\^eme montrer que c'est le seul terme que l'on peut ajouter pour garder la coh\'erence de l'\'equation. Si l'on suppose que $\Lambda$ est petit, on est proche des lois de Newton.  
C'est cette \'equation intrins\`eque (on n'a pas besoin d'avoir de domaine statique pour la consid\'erer) que l'on gardera. \\ 
 
\noindent Autrement dit, on est maintenant en mesure d'\'enoncer les deux axiomes qui r\'egissent la mati\`ere lorsqu'on consid\`ere qu'il n'y a qu'un fluide parfait pression dans l'espace-temps : \\

\noindent \underline{Axiome 1 :} Les courbes du fluide param\'etr\'ees par leur temps propre sont des g\'eod\'esiques de $(\M,g)$.\\ 

\noindent \underline{Axiome 2 :} La mati\`ere et la m\'etrique sont li\'ees par l'\'equation d'Einstein 
\begin{eqnarray} \label{einstein}
Ric -\frac{1}{2} R g = 8 \pi \tau - \Lambda g
\end{eqnarray} 
o\`u $\Lambda$ est une constante, appel\'ee {\em constante cosmologique}, que l'on peut choisir.\\

\begin{remark}
Si la mati\`ere n'est pas mod\'elis\'ee par un fluide parfait, on conserve tout de m\^eme ces deux axiomes sous cette forme mais c'est le tenseur d'\'energie-impulsion qui prendra une autre forme. 
\end{remark}
\noindent Consid\'erons seulement l'axiome $2$. La relation \eref{eij} implique que $\nabla^i \tau_{ij} = 0$  et la proposition \ref{tei_geod} implique alors l'axiome $1$. {\bf On consid\`ere donc que le comportement de la mati\`ere est r\'egi par l'axiome $2$ seulement}. L'axiome $1$ est alors automatiquement vrai. \\

\noindent {\bf \`A partir de maintenant et dans tous les chapitres qui suivent, nous nous pla\c{c}ons toujours
  dans l'espace-temps de la relativit\'e g\'en\'erale et nous supposerons
  que le comportement de la mati\`ere est r\'egi par l'axiome 2 seulement. } \\

\begin{remark}
Vers la fin de sa vie, Einstein a admis qu'il avait fait une erreur en
refusant d'admettre que  $\Lambda=0$. En tout cas, si la constante
$\Lambda$ n'est pas nulle, elle doit \^etre tr\`es petite. En effet, ce
terme additionnel d\'etruit  la
relation \eref{cequonveut}. Par cons\'equent, on ne retrouve plus les lois
de la m\'ecanique classique \`a petite \'echelle.  Les 
mesures physiques r\'ecentes
tendent \`a montrer que $\Lambda>0$ est non nulle, petite, mais pas aussi
petite que ce que l'on
pensait. \\
\end{remark}

\begin{remark}
La constante cosmologique fournit \'egalement une explication possible \`a l'{\'energie noire}. 
\end{remark} 

\begin{remark}
Pour \'etablir l'\'equation d'Einstein, on a travaill\'e avec la courbure de Ricci mais on aurait pu penser \`a poser $R= - 4 \pi \rho$ ce qui est a priori suffisant pour avoir la relation \eref{cequonveut}. Physiquement, cela n'aurait pas pu mod\'eliser correctement la r\'ealit\'e car cette relation faisait intervenir seulement la densit\'e de masse et pas les courbes du fluide. On aurait aussi pu penser \`a utiliser le tenseur de Riemann mais les \'equations auraient \'et\'e beaucoup plus compliqu\'ees et les observations physiques montrent que le choix de l'axiome $2$ ci-dessus est un mod\`ele tr\`es proche de la r\'ealit\'e. 
\end{remark}

\providecommand{\bysame}{\leavevmode\hbox to3em{\hrulefill}\thinspace}

%%%%%%%%%%%%%%%%%%%%%%%%%%%%%%%%%%%%%%%%%%%%%%%%%%%%%%%%%%%%{%%%%     
\end{document}